\title{Nonlinear stability of mKdV breathers}
\author{Miguel A. Alejo}
\author{Claudio Mu\~noz}
\address{Department of Mathematical Sciences, University of Copenhagen,  Denmark}
\email{miguel.alejo@math.ku.dk}
\address{Department of Mathematics, The University of Chicago, Chicago IL, USA}
\email{cmunoz@math.uchicago.edu}
\date{February, 2012}
\subjclass[2000]{Primary 35Q51, 35Q53; Secondary 37K10, 37K40}
\keywords{modified KdV equation, integrability, breather, stability}
\thanks{}
\chardef\bslash=`\\ 
\newtheorem{thm}{Theorem}[section]
\newtheorem{cor}[thm]{Corollary}
\newtheorem{lem}[thm]{Lemma}
\newtheorem{prop}[thm]{Proposition}
\newtheorem{defn}[thm]{Definition}
\theoremstyle{remark}
\newtheorem{rem}{Remark}[section]
\numberwithin{equation}{section}
\newcommand{\R}{\mathbb{R}}
\newcommand{\Z}{\mathbb{Z}}
\newcommand{\la}{\lambda}
\newcommand{\al}{\alpha}
\newcommand{\bt}{\beta}
\newcommand{\ga}{\gamma}
\newcommand{\spawn}{\operatorname{span}}
\newcommand{\sech}{\operatorname{sech}}
\def\bm{\left( \begin{array}{cc}}
\def\endm{\end{array}\right)}
 \providecommand{\abs}[1]{\lvert#1 \rvert}
\newcommand{\be}{\begin{equation}}
\newcommand{\ee}{\end{equation}}
\newcommand{\ba}{\begin{equation*}}
\newcommand{\ea}{\begin{equation*}}
\newcommand{\bea}{\begin{eqnarray}}
\newcommand{\eea}{\end{eqnarray}}
\newcommand{\bee}{\begin{eqnarray*}}
\newcommand{\eee}{\end{eqnarray*}}
\newcommand{\ben}{\begin{enumerate}}
\newcommand{\een}{\end{enumerate}}
\newcommand{\nonu}{\nonumber}
\newcommand{\eval}[2][\right]{\relax
  \ifx#1\right\relax \left.\fi#2#1\rvert}
\let\abs=\envert
\begin{document}
\begin{abstract}
Breather solutions of the modified Korteweg-de Vries equation are shown to be globally stable in a \emph{natural} $H^2$ topology. Our proof introduces a new Lyapunov functional, at the $H^2$ level, which allows to describe the dynamics of small perturbations, including oscillations induced by the periodicity of the solution, as well as a direct control of the corresponding instability modes. In particular, degenerate directions are controlled using low-regularity conservation laws. 
\end{abstract}
\maketitle \markboth{Stability of breathers} {Miguel A. Alejo and Claudio Mu\~noz}
\renewcommand{\sectionmark}[1]{}

\section{Introduction}

\medskip

This paper deals with the nonlinear stability of \emph{breathers} of the focusing, modified Korteweg-de Vries (mKdV) equation
\be\label{mKdV}
u_{t}  +  (u_{xx} + u^3)_x =0.
\ee
Here $u=u(t,x)$ is a real-valued function, and $(t,x)\in \R^2$.  The equation above is a well known \emph{completely integrable} model \cite{Ga,AC,La}, with infinitely many conserved quantities, and a suitable Lax-pair formulation. The Inverse Scattering Theory has been applied by many authors in order to describe the behavior of solutions in generality, see e.g. \cite{AC,La} and references therein.    

\medskip

Solutions $u(t,x)$ of (\ref{mKdV}) are invariant under space and time translations, and under suitable scaling properties. Indeed, for any $t_0, x_0\in \R$, and $c>0$, both $u(t-t_0, x-x_0)$ and $c^{1/2} u(c^{3/2} t, c^{1/2} x)$ are solutions of (\ref{mKdV}).  Finally, if $u(t,x)$ is a solution of (\ref{mKdV}), then $u(-t,-x)$ and $-u(t,x)$ are also solutions. 

\medskip

On the other hand, standard conservation laws for (\ref{mKdV}) at the $H^1$-level are the \emph{mass}
\be\label{M1}
M[u](t)  :=  \frac 12 \int_\R u^2(t,x)dx = M[u](0),
\ee 
and \emph{energy} 
\be\label{E1}
E[u](t)  :=  \frac 12 \int_\R u_x^2(t,x)dx -\frac 14 \int_\R u^4(t,x)dx = E[u](0).
\ee 
A satisfactory Cauchy theory is also present at such a level of regularity or even lower, see e.g. Kenig-Ponce-Vega \cite{KPV}, and Colliander {\it et al.} \cite{CKSTT}. From the Inverse Scattering Theory, the evolution of a rapidly decaying initial data can be described by purely algebraic methods. Solutions are shown to decompose into a very particular set of solutions (see Schuur \cite{Sch}),  described in detail below.

\medskip

Indeed, equation (\ref{mKdV}) is also important because of the existence of solitary wave solutions called \emph{solitons}. These profiles are often regarded as minimizers of a constrained functional in the $H^1$-topology. For example, mKdV (\ref{mKdV}) has solitons of the form
\be\label{Sol}
u(t,x) = Q_c (x-ct), \quad Q_c(s) := \sqrt{c} Q(\sqrt{c} s), \quad c>0,
\ee
with
\[
Q (s):= \frac{\sqrt{2}}{\cosh (s)} =2\sqrt{2} \partial_s[\arctan(e^{s})].
\]
By replacing (\ref{Sol}) in (\ref{mKdV}), one has that $Q_c>0$ satisfies the nonlinear ODE
\be\label{ecQc}
Q_c'' -c\, Q_c +Q_c^3=0, \quad Q_c\in H^1(\R).
\ee
Moreover, as a consequence of the integrability property, these nonlinear modes interact elastically during the dynamics, and no dispersive effects are present at infinity. In particular, even more complex solutions are present, such as \emph{multi-solitons} (explicit solutions describing the interaction of several solitons \cite{HIROTA1}).  For example, the 2-soliton solution of (\ref{mKdV}) is given by the four-parameter family 
\[
U_2  :=  U_2(t,x; c_1,c_2, x_1,x_2)     =   2\sqrt{2} \partial_x \Big[ \arctan \Big(\frac{ e^{s_1} + e^{s_2}  }{1-\rho^2 e^{s_1+s_2}}\Big)\Big],
\]
with $s_1 :=  \sqrt{c_1}(x-c_1t)+x_1$, $s_2:= \sqrt{c_2}(x-c_2t)+x_2$, and $\rho:=\frac{\sqrt{c_1}-\sqrt{c_2}}{\sqrt{c_1}+\sqrt{c_2}}$. Here $c_1,c_2>0$, $c_1\neq c_2$, are the associated scalings, and $x_1,x_2\in \R$ are the corresponding shift parameters. In particular, $U_2$ satisfies
\[
\lim_{t\to \pm \infty} \| U_2(t) - Q_{c_1}(\cdot -c_1 t -x_1^\pm) -Q_{c_2}(\cdot -c_2 t -x_2^\pm)  \|_{H^1(\R)} =0,
\]
for some given $x_j^\pm \in \R$, depending on $(c_1,c_2)$.

\medskip

The study of perturbations of solitons and multi-solitons of (\ref{mKdV}) and more general equations leads to the introduction of the concepts of \emph{orbital}, and \emph{asymptotic stability}. In particular, since energy and mass are conserved quantities, it is natural to expect that solitons are stable in a suitable energy space. Indeed, $H^1$-stability of mKdV and more general solitons and multi-solitons has been considered e.g. in Benjamin \cite{Benj}, Bona-Souganidis-Strauss \cite{BSS}, Weinstein \cite{We2}, Maddocks-Sachs \cite{MS}, Martel-Merle-Tsai \cite{MMT} and Martel-Merle \cite{MMcol2,MMan}. $L^2$-stability of KdV solitons and multi-solitons has been proved in Merle-Vega \cite{MV} and Alejo-Mu\~noz-Vega \cite{AMV}. On the other hand, asymptotic stability properties have been studied by Pego-Weinstein \cite{PW} and Martel-Merle \cite{MMarma,MMnon,MMan}. 

\medskip

One of the main ingredients of the stability argument employed in some of the previous works is the introduction of a suitable \emph{Lyapunov functional}, \emph{invariant or almost invariant in time} and such that the soliton is a corresponding \emph{extremal point}. 
For the mKdV case, this functional is given by
\be\label{H0}
H[u](t) = E[u](t) + c \, M[u](t),
\ee
where $c>0$ is the scaling of the solitary wave, and $E[u]$, $M[u]$ are given in (\ref{M1})-(\ref{E1}). A simple computation shows that for any $z(t)\in H^1(\R)$ small,
\be\label{Expa1}
H[Q_c+z](t) = H[Q_c] + \int_\R z(Q_c''-cQ_c +Q_c^3) + \mathcal Q (t)  + O(\|z(t)\|_{H^1(\R)}^3).
\ee
The first term above is independent of time, while the second one is zero from (\ref{ecQc}). It turns out that the third term $\mathcal Q(t)$ is positive definite modulo two directions, related to the invariance of the equation under shift and scaling transformations (see the second paragraph above). Modulation parameters are then introduced in order to remove those instability modes. Once these directions are controlled, the stability property follows from (\ref{Expa1}).

\medskip

In addition to the special solutions mentioned above, there exists another nonlinear mode, of oscillatory character, known in the physical and mathematical literature as the \emph{breather} solution, and which is a periodic in time, spatially localized real-valued function. Indeed, the following definition is standard (see \cite{W1,La} and references therein):

\begin{defn}\label{mKdVB} 
Let $\al, \bt \in \R\backslash\{0\}$. The breather solution of mKdV \eqref{mKdV} is explicitly given by 
\be\label{breather}
\begin{split}
 B_{\al, \bt}(t,x)   := & 2\sqrt{2} \partial_x \Big[ \arctan \Big( \frac{\bt}{\al}\frac{\sin(\al (x+\delta t))}{\cosh(\bt (x+\ga t))}\Big) \Big]  \\
  = & 2 \sqrt{2} \bt \sech (\bt (x+\ga t)) \Big[Ê\frac{\cos (\al (x+\delta t)) - (\bt /\al) \sin (\al (x+\delta t)) \tanh (\bt (x+\ga t))  }{1 +(\bt/\al)^2 \sin^2 (\al (x+\delta t))  \sech^2 (\bt (x+\ga t)) } \Big] ,
\end{split}
\ee
with 
\be\label{deltagamma}
\delta := \al^2 -3\bt^2, \quad  \ga := 3\al^2 -\bt^2.
\ee
\end{defn}
Note that breathers are periodic in time, but not in space, and this will be essential in our proof. A simple but very important remark is that $\delta \neq \ga$, for all values of $\al$ and $\beta$ different from zero. This means that variables 
$x+\delta t$ and $x+\ga t$ are always independent. Indeed, if $\delta = \ga$, one has from (\ref{deltagamma})
$2(\al^2 +\bt^2) =0,$  which means $\al=\bt=0$, a contradiction.

\medskip

Additionally, note that for each fixed time, the mKdV breather is a function in the Schwartz class, exponentially decreasing in space, with zero mean:
\[
\int_\R B_{\al,\bt} =0.
\]
Moreover, from the scaling invariance, one has $ c^{1/2}B_{\al,\bt}(c^{3/2} t, c^{1/2}x) = B_{c^{1/2}\al, c^{1/2}\bt}(t,x) ,$   for all $c>0$,
and $B_{-\al, \beta} = B_{\al,\beta}$, $B_{\al, -\beta} = -B_{\al, \beta}.$ Therefore, we can assume $\al, \beta>0$, with no loss of generality. Finally, we will denote $\bt$ and $\al$ as the \emph{first} and \emph{second} scaling parameters, and $-\ga$ will be for us the \emph{velocity} of the breather solution.

\medskip

For the sake of completeness, we briefly comment the two limits $\bt/\al\ll1 $ and $\al=0$ in (\ref{breather}). The first one allows to simplify the expression for the breather to
\[
 B_{\al, \bt}(t,x)   \approx 2\sqrt{2}\bt \cos(\al (x+\delta t))\sech (\bt (x+\ga t)) + O\Big(\frac{\bt}{\al}\Big),
\]
and from a qualitative point of view, it shows explicitly its wave packet nature, as an oscillation modulated by an exponentially decaying function (see e.g. Fig. \ref{BreatherAlfas}). The second case is obtained by formally taking the limit $\al \to 0$ in \eqref{breather},
\be\label{polodoble} 
B_{0, \bt}(t,x)  :=  2\sqrt{2} \partial_x \Big[ \arctan \Big( \frac{\bt (x-3\bt^2 t)}{\cosh(\bt (x-\bt^2 t))}\Big) \Big]. 
\ee
This is the well known {\it double pole} solution of mKdV (see e.g. \cite{OW}), which represents  a  soliton-antisoliton pair traveling in the same direction and splitting up at \emph{logarithmic} rate.
\begin{figure}[!htb]
\centering
\includegraphics[width=14.5cm,height=8cm]{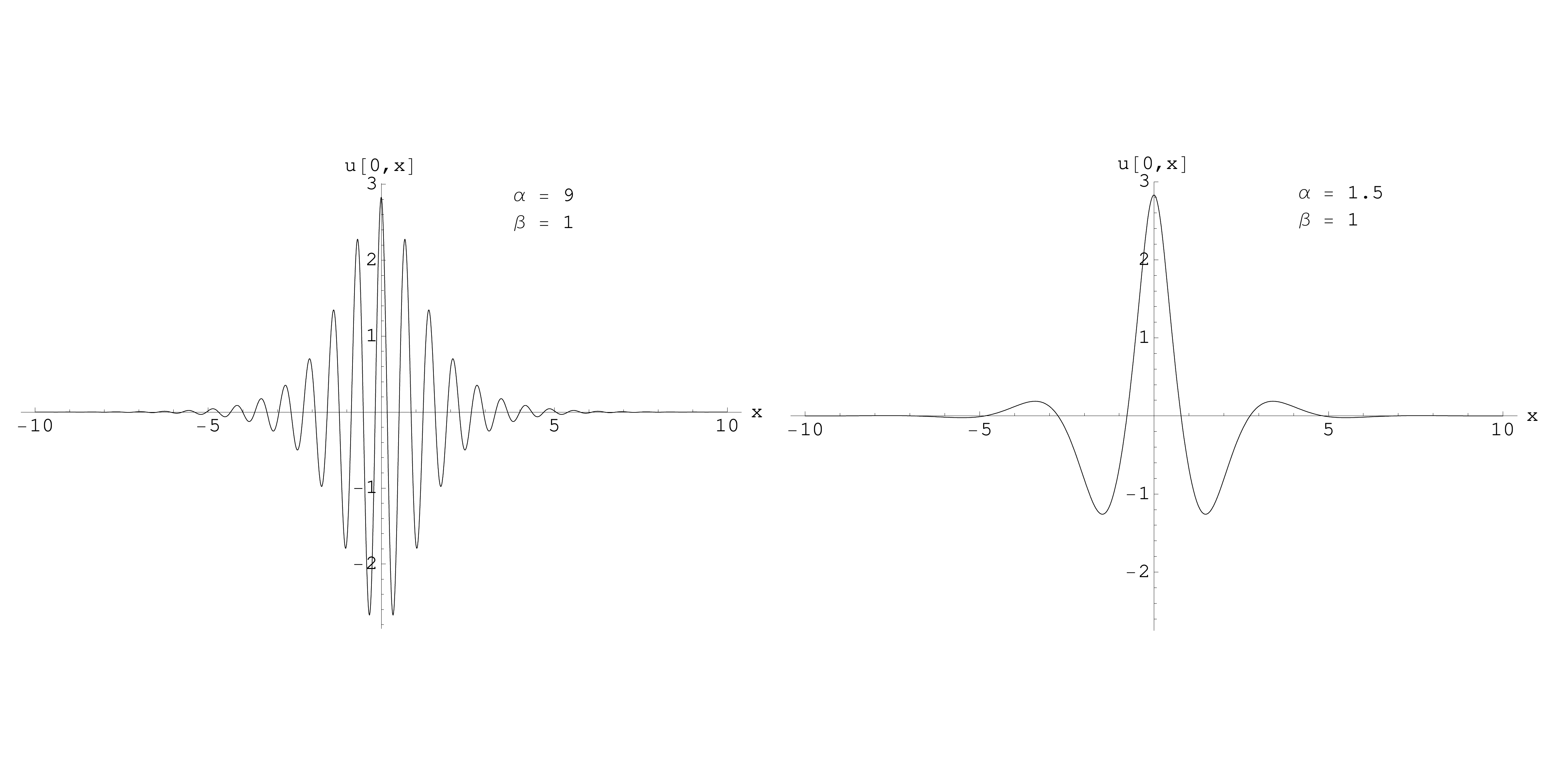}
\small{\caption{Left: mKdV breather \eqref{breather} with $\alpha=9,\beta=1$ at $t=0$.~
Right: mKdV breather \eqref{breather} with $\alpha=1.5,\beta=1$ at $t=0$.\label{BreatherAlfas}}}
\end{figure}

Note that from the invariance under space and time translations, given any $t_0, x_0\in \R$, the function $B_{\al, \bt}(t-t_0,x-x_0)$ is also a breather solution. This fact allows to define a four-parameter family of solutions
\be\label{BB}
B_{\al, \bt} (t , x; x_1, x_2 )  :=  B_{\al, \bt} (t -t_0 , x -x_0)   = 2\sqrt{2} \partial_x \Big[ \arctan \Big( \frac{\bt}{\al}\frac{\sin(\al y_1 )}{\cosh(\bt y_2)}\Big) \Big] ,
\ee
with $y_1 := x+\delta t + x_1$, $y_2:=x+\ga t + x_2$,
\be\label{t0x0}
t_0 := \frac{x_1-x_2}{2(\al^2+\bt^2)}, \quad \hbox{ and } \quad x_0 :=  \frac{\delta x_2-\ga x_1}{2(\al^2 + \bt^2)}.
\ee
Note that from this formula one has, for any $k\in \Z$,
\be\label{dege}
 B_{\al,\bt} (t,x; x_1 + \frac{k\pi}{\al}, x_2)  =  (-1)^k B_{\al,\bt} (t,x; x_1, x_2),
\ee
which are also solutions of (\ref{mKdV}). These identities reveal the periodic character of the first translation parameter. 

\medskip

In the same way, from (\ref{polodoble}) one can define a three-parameter family of double pole solutions $B_{0,\bt}(t; x_1, x_2)$, with $x_1, x_2\in \R$.
\phantom{a}\\
\begin{figure}[!htb]
\centering
\includegraphics[width=14.0cm,height=8.5cm]{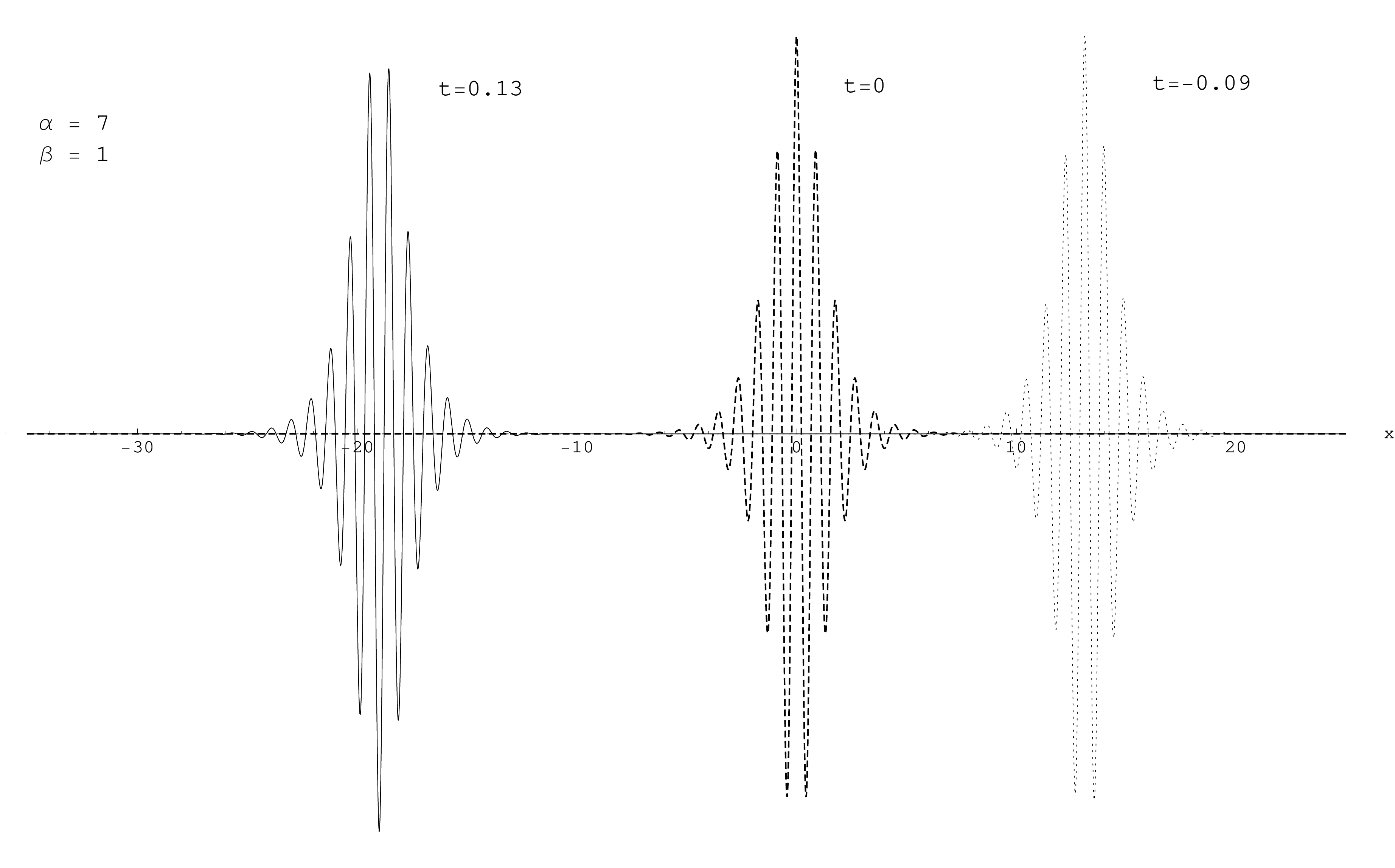}
\small{\caption{ Evolution of the mKdV breather  \eqref{breather} with $\alpha=7,\beta=1$ at instants $t=-0.09$, $t=0$, and $t=0.13$. Note that with the selected values of $\al,\bt$, the \emph{velocity} is given by $\ga=3\al^2 -\bt^2=146>0$ and then the breather moves to the left.\label{EvolBreather}}}
\end{figure}
\phantom{a}\\

Let us come back to breather solutions. We claim that they can be formally associated to the well known mKdV 2-solitons. Indeed, they have a four-parameter family of symmetries: two scaling  and two translations invariances (note that the equation that we consider is just one dimensional in space). However, unlike 2-soliton solutions, breathers have to be considered as \emph{fully bounded states}, since they do not decouple into simple solitons as time evolves. Another intriguing fact is that, as far as we know, breathers are only present in some very particular integrable models, such as mKdV, NLS and sine Gordon equations, among others.

\medskip

Let us recall now some relevant physical and mathematical literature. From the physical point of view, breather solutions are relevant to localization-type phenomena in optics, condensed matter physics and biophysics \cite{Au}. In a geometrical setting, breathers also appear in the evolution of closed planar curves playing the role of  smooth localized deformations traveling along the closed curve \cite{Ale}. Moreover, it is interesting to stress that breather solutions have also been considered by Kenig, Ponce and Vega in their proof of the non-uniform continuity of the mKdV flow in the Sobolev spaces $H^s$, $s<\frac 14$ \cite{KPV2}. On the other hand, they should be essential to completely understand the associated \emph{soliton-resolution} conjecture for the mKdV equation, according to the analysis developed by Schuur in \cite{Sch}. An essential problem in that direction is to show whether or not breather solutions may appear from general initial data, and for this reason to study their stability is the fundamental question. Numerical computations (see Gorria-Alejo-Vega \cite{AGV}) show that breathers are \emph{numerically} stable. However, the simple question of a rigorous proof of orbital stability has become a long standing open problem.

\medskip

In this paper, we give a first, positive answer to the question of breathers stability. Our main result is the following

\begin{thm}\label{MT} mKdV breathers are orbitally stable in their natural $H^2$-topology. 
\end{thm}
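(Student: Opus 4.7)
The plan is to build an $H^2$-level Lyapunov functional for which the breather $B=B_{\alpha,\beta}$ is a true critical point, expand it around $B$, and then control the four symmetry directions by a combination of modulation and the lower-order conservation laws $M$ and $E$.

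The first task is to identify a nonlinear fourth-order elliptic equation satisfied by $B$ at every fixed time. Because mKdV is integrable, one has a conserved quantity $F[u]$ at the $H^2$ level, of the schematic form
\[
F[u] \;=\; \frac12\int u_{xx}^{\,2}\,dx \;-\; \frac52\int u^{2} u_{x}^{\,2}\,dx \;+\; \frac14\int u^{6}\,dx .
\]
The idea is to look for real constants $a_1,a_2$ (depending only on $\alpha,\beta$) such that
\[
\mathcal{H}[u] \;:=\; F[u] \;+\; a_2\, E[u] \;+\; a_1\, M[u]
\]
has $B_{\alpha,\beta}(t,\cdot)$ as a critical point for all $t$. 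Computing $\mathcal{H}'[B]=0$ using the explicit formula (\ref{breather}) and the fact that the two phases $y_1=x+\delta t$ and $y_2=x+\gamma t$ are algebraically independent (since $\delta\neq\gamma$), one expects the choice
\[
a_2 \;=\; 2(\beta^{2}-\alpha^{2}) , \qquad a_1 \;=\; (\alpha^{2}+\beta^{2})^{2},
\]
yielding an identity of the form $B_{xxxx}-a_2 B_{xx}+a_1 B + (\text{cubic in }B, B_x)=0$. Verifying this identity is essentially a symbolic computation, but it is the cornerstone of everything else.

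Once $B$ is a critical point, write $u=B+z$ with $z\in H^{2}$ small, and expand
\[
\mathcal{H}[B+z] \;=\; \mathcal{H}[B] \;+\; \frac12\,\langle \mathcal{L}\, z, z\rangle \;+\; N(z),
\]
where $\mathcal{L}:=\mathcal{H}''[B]$ is a self-adjoint operator of order four on $L^2(\R)$, and $N(z)=O(\|z\|_{H^{2}}^{3})$. Since both $M$ and $E$ are conserved and $F$ is conserved, $\mathcal{H}$ is conserved along the mKdV flow. The core step is then a spectral/coercivity statement: there exist $\mu>0$ and a finite-dimensional subspace $\mathcal{K}\subset H^2$ of dimension four, spanned by the infinitesimal generators
\[
\partial_{x_1}B,\quad \partial_{x_2}B,\quad \partial_{\alpha}B,\quad \partial_{\beta}B,
\]
such that for every $z\in H^{2}$ with $z\perp \mathcal{K}$ in a suitable pairing,
\[
\langle \mathcal{L}\, z, z\rangle \;\geq\; \mu\,\|z\|_{H^{2}(\R)}^{2}.
\]
The four directions above come from the translation and scaling symmetries and must lie in the kernel or the finite-dimensional degenerate subspace of $\mathcal{L}$.

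To use this coercivity, introduce four time-dependent modulation parameters $\bigl(x_1(t),x_2(t),\alpha(t),\beta(t)\bigr)$ and decompose $u(t)=B_{\alpha(t),\beta(t)}(t,\cdot;x_1(t),x_2(t))+z(t)$, choosing them by the implicit function theorem so that $z(t)$ is orthogonal to $\mathcal{K}$. Standard modulation calculus gives ODEs for the parameters with right-hand sides $O(\|z\|_{H^{2}})$, and by the coercivity and conservation of $\mathcal{H}$ one gets $\|z(t)\|_{H^{2}}\lesssim \|z(0)\|_{H^{2}}$ as long as the modulation is well defined. The translation parameters $x_1,x_2$ can drift, but only linearly with the perturbation size; this is the expected content of orbital stability. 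The genuinely delicate directions are the scaling ones: a priori, the differences $\alpha(t)-\alpha(0)$ and $\beta(t)-\beta(0)$ could accumulate secularly. This is where the lower-order conservation laws enter: since $M$ and $E$ are conserved, expanding them around $B_{\alpha(t),\beta(t)}$ and solving for $(\alpha(t),\beta(t))$ in terms of $(M,E)$ and a quadratic-in-$z$ remainder forces $|\alpha(t)-\alpha(0)|+|\beta(t)-\beta(0)|\lesssim \|z(0)\|_{H^{2}}^{2/?}$, closing the bootstrap.

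The hard part is without doubt the coercivity of $\mathcal{L}$ modulo the four explicit directions. Because $B$ is time-periodic, $\mathcal{L}$ has time-dependent coefficients and oscillatory structure, and one cannot appeal to the classical Sturm–Liouville theory used for $H^{1}$-stability of solitons. One possible route is to conjugate $\mathcal{L}$ to a factorized form using the two commuting flows that generate the breather (this is where integrability pays off), reducing the spectral problem to an explicit one on $L^{2}(\R)$ whose negative/degenerate eigenspaces are precisely the generators of the symmetries, and nothing else. A continuity or deformation argument in $(\alpha,\beta)$, together with control of the oscillatory terms by taking time averages over the breather period, should then close the coercivity estimate uniformly in $t$.
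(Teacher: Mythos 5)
You have correctly identified the Lyapunov functional, including the exact constants $a_2=2(\beta^2-\alpha^2)$ and $a_1=(\alpha^2+\beta^2)^2$, and the critical-point identity $G[B]=0$, which is indeed the cornerstone of the argument. However, the heart of the matter --- the coercivity of $\mathcal L=\mathcal H''[B]$ --- is both left unproven and described incorrectly. Your claim that the four symmetry generators $\partial_{x_1}B,\partial_{x_2}B,\Lambda_\alpha B,\Lambda_\beta B$ ``must lie in the kernel or the finite-dimensional degenerate subspace of $\mathcal L$'' is false: only the two translation directions are in the kernel, while the scaling directions satisfy $\int\Lambda_\alpha B\,\mathcal L[\Lambda_\alpha B]=32\alpha^2\beta>0$ and $\int\Lambda_\beta B\,\mathcal L[\Lambda_\beta B]=-32\alpha^2\beta<0$; that is, $\Lambda_\alpha B$ is (surprisingly) a \emph{positive} direction and $\Lambda_\beta B$ a negative one. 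Consequently, orthogonality to your four directions does not by itself yield coercivity: one must first prove that $\mathcal L$ has \emph{exactly one} negative eigenvalue (counted with multiplicity) and that the chosen orthogonality conditions see it. This count is the genuinely hard step; it is obtained via a Greenberg-type oscillation criterion, which requires computing the zeros of the Wronskian $\det W[B_1,B_2]$ of the two explicit kernel elements in closed form. Your proposed routes (``conjugate $\mathcal L$ to a factorized form using the two commuting flows,'' ``time averages over the breather period'') are not substitutes for this computation, and nothing in the sketch pins down the number of negative directions.

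There is a second structural problem with your modulation scheme. Since $\mathcal H$ itself depends on $(\alpha,\beta)$ through the coefficients $a_1,a_2$, making $\alpha(t),\beta(t)$ time-dependent destroys the conservation of $\mathcal H$: one would have to track $\frac{d}{dt}\mathcal H_{\alpha(t),\beta(t)}[u]$, which produces terms proportional to $\alpha'(t),\beta'(t)$ that your bootstrap does not control. It also makes the spectral constants ($\lambda_0$, $\nu_0$) depend on moving parameters, so uniformity in $t$ becomes nontrivial. The cleaner route is to modulate only in $(x_1,x_2)$, keep $(\alpha,\beta)$ frozen, and handle the single negative direction by the Weinstein substitution: one exhibits $B_0$ with $\mathcal L[B_0]=-B$ and $\int B_0B>0$, obtains coercivity up to a penalization $-\frac1{\mu_0}\bigl(\int zB\bigr)^2$ under only the two translation orthogonality conditions, and then controls $\int B(t)z(t)$ by the conservation of mass alone. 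Your observation that $(M,E)$ determine $(\alpha,\beta)$ locally (the Jacobian is $-48\alpha\beta|E[Q]|\neq0$) is correct and could in principle be used to close a four-parameter scheme, but as written the proposal neither proves the coercivity it relies on nor accounts for the loss of conservation of $\mathcal H$ that its own modulation introduces.
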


A more detailed version of this result is given in Theorem \ref{T1}. As we will see from  the proofs, the space $H^2$ is required by a regularity argument and by the very important fact that breathers are \emph{bound states}, which means that there is no mass decoupling as time evolves. However, our argument is general and can be applied to several equations with breather solutions, and moreover, it introduces several new ideas in order to attack the stability problem in the energy space. In addition, our proof corroborates, at the rigorous level, some deep connections between breathers and the 2-solitons of mKdV.

\medskip

Let us explain the main steps of the proof. First, we prove that breathers satisfy a fourth-order, nonlinear ODE (equation (\ref{EcB})). The proof of this identity is involved, and requires the explicit form of the breather, and several new identities related to the soliton structure of the breather. It seems that this equation cannot be obtained from the original arguments by Lax \cite{LAX1}, since the dynamics do not decouple in time. Our second and more important ingredient is the introduction of a new Lyapunov functional (see \eqref{H1}), well-defined in the $H^2$ topology, and for which breathers are surprisingly not only \emph{extremal points}, but also \emph{local minimizers}, up to  symmetries. This functional also allows to control the perturbative terms and the instability directions that appear during of the dynamics, the latter as consequences of the symmetries described by (\ref{breather}). From the proofs, we will see that breathers have essentially \emph{three directions of instability}, two associated to translation invariances, and a third one consequence of the particular first scaling parameter $\beta$. In order to prove that there is just one negative eigenvalue, we make use of a direct generalization of the theory developed by L. Greenberg \cite{Gr}, which deals with fourth order eigenvalue problems. We then modulate in time in order to remove the spatial instabilities. This is an absolutely necessary condition in order to obtain an orbital stability property. However, we do not modulate the scaling instabilities. Instead, we control the dynamics first replacing the corresponding negative mode by a more tractable direction, the breather itself, and using the mass conservation law. This technique was first introduced by Weinstein in \cite{We1}. A very surprising fact is that the so-called \emph{second scaling} parameter, associated to oscillations, is actually a positive direction when enough regularity is on hand, and even if it has an $L^2$-critical character. 

\medskip

Our functional is reminiscent of that appearing in the  foundational paper by Lax \cite{LAX1}, concerning the 2-soliton solution of the KdV equation,
\[
u_{t}  +  (u_{xx} + u^2)_x =0,
\]
and generalized to the KdV $N$-soliton states by Maddock-Sachs \cite{MS}. This idea has been successfully applied to several 2-soliton problems, for which the dynamics decouples into well-separated solitons as time evolves, see e.g. Holmer-Perelman-Zworski \cite{HPZ}, Kapitula \cite{Kap}, and Lopes-Neves \cite{NL}, for the Benjamin-Ono equation. However, there was no evidence that this technique could be generalized to the case of even more complex solutions, such as breathers. Compared with those results, our proofs are more involved, and computations are sometimes a nightmare. We have preferred to split the proof of the main theorem into several simple steps. 

\medskip

We believe that our result can be improved to reach the $H^1$ level of regularity, but with a harder proof. It seems clear that a better understanding of the $H^1$ dynamics requires a detailed study of modulations on the scaling parameters. In particular, the Martel-Merle-Tsai technique \cite{MMT} seems to fail in this case due to the absence of a clearly decoupled mass dynamics. One can also consider a suitable asymptotic stability property, in the spirit of \cite{MMarma}. However, note that the Martel-Merle \cite{MMarma,MMnon,MMan} results are difficult to generalize to the current case of study since breathers may have negative velocity, and therefore they can interact with the linear part of the dynamics. We conjecture that breathers are asymptotically stable in the case of positive velocities.

\begin{rem}
The methods employed in the proof of Theorem \ref{MT} seem do not apply in the limit $\al\to 0$, which is expected to be unstable, according to the numerical computations performed by Gorria-Alejo-Vega \cite{AGV}. 
\end{rem}

\begin{rem}
The natural complement of our study is to consider the sine Gordon equation
\[
u_{tt}  -u_{xx} + \sin u=0, \quad u(t,x)\in \R.
\]
Since this integrable equation has also breather solutions (see e.g. Lamb \cite{La}), we expect similar results, but with more involved proofs at the level of the linearized problem (we deal with matrix operators).  Indeed, following the present proof, we can guess that sine-Gordon breathers are $H^2\times H^1$ stable provided Lemma \ref{B1B2} and Proposition \ref{WW} hold for the associated spectral elements. Additionally, the focusing Gardner equation 
\[
u_{t} +(u_{xx} +u^2+\mu u^3)_x=0, \quad \mu >0,
\]
is the natural generalization of (\ref{mKdV}). In particular, it has a family of breathers indexed by the additional parameter $\mu$ (see \cite{PeGr, Ale1}). We expect to consider some of these problems in a forthcoming publication (see \cite{AM}).
\end{rem}

In a more qualitative aspect, we think that our results are in some sense a surprise, because any nontrivial perturbation of an integrable equation with breathers solutions should destroy the existence property. Several results in that direction can be found e.g. in \cite{BMW,D,SW} and references therein (for the sine Gordon case). Those results and the present paper suggest that stability is deeply related to the integrability of the equation, unlike the standard gKdV $N$-soliton solution \cite{MMT}.

\medskip

Finally, let us explain the organization of this paper. In Section \ref{2} we study generalized Weinstein conditions satisfied by breather solutions. In Section \ref{3} we prove that any breather profile satisfies a fourth order, nonlinear ODE. Section \ref{4} is devoted to the study of a linear operator associated to the breather solution. In Section \ref{5} we introduce new Lyapunov functional which controls the dynamics. Finally, in Section \ref{6} we prove a detailed version of Theorem \ref{MT}.

\medskip

 \noindent
{\bf Acknowdlegments.} We would like to thank Yvan Martel, Frank Merle, Carlos Kenig and Luis Vega for many useful comments on a first version of this paper.

\bigskip

\section{Stability tests}\label{2}

\medskip

The purpose of this section is to obtain generalized Weinstein conditions for any breather $B$. Indeed, for the case of the mKdV soliton (\ref{Sol}),  the mass (\ref{M1}) and the energy (\ref{E1}) are given by the quantities
\be\label{MQc}
M[Q_c] = \frac 12 c^{1/2}\int_\R Q^2 =  2c^{1/2},
\ee
\be\label{EQc}
E[Q_c] = c^{3/2}E[Q] = -\frac 13 c^{3/2} M[Q] = -\frac 23 c^{3/2}<0.
\ee
These two identities show the explicit dependence of the mass and the energy on the soliton scaling parameter. In particular, the Weinstein condition \cite{We2} reads, for $c>0$,
\be\label{WC}
\partial_c M[Q_c] = c^{-1/2}>0.
\ee
This condition ensures the nonlinear stability of the soliton. We consider now the case of mKdV breathers. Surprisingly enough, the mass of a mKdV breather only depends  on the first scaling parameter $\beta$. In other words, it is independent of $\al$. 

\begin{lem}\label{ME} Let $B=B_{\al,\bt}$ be any mKdV breather, for $\al, \bt>0$. Then
\be\label{MassB}
M[B](t) = 2\bt M[Q]=4\bt .
\ee
\end{lem}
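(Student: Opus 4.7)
The plan is to use the time-invariance of the mass $M$ along solutions of \eqref{mKdV} to reduce the computation to a single time, and then evaluate the resulting integral explicitly. Since $M[B](t)=M[B](0)$ by \eqref{M1}, it suffices to treat $t=0$, where $y_1=y_2=x$ and the breather admits the simpler form
\[
B(0,x) \;=\; 2\sqrt{2}\,\partial_x\arctan\!\left(\frac{\beta}{\alpha}\,\frac{\sin(\alpha x)}{\cosh(\beta x)}\right).
\]
Writing $B = 2\sqrt{2}\,\partial_x F$ with $F = \arctan G$ and $G = (\beta/\alpha)\sin(\alpha x)/\cosh(\beta x)$, the lemma reduces to the integral identity
\[
\int_\R (\partial_x F)^2\,dx \;=\; \beta.
\]

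To carry this out, I would introduce the complex auxiliary function
\[
P(x) \;=\; \cosh(\beta x) - i\,\tfrac{\beta}{\alpha}\,\sin(\alpha x),
\]
which is nowhere vanishing on $\R$ since $\mathrm{Re}\,P \geq 1$. This gives the clean representation $F = -\arg P$ and $B = -2\sqrt{2}\,\mathrm{Im}(P_x/P)$. A direct computation using $(\cosh(\beta x))'' = \beta^2 \cosh(\beta x)$ and $(\sin(\alpha x))'' = -\alpha^2 \sin(\alpha x)$ yields the key algebraic identity
\[
P_{xx} \;=\; \tfrac{\beta^2 - \alpha^2}{2}\,P \;+\; \tfrac{\alpha^2 + \beta^2}{2}\,\bar{P},
\]
together with the asymptotics $P_x/P \to \pm\beta$ as $x \to \pm\infty$. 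Combining these with integration by parts, the idea is to establish the pointwise decomposition
\[
B^2(0,x) \;=\; 4\beta^2\,\sech^2(\beta x) \;+\; \partial_x h(x)
\]
for some bounded function $h$ vanishing at $\pm\infty$, so that
\[
\int_\R B^2\,dx \;=\; 4\beta^2 \int_\R \sech^2(\beta x)\,dx \;=\; 8\beta,
\]
yielding $M[B] = 4\beta = 2\beta\,M[Q]$ as claimed.

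The main obstacle is the explicit construction of the antiderivative $h$: it will be an oscillatory combination of $\sin(\alpha x),\cos(\alpha x),\sinh(\beta x),\cosh(\beta x)$ divided by $D = |P|^2 = \cosh^2(\beta x) + (\beta/\alpha)^2 \sin^2(\alpha x)$, and verifying the identity $B^2 - 4\beta^2 \sech^2(\beta x) = \partial_x h$ by direct differentiation amounts to a substantial but elementary trigonometric-hyperbolic computation. As a consistency check, the scaling identity $c^{1/2}B_{\alpha,\beta}(c^{3/2}t, c^{1/2}x) = B_{c^{1/2}\alpha, c^{1/2}\beta}(t,x)$ forces $M[B_{\alpha,\beta}]$ to be linear in $\beta$ without any separate $\alpha$-dependence, and in the wave-packet regime $\alpha \gg \beta$ one has $B \approx 2\sqrt{2}\,\beta\cos(\alpha x)\sech(\beta x)$, so averaging $\cos^2(\alpha x) \to 1/2$ recovers the same value $8\beta$, lending credence to both the strategy and the final answer.
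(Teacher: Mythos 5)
Your reduction to $t=0$ via conservation of mass, and your target identity $\int_\R B^2 = 8\beta$, are both correct, and the complex reformulation through $P(x)=\cosh(\beta x)-i(\beta/\alpha)\sin(\alpha x)$ (with the identity $P_{xx}=\tfrac{\beta^2-\alpha^2}{2}P+\tfrac{\alpha^2+\beta^2}{2}\bar P$, which I checked) is a clean way to package the breather. But there is a genuine gap: the entire content of the lemma is the explicit evaluation of $\int_\R B^2$, and that is exactly the step you defer. Your proposed decomposition $B^2=4\beta^2\sech^2(\beta x)+\partial_x h$ with $h$ bounded and vanishing at $\pm\infty$ is, for \emph{any} integrable function with integral $8\beta$, trivially true by taking $h(x)=\int_{-\infty}^x\bigl(B^2-4\beta^2\sech^2(\beta s)\bigr)ds$ --- so writing it down proves nothing unless you actually exhibit $h$ in closed form and verify $\partial_x h = B^2-4\beta^2\sech^2(\beta x)$ by differentiation. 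You acknowledge this ("the main obstacle is the explicit construction of the antiderivative $h$") but do not carry it out, and neither the identity for $P_{xx}$ nor the asymptotics $P_x/P\to\pm\beta$ is shown to produce $h$. The paper's proof consists precisely of this computation: after double-angle reductions it exhibits explicit $f_{\alpha,\beta}$, $g_{\alpha,\beta}$ with $f_{\alpha,\beta}'g_{\alpha,\beta}-f_{\alpha,\beta}g_{\alpha,\beta}'=2\alpha^2\beta\, h_{\alpha,\beta}$, whence $B^2=4\beta\,(f_{\alpha,\beta}/g_{\alpha,\beta})'$ and the mass follows by letting $x\to+\infty$. Until you produce the analogous explicit primitive, the proposal is a plan rather than a proof.

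Two smaller points. First, your scaling "consistency check" is overstated: the identity $c^{1/2}B_{\alpha,\beta}(c^{3/2}t,c^{1/2}x)=B_{c^{1/2}\alpha,c^{1/2}\beta}(t,x)$ only forces $M[B_{\alpha,\beta}]$ to be homogeneous of degree one in $(\alpha,\beta)$ jointly, i.e.\ $M[B_{\alpha,\beta}]=\beta\,m(\alpha/\beta)$ for some function $m$; it does not by itself rule out $\alpha$-dependence. The remarkable fact that $m$ is constant is exactly what the lemma asserts and must be proved. Second, the lemma is stated for any breather of the four-parameter family, so you should also invoke translation invariance (as the paper does) to set $x_1=x_2=0$, not only $t=0$.
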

\begin{proof}
We start by writing the breather solution in a more tractable way. From the conservation of mass and invariance under spatial and time translations, we can assume $x_1=x_2 =t=0$ in (\ref{BB}). We have then
\[
B^2(0,x) =  8 \bt^2 \sech^2 (\bt x) \Big[Ê\frac{\cos (\al x) - (\bt /\al) \sin (\al x) \tanh (\bt x)  }{1 +(\bt/\al)^2 \sin^2 (\al x)  \sech^2 (\bt x) } \Big]^2.
\]  
Expanding the square in the numerator, we get after some simplifications
\begin{align*}
 & B^2(0,x)  =   8 \al^2\bt^2 \times \\
  &  \times \Big[Ê\frac{ \al^2 \cosh^2 (\bt x) \cos^2(\al x) +\bt^2 \sin^2(\al x) \sinh^2(\bt x) -2\al\bt \sin(\al x)\cos (\al x) \sinh (\bt x)\cosh (\bt x)  }{(\al^2 \cosh^2 (\bt x) + \bt^2 \sin^2(\al x))^2 } \Big].
\end{align*}
Now the purpose is to use double angle formulas to avoid the squares. More precisely, it is well known that
\be\label{double1}
\cos^2(\al x) =\frac 12 (1+\cos (2\al x)), \quad \sin^2(\al x) =\frac 12 (1-\cos (2\al x)),
\ee
and
\be\label{double2}
\cosh^2 (\bt x) =\frac 12 (1+\cosh(2\bt x)), \quad \sinh^2(\bt x) =\frac 12 (\cosh (2\bt x)-1).
\ee
We replace these identities in the previous expression above. We obtain
\[
B^2(0,x) =Ê\frac{ 8\al^2 \bt^2 h_{\al,\bt}(x)}{(\al^2 +\bt^2  +\al^2 \cosh (2\bt x) - \bt^2 \cos(2\al x))^2 },
\]
with
\begin{align}
h_{\al,\bt}(x) & :=  \al^2 -\bt^2 +(\al^2 +\bt^2) (\cos (2\al x) +\cosh (2\bt x)) \nonu\\
&  \qquad + (\al^2 -\bt^2)\cos(2\al x)\cosh(2\bt x) -2\al\bt \sin(2\al x)\sinh (2\bt x). \label{fab} 
\end{align}
In what follows, let 
\[
f_{\al,\bt}(x):= \al^2 +\bt^2 +\al\bt \sin(2\al x) -\bt^2 \cos(2\al x) +\al^2 (\sinh(2\bt x) + \cosh(2\bt x)),
\]
and
\[
g_{\al,\bt}(x) :=\al^2 +\bt^2  +\al^2 \cosh (2\bt x) - \bt^2 \cos(2\al x).
\]
It is clear that 
\[
f'_{\al,\bt} (x)= 2\al\bt[ \al \cos (2\al x) + \bt \sin(2\al x) +\al \cosh (2\bt x) +\al \sinh(2\bt x)], 
\]
and
\[
g'_{\al,\bt} (x) = 2\al\bt[\bt \sin(2\al x) + \al \sinh(2\bt x)].
\]
Therefore, after a lengthy but direct computation,
\[
 f'_{\al,\bt} (x) g_{\al,\bt} (x) -f_{\al,\bt} (x)g'_{\al,\bt} (x) = 2\al^2 \bt h_{\al,\bt }(x),
\]
and then
\[
B^2(0,x) = 4\bt \frac{f'_{\al,\bt} (x) g_{\al,\bt} (x) -f_{\al,\bt} (x)g'_{\al,\bt} (x)}{g_{\al,\bt} ^2(x)} =4\bt \Big( \frac{f_{\al,\bt}}{g_{\al,\bt}}\Big)'.
\]
In conclusion, we have proved that
\[
\frac 12 \int_{-\infty}^x B^2(0,s)ds = \frac{2\bt [ \al^2 +\bt^2 +\al\bt \sin(2\al x) -\bt^2 \cos(2\al x) +\al^2 (\sinh(2\bt x) + \cosh(2\bt x))]}{\al^2 +\bt^2  +\al^2 \cosh (2\bt x) - \bt^2 \cos(2\al x)}.
\]
Taking limit as $x\to +\infty$, we get the desired conclusion.
\end{proof}

\begin{rem}
Note that the last integral above does not change if we consider a general breather, of the form (\ref{BB}).  Indeed, our proof does not require the time independence of the solution. Then we get
\begin{align}\label{Masss1}
 \mathcal M_{\al,\bt}(t,x)  &:= \frac 12\int_{-\infty}^x B_{\al,\bt}^2(t,s; x_1,x_2)ds \nonu\\
 &~= \frac{2\bt [ \al^2 +\bt^2 +\al\bt \sin(2\al y_1) -\bt^2 \cos(2\al y_1) +\al^2 (\sinh(2\bt y_2) + \cosh(2\bt y_2))]}{\al^2 +\bt^2  +\al^2 \cosh (2\bt y_2) - \bt^2 \cos(2\al y_1)},  
\end{align}
with $y_1$ and $y_2$ defined in (\ref{BB}). This last expression will be useful in Lemma \ref{Id1}.
\end{rem}

A direct consequence of the results above are the following generalized Weinstein conditions:

\begin{cor}\label{WCcor} Let $B= B_{\al,\bt}$ be any mKdV breather of the form \eqref{BB}. Given $t\in \R$ fixed, let
\be\label{LAB}
\Lambda_\al B := \partial_\al B, \quad \hbox{ and }\quad  \Lambda_\bt B := \partial_\bt B.
\ee
Then both functions $\Lambda_\al B$ and $\Lambda_\bt B$ are in the  Schwartz class for the spatial variable, and satisfy the identities
\be\label{LAB1}
\partial_\al M[ B]  = \int_\R B \Lambda_\al B = 0,
\ee
and
\be\label{LAB2}
\partial_\bt M[ B]   = \int_\R B \Lambda_\bt B = 4>0,
\ee
independently  of time.
\end{cor}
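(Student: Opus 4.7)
The plan is to reduce everything to Lemma \ref{ME}, which furnishes the explicit formula $M[B_{\al,\bt}](t)=4\bt$. Since the right-hand side is independent of $\al$ and linear in $\bt$, differentiating in $\al$ and $\bt$ yields the scalar values $0$ and $4$, respectively. The proof then reduces to two tasks: verifying that $\Lambda_\al B$ and $\Lambda_\bt B$ lie in the Schwartz class in the spatial variable, and justifying the exchange of the parameter differentiation with the integral defining $M[B]$.

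For the Schwartz property, I would work directly from the explicit formula \eqref{BB}. The denominator $1+(\bt/\al)^2\sin^2(\al y_1)\sech^2(\bt y_2)$ is real analytic in $(\al,\bt)\in(0,\infty)^2$ and uniformly bounded below by $1$, so $B_{\al,\bt}(t,\cdot)$ depends real analytically on $(\al,\bt)$ for each fixed $t$. Differentiating in $\al$ or $\bt$, while keeping in mind that $y_1,y_2$ also depend on these parameters through $\delta=\al^2-3\bt^2$ and $\ga=3\al^2-\bt^2$, produces a finite linear combination of terms of the form
\[
p(x,t)\,q\bigl(\sin(\al y_1),\cos(\al y_1)\bigr)\,r\bigl(\sech(\bt y_2),\tanh(\bt y_2)\bigr),
\]
with $p$ polynomial in $(x,t)$ and $q,r$ polynomials in their arguments. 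Since $\sech(\bt y_2)$ provides exponential decay in $x$ that dominates any polynomial growth coming from $p$, each $x$-derivative of $\Lambda_\al B$ and $\Lambda_\bt B$ is Schwartz in $x$, as required.

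For the exchange of derivative and integral, I would invoke dominated convergence: on any compact neighborhood of $(\al,\bt)$ in $(0,\infty)^2$, the pointwise products $B\,\partial_\al B$ and $B\,\partial_\bt B$ are uniformly dominated by a Schwartz function of $x$, hence uniformly bounded in $L^1(\R)$. Therefore
\[
\partial_\al M[B]=\tfrac{1}{2}\partial_\al\!\int_\R B^2\,dx=\int_\R B\,\Lambda_\al B\,dx,
\]
and analogously for $\partial_\bt$. Combining with $M[B]=4\bt$ yields \eqref{LAB1} and \eqref{LAB2}; the time-independence of both sides follows either from the mass conservation law \eqref{M1}, or, equivalently, from the fact that $M[B]=4\bt$ itself is manifestly independent of $t$. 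I do not foresee a real obstacle: the content is essentially the differentiation in parameters of the closed-form identity $M[B]=4\bt$ established in the previous lemma, supplemented by a routine check of analyticity and integrability of the breather in $(\al,\bt)$.
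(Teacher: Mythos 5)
Your proposal is correct and follows essentially the same route as the paper: both reduce \eqref{LAB1} and \eqref{LAB2} to differentiating the closed-form mass $M[B]=4\bt$ from Lemma \ref{ME}, with the Schwartz property checked by inspection of the explicit formula. You merely spell out the routine justifications (polynomial-times-$\sech$ structure of the parameter derivatives, dominated convergence for differentiating under the integral) that the paper leaves implicit.
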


\begin{proof}
By simple inspection, one can see that, given $t$ fixed, $\Lambda_\al B_{\al, \bt} $ and $\Lambda_\bt B_{\al, \bt} $ are well-defined Schwartz functions. The proof of (\ref{LAB1}) and (\ref{LAB2}) is consequence of (\ref{MassB}), and the definition of mass (\ref{M1}).
\end{proof}

\begin{rem}
Comparing (\ref{LAB1}) and (\ref{LAB2}) with the Weinstein condition (\ref{WC}), we may think that the second scaling parameter $\al$ is $L^2$-critical. On the opposite side, the first scaling  $\beta$ can be seen as a \emph{stable} parameter.  
\end{rem}

\begin{lem}\label{Id1} Let $B=B_{\al,\bt}$ be any breather of the form \eqref{BB}, with $\al,\bt>0$. Then we have
\ben
\item  $B =\tilde B_x$, with $\tilde B=\tilde B_{\al,\bt}$ given by the smooth $L^\infty$-function
\be\label{tB}
\tilde B(t,x) := 2\sqrt{2}  \arctan \Big( \frac{\bt}{\al}\frac{\sin(\al y_1)}{\cosh(\bt y_2)}\Big). 
\ee
\item For any fixed $t\in \R$, we have $ \tilde B_t$ well-defined in the Schwartz class, satisfiying
\be\label{2nd}
B_{xx} + \tilde B_t + B^3 = 0.
\ee
\item Finally, let $\mathcal M_{\al,\bt}$ be defined by (\ref{Masss1}). Then
\be\label{First}
B_x^2  + \frac 12 B^4 + 2B \tilde B_t - 2(\mathcal M_{\al,\bt})_t=0.
\ee
\een
\end{lem}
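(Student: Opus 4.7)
The plan is to establish the three parts in sequence, working entirely with the explicit form (\ref{BB})--(\ref{tB}) of the breather, the mKdV equation (\ref{mKdV}), and integration in $x$ from $-\infty$. The technical input I will use throughout is that $B$, $\tilde B$, $\tilde B_t$, and all their spatial derivatives decay exponentially as $|x|\to\infty$ on compact time intervals, which is clear from the explicit formulas and the exponential decay of $\sech(\bt y_2)$. For part (1), comparing definitions (\ref{BB}) and (\ref{tB}) already gives $B=\tilde B_x$ by construction; $\tilde B$ is smooth and $L^\infty$ because $\arctan$ is $C^\infty$ with values in $(-\pi/2,\pi/2)$ and its argument is a smooth bounded function of $(t,x)$, and in fact $\tilde B$ and $\tilde B_t$ are Schwartz in $x$ for each fixed $t$, since the argument of the $\arctan$ decays exponentially in $|x|$.

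For part (2), the plan is to write the mKdV equation (\ref{mKdV}) for $B$ as $B_t+B_{xxx}+(B^3)_x=0$ and integrate in $x$ from $-\infty$. Since $B=\tilde B_x$, the term with $B_t$ integrates to $\tilde B_t(t,x)-\lim_{s\to-\infty}\tilde B_t(t,s)$, and the limit vanishes by the Schwartz decay from (1). The remaining two terms integrate directly to $B_{xx}$ and $B^3$, yielding exactly (\ref{2nd}).

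For part (3), the key computation is to evaluate $(\mathcal M_{\al,\bt})_t$ in closed form. Differentiating (\ref{Masss1}) under the integral gives $(\mathcal M_{\al,\bt})_t=\int_{-\infty}^x B\,B_t\,ds$, and using mKdV to replace $B_t=-B_{xxx}-3B^2B_x$, the integrand is the total $\partial_s$-derivative
\[
B\,B_t \;=\; -\partial_s(B\,B_{ss})+\tfrac{1}{2}\partial_s(B_s^2)-\tfrac{3}{4}\partial_s(B^4).
\]
Integrating from $-\infty$ and again using the Schwartz decay from (1) to kill the boundary contributions at $-\infty$, I obtain $(\mathcal M_{\al,\bt})_t=-B\,B_{xx}+\tfrac{1}{2}B_x^2-\tfrac{3}{4}B^4$. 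Finally, substituting $B_{xx}=-\tilde B_t-B^3$ from part (2) converts $-B\,B_{xx}$ into $B\,\tilde B_t+B^4$, and multiplying by $2$ produces exactly (\ref{First}) after collecting terms.

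The main obstacle is not any algebraic difficulty but the careful justification that every boundary term at $x=-\infty$ in the integrations above vanishes; this is a uniform consequence of the Schwartz decay recorded in part (1), but it has to be verified separately for $\tilde B_t$, for $B\,B_{xx}$, for $B_x^2$, and for $B^4$. Once those boundary limits are in place, the remainder of the argument is a short rearrangement, and the rest of the proof of parts (1)--(3) is essentially bookkeeping.
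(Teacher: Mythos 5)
Your proposal is correct, and parts (1) and (2) coincide exactly with the paper's argument: (1) is definitional, and (2) is obtained by integrating \eqref{mKdV} in $x$ from $-\infty$, using $B=\tilde B_x$ and the decay of $\tilde B_t$ at $-\infty$. For part (3) you take a slightly different (but equivalent) route. The paper multiplies \eqref{2nd} by $B_x$ and integrates in space, which after an integration by parts in the term $\int_{-\infty}^x \tilde B_t B_x$ (using $\tilde B_{tx}=B_t$ and $\partial_t\mathcal M_{\al,\bt}=\int_{-\infty}^x BB_t$) gives \eqref{First} directly. You instead compute $(\mathcal M_{\al,\bt})_t=\int_{-\infty}^x BB_t\,ds$ from the PDE, observing that $BB_t=-\partial_s(BB_{ss})+\tfrac12\partial_s(B_s^2)-\tfrac34\partial_s(B^4)$, and then eliminate $B_{xx}$ via \eqref{2nd}; I have checked that this algebra closes and yields exactly $2(\mathcal M_{\al,\bt})_t=B_x^2+\tfrac12 B^4+2B\tilde B_t$. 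The two derivations are the same computation organized differently: the paper's version reads as a first integral of the stationary-type equation \eqref{2nd}, while yours reads as a local conservation-law identity for the mass density; neither buys anything substantive over the other. Your insistence on verifying the vanishing of the boundary terms at $x=-\infty$ is the right level of care, and the exponential decay of $B$, $\tilde B$, $\tilde B_t$ and all $x$-derivatives for fixed $t$ indeed justifies every such step.
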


\begin{proof}
The first item above is a direct consequence of the definition of $B_{\al,\bt}$ in (\ref{BB}). On the other hand, (\ref{2nd}) is a consequence of (\ref{tB}) and integration in space (from $-\infty$ to $x$) of (\ref{mKdV}). Finally, to obtain (\ref{First}) we multiply (\ref{2nd}) by $B_x$ and integrate in space. 
\end{proof}

\begin{rem}
The reader may compare (\ref{2nd})-(\ref{First}) with the well known identities for the soliton solution of mKdV:
\[
Q_c'' -cQ_c + Q_c^3 =0, \qquad Q_c'^2 -cQ_c^2 +\frac 12 Q_c^4 =0.
\]
\end{rem}

We compute now the energy of a breather solution.

\begin{lem}\label{ME2} Let $B=B_{\al,\bt}$ be any mKdV breather, for $\al, \bt>0$. Then
\be\label{EnergyB}
E[B] =  2\bt (3\al^2-\bt^2) |E[Q]|  = 2\bt \ga |E[Q]|.
\ee
\end{lem}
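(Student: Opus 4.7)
The plan is to reduce the computation of $E[B]$ to integrating the time derivative of the running mass $\mathcal{M}_{\al,\bt}$ from Lemma \ref{Id1}, and then to evaluate the resulting integral using the intrinsic space-time periodicity of the breather profile.

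\emph{Reduction.} Multiplying (\ref{2nd}) by $B$ and integrating by parts over $\R$ gives
\[
\int_\R B \tilde B_t \, dx = -\int_\R B B_{xx}\, dx - \int_\R B^4\, dx = \int_\R B_x^2 \, dx - \int_\R B^4\, dx.
\]
Substituting this into the $\R$-integral of identity (\ref{First}) and simplifying,
\[
3\int_\R B_x^2 \, dx - \tfrac{3}{2}\int_\R B^4 \, dx = 2\int_\R (\mathcal{M}_{\al,\bt})_t \, dx,
\]
which is equivalent to $E[B] = \tfrac{1}{3} \int_\R (\mathcal{M}_{\al,\bt})_t(t,x)\, dx$.

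\emph{Evaluation via periodicity.} Since $\mathcal{M}_{\al,\bt}(t,\pm\infty)$ equal the time-independent values $4\bt$ and $0$, the regularized tail
\[
\mathcal{R}(t) := \int_0^{\infty} (\mathcal{M}_{\al,\bt}(t,x) - 4\bt)\, dx + \int_{-\infty}^{0} \mathcal{M}_{\al,\bt}(t,x)\, dx
\]
is finite, and $\mathcal{R}'(t) = \int_\R (\mathcal{M}_{\al,\bt})_t\, dx = 3E[B]$. Since $E[B]$ is conserved under the mKdV flow, $\mathcal{R}$ is linear in $t$. On the other hand, from (\ref{deltagamma}) and the sign-symmetry (\ref{dege}), the explicit form (\ref{breather}) satisfies the quasi-periodicity
\[
B(t + T, x) = -B(t, x + \ga T), \qquad T := \frac{\pi}{2\al(\al^2+\bt^2)},
\]
so $B^2$ is exactly $T$-periodic with spatial shift $\ga T$, which forces $\mathcal{M}_{\al,\bt}(t+T, x) = \mathcal{M}_{\al,\bt}(t, x + \ga T)$. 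A direct change of variables in $\mathcal R$ yields $\mathcal{R}(t+T) - \mathcal{R}(t) = 4\bt \ga T$, so linearity gives
\[
3E[B] = \frac{\mathcal{R}(t+T) - \mathcal{R}(t)}{T} = 4\bt\ga.
\]

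\emph{Conclusion and obstacle.} From (\ref{EQc})-(\ref{MQc}), $|E[Q]| = \tfrac{1}{3}M[Q] = \tfrac{2}{3}$, hence
\[
E[B] = \frac{4\bt\ga}{3} = 2\bt\ga \, |E[Q]| = 2\bt(3\al^2-\bt^2)|E[Q]|.
\]
The main technical step is the verification of the quasi-periodicity of $B$ and the careful bookkeeping of the boundary shift $\mathcal{R}(t+T)-\mathcal{R}(t) = 4\bt\ga T$ (handling both signs of $\ga T$). Alternatively, one could compute $\int_\R (\mathcal M_{\al,\bt})_t\, dx$ directly from the explicit formula (\ref{Masss1}) using the double-angle identities (\ref{double1})-(\ref{double2}) in the spirit of Lemma \ref{ME}, at the cost of considerably more algebra.
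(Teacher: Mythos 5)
Your reduction to $E[B]=\tfrac13\int_\R(\mathcal M_{\al,\bt})_t$ is exactly the paper's first step (identity (\ref{red})), obtained the same way from (\ref{2nd}) and (\ref{First}). Where you genuinely diverge is in evaluating $\int_\R(\mathcal M_{\al,\bt})_t$. The paper does this by brute force: it writes $(\mathcal M_{\al,\bt})_t=2\bt(f_tg-fg_t)/g^2$ explicitly, splits the resulting numerator as $\tilde h_{\al,\bt}=\ga h_{\al,\bt}+2(\al^2+\bt^2)\hat h_{\al,\bt}$, integrates the $\ga h_{\al,\bt}$ piece using Lemma \ref{ME}, and kills the $\hat h_{\al,\bt}$ piece by exhibiting $-\sin(2\al y_1)/(2\al\, g_{\al,\bt})$ as an explicit antiderivative. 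You instead exploit structure: the quasi-periodicity $B(t+T,x)=-B(t,x+\ga T)$ with $T=\pi/(2\al(\al^2+\bt^2))$ (which is correct, since $\al(\delta-\ga)T=-\pi$ and is consistent with (\ref{dege})), the conservation of $E$ along the mKdV flow to make $\mathcal R$ linear in $t$, and the already-computed mass $M[B]=4\bt$ to evaluate the boundary contribution $\mathcal R(t+T)-\mathcal R(t)=4\bt\ga T$. All the ingredients check out: the regularization of $\mathcal R$ is legitimate because $\mathcal M_{\al,\bt}$ approaches its limits $0$ and $4\bt$ exponentially fast, and differentiation under the integral sign is justified by the exponential decay of $(\mathcal M_{\al,\bt})_t$. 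Your argument is cleaner and more conceptual --- it explains \emph{why} the answer is $\tfrac43\bt\ga$ (mass transported at velocity $-\ga$ per period, up to sign) rather than verifying it --- and it avoids the trigonometric bookkeeping entirely. What the paper's computation buys in exchange is the pointwise formula (\ref{BMt}) for $(\mathcal M_{\al,\bt})_t$ together with the vanishing identity (\ref{int0}), both of which are reused later (e.g.\ in the proof of (\ref{ide1})), so the algebra is not wasted there. As a standalone proof of Lemma \ref{ME2}, your route is complete and correct; just make sure to state explicitly that the exchange of $\partial_t$ and $\int_\R$ in $\mathcal R'(t)$ is covered by the uniform exponential decay of the integrand.
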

Let us remark that  the sign of the energy is dictated by the sign of the velocity $\ga$. 

\begin{proof}
First of all, let us prove the following reduction
\be\label{red}
E[B] (t)= \frac 13\int_\R (\mathcal M_{\al,\bt})_t(t,x)dx .
\ee
Indeed,  we multiply (\ref{2nd}) by $B_{\al,\bt}$ and integrate in space: we get
\[
\int_\R B_{x}^2  = \int_\R B \tilde B_t + \int_\R B^4.
\]
On the other hand, integrating (\ref{First}),
\[
\int_\R B_x^2  + \frac 12 \int_\R B^4 + 2 \int_\R B  \tilde B_t - 2 \int_\R (\mathcal M_{\al,\bt})_t=0.
\]
From these two identities, we get
\[
 \int_\R B^4 = \frac 43 \int_\R (\mathcal M_{\al,\bt})_t -2  \int_\R B \tilde B_t ,
\]
and therefore
\[
\int_\R B_{x}^2  = \frac 43 \int_\R (\mathcal M_{\al,\bt})_t -  \int_\R B \tilde B_t .
\]
Finally, replacing the last two identities in (\ref{E1}), we get (\ref{red}), as desired.

\medskip

Now we prove (\ref{EnergyB}).  From (\ref{Masss1}) and similar to the proof of Lemma \ref{ME}, we have
\[
(\mathcal M_{\al,\bt})_t  = 2\bt\frac{   (f_{\al,\bt})_t g_{\al,\bt}  -f_{\al,\bt} (g_{\al,\bt})_t  }{g_{\al,\bt}^2},
\]
where, with a slight abuse of notation, $f_{\al,\bt}$ and $g_{\al,\bt}$ are given now by
\[
f_{\al,\bt} = \al^2 +\bt^2 +\al\bt \sin(2\al y_1) -\bt^2 \cos(2\al y_1) +\al^2 (\sinh(2\bt y_2) + \cosh(2\bt y_2)),
\]
and
\be\label{gab}
g_{\al,\bt} =\al^2 +\bt^2  +\al^2 \cosh (2\bt y_2) - \bt^2 \cos(2\al y_1).
\ee
It is clear that 
\[
(f_{\al,\bt})_t= 2\al\bt[ \al \delta \cos (2\al y_1) + \bt  \delta \sin(2\al y_1) +\al \ga \cosh (2\bt y_2) +\al \ga \sinh(2\bt y_2)], 
\]
and
\[
(g_{\al,\bt})_t = 2\al\bt[\bt \delta \sin(2\al y_1) + \al \ga \sinh(2\bt y_2)].
\]
Therefore
\[
 (f_{\al,\bt})_t g_{\al,\bt}  -f_{\al,\bt} (g_{\al,\bt})_t  = 2\al^2 \bt \tilde h_{\al,\bt },
\]
and
\begin{align}
\tilde h_{\al,\bt} & :=  \ga \al^2 -\delta \bt^2 +(\al^2 +\bt^2) (\delta \cos (2\al y_1) + \ga \cosh (2\bt y_2)) \nonu\\
&  \qquad + (\delta \al^2 - \ga \bt^2)\cos(2\al y_1)\cosh(2\bt y_2) - \al\bt(\delta +\ga) \sin(2\al y_1)\sinh (2\bt y_2).  \label{fab1} 
\end{align}
In conclusion,
\be\label{BMt}
\frac 13(\mathcal M_{\al,\bt})_t  = \frac{  4\al^2 \bt^2 \tilde h_{\al,\bt}  }{3(\al^2 +\bt^2  +\al^2 \cosh (2\bt y_2) - \bt^2 \cos(2\al y_1))^2}.
\ee
Now we split $\tilde h_{\al,\bt}$ into two pieces, according to the parameter $\ga$. From the definition of $\ga$, $\delta$ and $h_{\alpha,\beta}$, we have
\[
\tilde h_{\al,\bt} = \ga h_{\al,\bt} + 2(\al^2 +\bt^2) \hat h_{\al,\bt}, 
\]
where
\[
\hat h_{\al,\bt} :=  \bt^2 -(\al^2 +\bt^2) \cos(2\al y_1) -\al^2 \cos(2\al y_1)\cosh(2\bt y_2) +\al\bt \sin(2\al y_1)\sinh(2\bt y_2).
\]
Note that from Lemma \ref{ME}, more precisely (\ref{Masss1}),
\[
\int_\R  \frac{  4\al^2 \bt^2 \ga  h_{\al,\bt}  }{3(\al^2 +\bt^2  +\al^2 \cosh (2\bt y_2) - \bt^2 \cos(2\al y_1))^2} = \frac 43 \bt \ga  = 2 \bt \ga |E[Q]|,
\]
then, in order to conclude, from (\ref{BMt}) and (\ref{red}) we reduce to prove that 
\be\label{int0}
\int_\R  \frac{ \hat h_{\al,\bt}  }{(\al^2 +\bt^2  +\al^2 \cosh (2\bt y_2) - \bt^2 \cos(2\al y_1))^2} = 0.
\ee
We prove this identity, noting that $(\frac 1{2\al}\sin(2\al y_1))_x = \cos(2\al y_1)$, and from (\ref{gab}),
\begin{align*}
&   g_{\al,\bt}(\frac 1{2\al}\sin(2\al y_1))_x  - \frac 1{2\al}\sin(2\al y_1) (g_{\al,\bt})_x =\\
&  \quad = \cos(2\al y_1)[ \al^2 +\bt^2  +\al^2 \cosh (2\bt y_2) - \bt^2 \cos(2\al y_1) ] -  \bt \sin(2\al y_1) [\bt \sin(2\al y_1) + \al  \sinh(2\bt y_2)]\\
&  \quad = -\bt^2 ( \cos^2(2\al y_1)  + \sin^2(2\al y_1)) +(\al^2 +\bt^2)\cos(2\al y_1) +\al^2 \cos(2\al y_1) \cosh (2\bt y_2) \\
&  \qquad  \qquad  - \al \bt \sin(2\al y_1)   \sinh(2\bt y_2) \\
&  \quad = -\hat h_{\al,\bt}.
\end{align*}
Therefore,
\[
\hbox{ l.h.s. of (\ref{int0})}  = \lim_{x\to +\infty}\frac{ - \sin(2\al y_1)}{2\al(\al^2 +\bt^2  +\al^2 \cosh (2\bt y_2) - \bt^2 \cos(2\al y_1))} =0.
\]
\end{proof}

\begin{rem}
Note that we could follow the approach by Lax \cite[pp. 479--481]{LAX1} to obtain reduced expressions for the mass and energy of a breather solution. However, the resulting terms are actually harder to manage than our  direct approach.
\end{rem}

\begin{cor}\label{WCcor2} Let $B=B_{\al,\bt}$ be any mKdV breather. Then
\be\label{LAB3}
\partial_\al E[B] = 12 \al\bt |E[Q]|>0, \quad \partial_\bt E[B] = 6(\al^2 -\bt^2) |E[Q]|.
\ee
\end{cor}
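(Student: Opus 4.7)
The plan is to apply Lemma \ref{ME2} directly, since the lemma already gives a closed form for $E[B]$ as an explicit polynomial in $\al$ and $\bt$ multiplied by the constant $|E[Q]|$, so the corollary becomes a one-line differentiation.

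Concretely, I would start from the identity
\[
E[B_{\al,\bt}] \;=\; 2\bt(3\al^2-\bt^2)\,|E[Q]|
\]
established in Lemma \ref{ME2}, and simply compute the two partial derivatives. Differentiating in $\al$ with $\bt$ fixed gives
\[
\partial_\al E[B] \;=\; 2\bt \cdot 6\al \cdot |E[Q]| \;=\; 12\al\bt\,|E[Q]|,
\]
which is strictly positive since $\al,\bt>0$ by assumption (recall the reduction to $\al,\beta>0$ made right after Definition \ref{mKdVB}). Differentiating in $\bt$ with $\al$ fixed gives
\[
\partial_\bt E[B] \;=\; 2(3\al^2-\bt^2)|E[Q]| + 2\bt(-2\bt)|E[Q]| \;=\; (6\al^2 - 6\bt^2)|E[Q]| \;=\; 6(\al^2-\bt^2)|E[Q]|,
\]
which matches the claimed expression (and, unlike the $\al$-derivative, is sign-indefinite: positive when $\al>\bt$ and negative when $\al<\bt$).

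There is really no obstacle here — the entire difficulty has been absorbed into Lemma \ref{ME2}. The only thing worth remarking is the interpretative point, parallel to the one made after Corollary \ref{WCcor}: the strict positivity of $\partial_\al E[B]$ places the second scaling parameter $\al$ in a ``good'' direction at the energy level, while the sign-changing behavior of $\partial_\bt E[B]$ shows that the first scaling $\bt$ is more delicate, reinforcing the heuristic that $\al$ and $\bt$ play different roles in the stability analysis. I would include a brief sentence to that effect, to motivate the use of these Weinstein-type conditions later in the argument.
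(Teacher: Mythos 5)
Your proposal is correct and is exactly the intended argument: the paper states this as an immediate corollary of Lemma \ref{ME2}, and differentiating $E[B]=2\bt(3\al^2-\bt^2)|E[Q]|$ in $\al$ and $\bt$ gives precisely the two claimed identities. Both computations check out, including the strict positivity of $\partial_\al E[B]$ under the standing convention $\al,\bt>0$.
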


\bigskip

\section{Nonlinear stationary equations}\label{3}

\medskip

The objective of this section is to prove that any breather profile satisfies a suitable stationary, elliptic equation.

\begin{lem}
Let $B=B_{\al,\bt}$ be any mKdV breather. Then, for all $t\in \R$, 
\be\label{ide1}
 B_{xt} +  2(\mathcal M_{\al,\bt})_t B   = 2(\bt^2 -\al^2) \tilde B_t  +(\al^2 +\bt^2)^2 B.
\ee
\end{lem}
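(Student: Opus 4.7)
The plan is to verify \eqref{ide1} by direct computation on the explicit breather formula. The crucial simplification, already flagged after Definition \ref{mKdVB}, is that $\delta\neq\gamma$ whenever $\alpha,\beta\neq 0$, so $y_1=x+\delta t+x_1$ and $y_2=x+\gamma t+x_2$ can be treated as independent variables on $\mathbb{R}^2$, and the chain rule gives $\partial_x=\partial_{y_1}+\partial_{y_2}$, $\partial_t=\delta\,\partial_{y_1}+\gamma\,\partial_{y_2}$. Introducing $s=\sin(\alpha y_1)$, $c=\cos(\alpha y_1)$, $\mathfrak{c}=\cosh(\beta y_2)$, $\mathfrak{s}=\sinh(\beta y_2)$, and $G:=\alpha^2\mathfrak{c}^2+\beta^2 s^2$, every quantity entering \eqref{ide1} becomes a rational function of $s,c,\mathfrak{c},\mathfrak{s}$ whose denominator is a power of $G$.

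First I would write down closed-form expressions for the four ingredients of \eqref{ide1}. Differentiating \eqref{BB} and \eqref{tB} one finds
\[
B=\frac{2\sqrt{2}\,\alpha\beta(\alpha c\mathfrak{c}-\beta s\mathfrak{s})}{G},\qquad \tilde B_t=\frac{2\sqrt{2}\,\alpha\beta(\alpha\delta\, c\mathfrak{c}-\beta\gamma\, s\mathfrak{s})}{G},
\]
while $(\mathcal M_{\alpha,\beta})_t$ has already been recorded in \eqref{BMt}--\eqref{fab1} as a rational function with denominator a multiple of $G^2$. A further application of $\partial_x\partial_t$ to the expression for $B$ yields $B_{xt}$ as a rational function with denominator $G^3$.

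Second, I would multiply \eqref{ide1} by $G^3$ to reduce the claim to a polynomial identity in $(s,c,\mathfrak{c},\mathfrak{s})$, subject to the constraints $s^2+c^2=1$ and $\mathfrak{c}^2-\mathfrak{s}^2=1$, together with the relations among $\alpha,\beta,\delta,\gamma$ from \eqref{deltagamma}. To keep the bookkeeping manageable I would, following the strategy already used in Section \ref{2}, pass to the double-angle substitutions \eqref{double1}--\eqref{double2}, in which both $G$ (cf.\ \eqref{gab}) and the ``$h$-type'' numerators appearing in \eqref{fab}--\eqref{fab1} are linear combinations of $\{1,\cos 2\alpha y_1,\sin 2\alpha y_1,\cosh 2\beta y_2,\sinh 2\beta y_2\}$. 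The identity then reduces to matching the coefficients of the corresponding elementary monomials on each side.

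The main obstacle is the sheer algebraic bulk: after clearing denominators one is faced with a lengthy polynomial identity whose cancellations hinge on the precise relations $\gamma-\delta=2(\alpha^2+\beta^2)$, $\gamma+\delta=4(\alpha^2-\beta^2)$ and $\alpha^2\delta+\beta^2\gamma=(\alpha^2-\beta^2)(\alpha^2+\beta^2)$ produced by \eqref{deltagamma}; these are precisely the combinations out of which the right-hand side coefficients $2(\beta^2-\alpha^2)$ and $(\alpha^2+\beta^2)^2$ are assembled. Tracking them systematically turns the verification into a finite, if tedious, coefficient check, and no analytic input beyond the explicit form of the breather is needed.
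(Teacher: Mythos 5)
Your proposal is correct and follows essentially the same route as the paper: both compute $B$, $\tilde B_t$, $(\mathcal M_{\al,\bt})_t$ and $B_{xt}$ explicitly as rational functions with denominators equal to powers of $g_{\al,\bt}$ (your $G$, up to the double-angle rescaling), clear denominators, and verify the resulting polynomial identity using the relations $\ga-\delta=2(\al^2+\bt^2)$, $\ga+\delta=4(\al^2-\bt^2)$ coming from \eqref{deltagamma}. The closed forms you record for $B$ and $\tilde B_t$ agree with \eqref{Bsimpl0} and \eqref{Btexp}, and the remaining coefficient check is exactly the ``lengthy but direct'' computation the paper carries out.
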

\begin{proof} 
We make use of the explicit expression of the breather. An equivalent for the quantity $(\mathcal M_{\al,\bt})_t $ has been already computed in Lemma \ref{ME2}, see  (\ref{BMt}). On the other hand, from (\ref{tB}) and (\ref{BB}), and using double angle formulas in the denominator, we have,
\be\label{Btexp}
\tilde B_t  = 4\sqrt{2} \al \bt  \Big[Ê\frac{\al \delta \cos (\al y_1)\cosh(\bt y_2) - \bt \ga  \sin (\al y_1) \sinh (\bt y_2)  }{  \al^2 +\bt^2 +\al^2 \cosh(2\bt y_2) -\bt^2 \cos (2\al y_1)  } \Big].
\ee
From (\ref{breather}) we have
\begin{align}
B &  =    2\sqrt{2} \al \bt  \Big[Ê\frac{\al \cos (\al y_1)\cosh(\bt y_2) - \bt  \sin (\al y_1) \sinh (\bt y_2)  }{ \al^2 \cosh^2(\bt y_2) +\bt^2 \sin^2 (\al y_1)  } \Big] \label{Bsimpl0} \\
& = 4\sqrt{2} \al \bt  \Big[Ê\frac{\al \cos (\al y_1)\cosh(\bt y_2) - \bt  \sin (\al y_1) \sinh (\bt y_2)  }{ \al^2 +\bt^2 +\al^2 \cosh(2\bt y_2) -\bt^2 \cos (2\al y_1) } \Big].\label{Bsimpl}
\end{align}
Therefore, from (\ref{Bsimpl0}) and \eqref{BMt},
\[
(\mathcal M_{\al,\bt})_tB  = \frac{16 \sqrt{2} \al^3 \bt^3 \tilde{\theta}(t,x)}{(\al^2 +\bt^2  +\al^2 \cosh (2\bt y_2) - \bt^2 \cos(2\al y_1))^3}
\]
where, with the definition of $\tilde{h}$ in (\ref{fab1}),
\[
\tilde{\theta}(t,x)  :=   \tilde h_{\al,\bt} (\al \cos (\al y_1)\cosh(\bt y_2) - \bt  \sin (\al y_1) \sinh (\bt y_2)).
\]
Let us compute $B_{xt}$. First we have from (\ref{Bsimpl0}),
\[
 B_{x}  = \frac{4 \sqrt{2} \al \bt h_1(t,x)}{(\al^2+\bt^2+\al^2 \cosh(2 \bt y_2)-\bt^2 \cos(2 \al y_1))^2},
\]
where 
\begin{align*}
h_1 (t,x)& :=  -(\al^2+\bt^2) \cosh (\bt y_2)\sin(\al y_1) [\al^2+\bt^2 +\al^2 \cosh(2 \bt y_2)-\bt^2 \cos (2 \al y_1)]  \\
&  -2 \al \bt [\al \cos(\al y_1) \cosh(\bt y_2)-\bt \sin(\al y_1) \sinh(\bt y_2)] [\bt \sin(2 \al y_1)+\al  \sinh(2 \bt y_2)].
\end{align*}
Then,
\[
B_{xt} = \frac{4 \sqrt{2} \al \bt  h_2}{(\al^2+\bt^2+\al^2 \cosh( 2 \bt y_2)-\bt^2 \cos (2 \al y_1))^3},
\]
where, using $g_{\al,\bt}$ previously defined in \eqref{gab},
\[
h_2(t,x)  := -4\al\bt \big[\bt \delta \sin(2 \al y_1)+ \al \ga \sinh (2 \bt y_2) \big] h_{21} + g_{\al,\bt} h_{22},
\]
and
\begin{align*} 
h_{21} (t,x)& :=   -(\al^2+\bt^2) \cosh(\bt y_2) \sin(\al y_1)g_{\al,\bt} \\
&  -2 \al\bt (\al \cos(\al y_1) \cosh(\bt y_2)-\bt \sin(\al y_1) \sinh(\bt y_2)) (\bt \sin(2 \al y_1)+\al \sinh(2 \bt y_2)),
\end{align*}
\begin{align*} 
h_{22}(t,x) := &  \ g_{\al,\bt}\times \Big[\big(-\al \delta (\al^2+\bt^2) \cos(\al y_1) \cosh(\bt y_2) - \bt \ga (\al^2+\bt^2)\sin(\al y_1) \sinh(\bt y_2) \big)g_{\al,\bt} \\
&  -2\al\bt(\al^2+\bt^2)\sin(\al y_1) \cosh(\bt y_2)\big(\al\ga\sinh(2\bt y_2)+\delta\bt\sin(2\al y_1)\big)\\
&  -2\al\bt\big(-\al^2\delta\sin(\al y_1)\cosh(\bt y_2)+\al\bt\ga\cos(\al y_1)\sinh(\bt y_2)\\
&  -\bt\al\delta\cos(\al y_1)\sinh(\bt y_2)-\bt^2\ga\sin(\al y_1)\cosh(\bt y_2)\big)\times\big(\al\sinh(2\bt y_2)+\bt\sin(2\al y_1)\big)\\
&  -4\al^2\bt^2\big(\al\cos(\al y_1)\cosh(\bt y_2)-\bt\sin(\al y_1)\sinh(\bt y_2)\big)\times\big(\ga\cosh(2\bt y_2)+\delta\cos(2\al y_1)\big)\Big].
\end{align*}
Then,
\begin{equation}\label{lhs31}
B_{xt} + 2 (\mathcal M_{\al,\bt})_{t}B=\frac{4\sqrt{2}\al\bt\big[h_2 + 8\al^2\bt^2\tilde{\theta}\big]}{(\al^2+\bt^2+\al^2 \cosh(2 \bt y_2)-\bt^2 \cos(2 \al y_1))^3},
\end{equation}
and recalling that
\begin{align*}
& g_{\al,\bt}^2=(\al^2+\bt^2+\al^2 \cosh(2 \bt y_2)-\bt^2 \cos(2 \al y_1))^2\\
&  = (\al^2+\bt^2)^2 + (\al^4 \cosh(2 \bt y_2)^2+\bt^4 \cos(2 \al y_1)^2-2\al^2\bt^2\cosh(2 \bt y_2)\cos(2 \al y_1))\\
&  - 2(\al^2+\bt^2)(\al^2 \cosh(2 \bt y_2)-\bt^2 \cos(2 \al y_1)),
\end{align*}
collecting terms in \eqref{lhs31} and taking into account \eqref{Bsimpl} and \eqref{Btexp}, after some calculations we have
\begin{align*}
\hbox{r.h.s.  of  }  \eqref{lhs31} & =  \frac{4\sqrt{2}\al\bt}{(\al^2+\bt^2+\al^2 \cosh(2 \bt y_2)-\bt^2 \cos(2 \al y_1))^3} \times \\
&  \Big\{g_{\al,\bt}^2\times\Big[  2(\bt^2-\al^2)\big(\delta\al\cos(\al y_1)\cosh( \bt y_2)-\ga\bt\sin(\al y_1)\sinh(\bt y_2)\big)\\
&  + (\al^2+\bt^2)^2\big(\al\cos(\al y_1)\cosh( \bt y_2)-\bt\sin(\al y_1)\sinh(\bt y_2)\big)  \Big]\Big\}\\
&  = 2(\bt^2 -\al^2) (\tilde B_{\al,\bt})_t  +(\al^2 +\bt^2)^2 B_{\al,\bt}.
\end{align*}
\end{proof}

In what follows, and for the sake of simplicity, we use the notation $B= B_{\al,\bt}$ and $\mathcal M_t = (\mathcal M_{\al,\bt})_t$, if no confusion is present. 

\begin{prop}\label{GB} Let $B= B_{\al,\bt}$ be any mKdV breather. Then, for any  fixed $t\in \R$, $B$ satisfies the nonlinear stationary equation  
\be\label{EcB}
G[B]:= B_{(4x)} -2(\bt^2 -\al^2) (B_{xx} + B^3)  +(\al^2 +\bt^2)^2 B + 5 BB_x^2 + 5B^2 B_{xx} + \frac 32 B^5 =0.
\ee
\end{prop}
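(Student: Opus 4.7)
The plan is to combine the identity (\ref{ide1}) just established with the earlier relations from Lemma \ref{Id1}, and with the mKdV equation itself, in order to eliminate the two time-derivative quantities $\tilde B_t$ and $\mathcal M_t$ appearing on the right-hand side of (\ref{ide1}). The key observation is that (\ref{ide1}) is the only relation carrying the scaling parameters $(\al,\bt)$ explicitly; all the auxiliary identities are purely dynamical and can be inverted to express $\tilde B_t$, $\mathcal M_t$ and $B_{xt}$ in terms of spatial derivatives of $B$ alone.

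\textbf{Step 1: Express $\tilde B_t$ and $\mathcal M_t$ in terms of $B, B_x, B_{xx}$.} From (\ref{2nd}) I immediately get
\[
\tilde B_t \;=\; -B_{xx} - B^3.
\]
Substituting this into (\ref{First}) and solving for $2\mathcal M_t$ yields
\[
2\mathcal M_t \;=\; B_x^2 - 2 B B_{xx} - \tfrac{3}{2} B^4.
\]

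\textbf{Step 2: Express $B_{xt}$ in terms of spatial derivatives using mKdV.} Since $B$ solves (\ref{mKdV}), we have $B_t=-(B_{xx}+B^3)_x=-B_{xxx}-3B^2B_x$, hence differentiating once more in $x$,
\[
B_{xt} \;=\; -B_{(4x)} - 6 B B_x^2 - 3 B^2 B_{xx}.
\]

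\textbf{Step 3: Substitute into (\ref{ide1}) and collect.} Plugging the three expressions above into
\[
B_{xt} + 2\mathcal M_t\, B \;=\; 2(\bt^2-\al^2)\tilde B_t + (\al^2+\bt^2)^2 B,
\]
the left-hand side becomes $-B_{(4x)} - 6BB_x^2 - 3B^2 B_{xx} + B B_x^2 - 2B^2 B_{xx} - \tfrac{3}{2}B^5$, i.e.
\[
-B_{(4x)} - 5 B B_x^2 - 5 B^2 B_{xx} - \tfrac{3}{2} B^5,
\]
while the right-hand side becomes $-2(\bt^2-\al^2)(B_{xx}+B^3) + (\al^2+\bt^2)^2 B$. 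Multiplying by $-1$ and moving everything to one side gives exactly (\ref{EcB}).

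The entire argument is a direct algebraic consequence of (\ref{2nd}), (\ref{First}), (\ref{mKdV}) and (\ref{ide1}); no further explicit use of the breather profile is needed. The only real work lies in Lemma for (\ref{ide1}), which was the delicate trigonometric/hyperbolic computation already completed; once that identity is in hand, the derivation of (\ref{EcB}) is a short, mechanical reduction. The principal conceptual content is therefore the realization that the \emph{four} scalar quantities $B_{xt}$, $B_t$ (through mKdV), $\tilde B_t$ and $\mathcal M_t$ satisfy enough algebraic relations to eliminate all time derivatives and produce a purely stationary fourth-order elliptic equation in $x$ whose coefficients depend only on $(\al,\bt)$.
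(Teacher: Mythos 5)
Your argument is correct and is essentially the paper's own proof run in the opposite direction: the paper starts from the left-hand side of (\ref{EcB}) and reduces it to (\ref{ide1}) using (\ref{2nd}) and (\ref{First}), while you solve (\ref{2nd}), (\ref{First}) and the (once-differentiated) mKdV equation for $\tilde B_t$, $\mathcal M_t$ and $B_{xt}$ and substitute into (\ref{ide1}); the algebra checks out in both directions. No gap.
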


\begin{rem}
This identity can be seen as the \emph{nonlinear, stationary equation} satisfied by the breather profile, and therefore it is independent of time and translation parameters $x_1,x_2\in \R$. One can compare with the soliton profile $Q_c(x- ct -x_0)$, which satisfies the standard elliptic equation (\ref{ecQc}), obtained as the first variation of the $H^1$ Weinstein functional (\ref{H0}).
\end{rem}

\begin{proof}[Proof of Proposition \ref{GB}]
From (\ref{2nd}) and (\ref{First}), one has
\begin{align*}
\hbox{l.h.s.  of }  (\ref{EcB}) & = -( B_t + B^3)_{xx} + 2(\bt^2 -\al^2) \tilde B_t  +(\al^2 +\bt^2)^2 B    + 5 BB_x^2  + 5B^2 B_{xx} + \frac 32 B^5 \\
& =   - B_{tx} - BB_x^2 + 2B^2B_{xx} + 2(\bt^2 -\al^2) \tilde B_t  +(\al^2 +\bt^2)^2 B + \frac 32 B^5\\
& = - B_{tx} + B \Big[ \frac 12 B^4 + 2B \tilde B_t -2 \mathcal M_t \Big]  -2B^2 ( \tilde B_t + B^3) + \frac 32 B^5 \\
&  \qquad + 2(\bt^2 -\al^2) \tilde B_t  +(\al^2 +\bt^2)^2 B\\
& = - [ B_{tx} + 2 \mathcal M_t B ] + 2(\bt^2 -\al^2) \tilde B_t  +(\al^2 +\bt^2)^2  B \ = 0.
\end{align*}
In the last line we have used (\ref{ide1}).
\end{proof}

\begin{cor}\label{Cor32}
Let $B^0_{\al,\bt} = B^0_{\al,\bt}(t,x; 0,0)$ be any mKdV breather as in \eqref{breather}, and $x_1(t), x_2(t)\in \R$ two continuous functions, defined for all $t$ in a given interval. Consider the modified breather
\[
 B_{\al,\bt} (t,x):=B^0_{\al,\bt}(t,x; x_1(t),x_2(t)), \qquad (\hbox{cf. \eqref{BB}}). 
\]
Then $B_{\al,\bt} $ satisfies \eqref{EcB}, for all $t$ in the considered interval.
\end{cor}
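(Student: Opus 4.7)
The plan is to observe that equation \eqref{EcB} is a purely spatial identity: the operator $G[\cdot]$ involves only $B$ and its spatial derivatives $B_x, B_{xx}, B_{(4x)}$, together with the constants $\al, \bt$. No time derivative of $B$ appears, and no derivative with respect to the translation parameters $x_1, x_2$ either. Therefore, verifying \eqref{EcB} at a fixed instant $t$ reduces to checking it for the spatial profile $x\mapsto B_{\al,\bt}(t,x)$ alone.

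Fix an arbitrary $t$ in the given interval and set $\xi_1 := x_1(t)$, $\xi_2 := x_2(t)$, which are constants (not depending on any further time variable, since $t$ is frozen). By definition,
\[
B_{\al,\bt}(t,x) = B^0_{\al,\bt}(t, x; \xi_1, \xi_2).
\]
But for these frozen constants $\xi_1, \xi_2$, the right-hand side is exactly a breather of the form \eqref{BB} with fixed translation parameters, and hence Proposition \ref{GB} applies to it: the spatial profile $x\mapsto B^0_{\al,\bt}(t,x;\xi_1,\xi_2)$ satisfies $G[\,\cdot\,]=0$ pointwise in $x$.

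Since the spatial derivatives of $B_{\al,\bt}(t,\cdot)$ coincide with those of $B^0_{\al,\bt}(t,\cdot;\xi_1,\xi_2)$ at the chosen time $t$ (as continuous dependence of $x_1(t), x_2(t)$ on $t$ plays no role when $t$ is held fixed and only $x$-derivatives are taken), we obtain $G[B_{\al,\bt}](t,x) = 0$ for every $x\in\R$. Because $t$ was arbitrary in the interval, the conclusion follows.

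The only potential subtlety — and the reason the statement is worth recording as a corollary — is precisely this interchange of roles: Proposition \ref{GB} is stated for constant parameters, whereas here the parameters vary in time. The key point to stress is that the continuity hypothesis on $x_1(t), x_2(t)$ is not used in the algebraic verification itself, but rather guarantees that $B_{\al,\bt}(t,x)$ is a well-defined function to which pointwise-in-$t$ spatial calculus can legitimately be applied. No genuine obstacle arises, and this observation will be useful later when $x_1(t), x_2(t)$ play the role of modulation parameters in the stability argument.
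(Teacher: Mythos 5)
Your proof is correct and is essentially the paper's own argument: the paper also notes that \eqref{EcB} is invariant under spatial translations and involves no time derivative, so at each frozen instant $t$ the modified breather coincides with a genuine breather profile with constant shifts, to which Proposition \ref{GB} applies. Your write-up just spells out the freezing of $\xi_1=x_1(t)$, $\xi_2=x_2(t)$ more explicitly; there is no substantive difference.
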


\begin{proof}
A direct consequence of the invariance of the equation (\ref{EcB}) under spatial translations. Note that (\ref{EcB}) is satisfied even if $B_{\al,\bt}$ is not an exact solution of (\ref{mKdV}).
\end{proof}

\bigskip

\section{Spectral analysis}\label{4}

\medskip

Let $z=z(x)$ be a function to be specified in the following lines. Let $B=B_{\al,\bt}$ be any breather solution, with shift parameters $x_1,x_2$. Let us introduce the following fourth order linear operator:
\begin{align}\label{L1}
\mathcal L [z](x;t) &  :=  z_{(4x)}(x) -2(\bt^2 -\al^2) z_{xx}(x) +(\al^2 +\bt^2)^2 z(x)  + 5B^2 z_{xx}(x) + 10BB_x z_x(x) \nonu \\
&   \qquad  + \ \big[ 5B_x^2  +10 BB_{xx}  + \frac {15}2B^4 -6(\bt^2 -\al^2) B^2 \big] z(x).
\end{align}
In this section we describe the spectrum of this operator. More precisely, our main purpose is to find a suitable coercivity property, independently of the nature of scaling parameters. The main result of this section is contained in Proposition \ref{PropOrtog}. Part of the analysis carried out in this section has been previously introduced by Lax \cite{LAX1}, and Maddocks and Sachs \cite{MS}, so we follow their arguments adapted to the breather case, sketching several proofs.  

\begin{lem} $\mathcal L$ is a linear, unbounded operator in $L^2(\R)$, with dense domain $H^4(\R)$. Moreover, $\mathcal L$ is self-adjoint. 
\end{lem}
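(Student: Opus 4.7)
The plan is to recast $\mathcal{L}$ in a manifestly symmetric form, realize it as a symmetric, lower-order perturbation of a self-adjoint constant-coefficient operator, and then invoke the Kato--Rellich theorem. The key algebraic observation is
\[
5B^2 z_{xx}+10 B B_x z_x \ = \ \partial_x(5 B^2 z_x),
\]
so that \eqref{L1} can be rewritten in the manifestly formally self-adjoint shape
\[
\mathcal{L}[z] \ = \ z_{(4x)} - 2(\bt^2-\al^2)z_{xx} + \partial_x(5 B^2 z_x) + W(\cdot,t)\, z,
\]
with smooth real potential $W := (\al^2+\bt^2)^2 + 5 B_x^2 + 10 B B_{xx} + \tfrac{15}{2}B^4 - 6(\bt^2-\al^2)B^2$. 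Because $B=B_{\al,\bt}(t,\cdot)$ and all its $x$-derivatives belong to the Schwartz class for each fixed $t$, every nonconstant coefficient together with its derivatives is uniformly bounded on $\R$. In this form each summand is formally symmetric, so a direct integration by parts on Schwartz functions gives $\langle \mathcal{L}z,w\rangle = \langle z,\mathcal{L}w\rangle$ without boundary contributions, which settles the formal symmetry of $\mathcal{L}$ on a dense subspace.

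Next I would split $\mathcal{L} = \mathcal{L}_0 + \mathcal{L}_1$, where
\[
\mathcal{L}_0 \ := \ \partial_x^4 - 2(\bt^2-\al^2)\partial_x^2 + (\al^2+\bt^2)^2 I
\]
is the constant-coefficient principal part. A Plancherel argument identifies $\mathcal{L}_0$ on the Fourier side with multiplication by
\[
p(\xi) \ = \ \xi^4 + 2(\bt^2-\al^2)\xi^2 + (\al^2+\bt^2)^2 \ = \ (\xi^2+\bt^2-\al^2)^2 + 4\al^2\bt^2 \ \geq \ 4\al^2\bt^2 \ > \ 0,
\]
so $\mathcal{L}_0$ is a strictly positive, unbounded, self-adjoint operator on $L^2(\R)$ with natural domain $H^4(\R)$ (which is dense in $L^2(\R)$), and the graph norm $\|\mathcal{L}_0 z\|_{L^2}^2 + \|z\|_{L^2}^2$ is equivalent to $\|z\|_{H^4}^2$.

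Finally, the perturbation $\mathcal{L}_1 := \partial_x(5 B^2 z_x) + (W-(\al^2+\bt^2)^2)z$ is a symmetric differential operator of order at most $2$ with bounded Schwartz coefficients; hence $\|\mathcal{L}_1 z\|_{L^2} \leq C\|z\|_{H^2}$. Combined with the standard interpolation inequality $\|z\|_{H^2}^2 \leq \ve\|z\|_{H^4}^2 + C_\ve \|z\|_{L^2}^2$, valid for every $\ve>0$, this yields
\[
\|\mathcal{L}_1 z\|_{L^2} \ \leq \ \ve\,\|\mathcal{L}_0 z\|_{L^2} + C_\ve' \|z\|_{L^2}, \qquad z \in H^4(\R),
\]
so $\mathcal{L}_1$ is $\mathcal{L}_0$-bounded with relative bound zero. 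The Kato--Rellich theorem then delivers that $\mathcal{L} = \mathcal{L}_0+\mathcal{L}_1$ is self-adjoint and bounded below on the common domain $D(\mathcal{L}_0)=H^4(\R)$, as claimed; linearity and unboundedness are evident. The only nontrivial step is producing the divergence form of the second-order terms, which reveals the formal symmetry of the variable-coefficient part; once that identity is in hand, the rest is a textbook application of the standard perturbation machinery, so I do not expect serious obstacles.
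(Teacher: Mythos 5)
Your proof is correct, and it takes a more self-contained route than the paper's. The paper's own proof only verifies \emph{formal} symmetry: it integrates by parts to show $\int_\R w\,\mathcal L[z]=\int_\R \mathcal L[w]\,z$ for $z,w\in H^4(\R)$, and then asserts that the identification $D(\mathcal L^*)=D(\mathcal L)=H^4(\R)$ follows from ``standard spectral theory of unbounded operators with rapidly decaying coefficients,'' leaving the actual domain argument implicit. You supply exactly that missing piece: after exposing the divergence form $5B^2z_{xx}+10BB_xz_x=\partial_x(5B^2z_x)$ (which is also what makes the paper's integration by parts work), you realize $\mathcal L=\mathcal L_0+\mathcal L_1$ with $\mathcal L_0$ the constant-coefficient Fourier multiplier $p(\xi)=(\xi^2+\bt^2-\al^2)^2+4\al^2\bt^2>0$, self-adjoint on $H^4(\R)$, and show that the symmetric second-order perturbation $\mathcal L_1$ is $\mathcal L_0$-bounded with relative bound zero via the interpolation inequality $\|z\|_{H^2}^2\le\ve\|z\|_{H^4}^2+C_\ve\|z\|_{L^2}^2$, so Kato--Rellich gives self-adjointness on $D(\mathcal L_0)=H^4(\R)$. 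This buys a rigorous identification of the domain and, as a bonus, lower semiboundedness and the correct bottom of the essential spectrum ($4\al^2\bt^2$ when $\al>\bt$, $(\al^2+\bt^2)^2$ otherwise), consistent with the paper's subsequent lemma on the continuous spectrum; the paper's approach is shorter but defers the analytic content to a citation-free appeal to standard theory.
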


It is a surprising fact that $\mathcal L$ is actually self-adjoint, due to the non constant terms appearing in the definition of $\mathcal L$. From standard spectral theory of unbounded operators with rapidly decaying coefficients, it is enough to prove that $\mathcal L^* =\mathcal L$ in $H^4(\R)^2$.

\begin{proof}
Let $z,w\in H^4(\R)$. Integrating by parts, one has
\begin{align*}
\int_\R w \mathcal L[z]  =& \int_\R  w\big[ z_{(4x)} -2(\bt^2 -\al^2) z_{xx} +(\al^2 +\bt^2)^2 z  + 5B^2 z_{xx}  + 10BB_x z_x \big] \\
&    + \int_\R \big[ 5B_x^2  +10 BB_{xx}  + \frac {15}2B^4 -6(\bt^2 -\al^2) B^2 \big] z w\\
=&  \int_\R  \big[ w_{(4x)} - 2(\bt^2 -\al^2) w_{xx} +(\al^2 +\bt^2)^2 w  + (5B^2w)_{xx}   - (10BB_x w)_x  \big]z \\
&    + \int_\R \big[ 5B_x^2  +10 BB_{xx}  + \frac {15}2B^4 -6(\bt^2 -\al^2) B^2 \big] z w  \;  = \int_\R  \mathcal L[w] z.
\end{align*}
Finally, it is clear that $D(\mathcal L^*)$ can be identified with $D(\mathcal L)=H^4(\R)$.
\end{proof}
A consequence of the result above is the fact that the spectrum of $\mathcal L$ is real-valued. Furthermore, the following result describes the continuous spectrum of $\mathcal L$.

\begin{lem} Let $\al,\bt>0$. The operator $\mathcal L$ is a compact perturbation of the constant coefficients operator
\[
\mathcal L_{0} [z]:= z_{(4x)} -2(\bt^2 -\al^2) z_{xx} +(\al^2 +\bt^2)^2 z. 
\]
In particular, the continuous spectrum of $\mathcal L$ is the closed interval $[(\al^2 +\bt^2)^2,+\infty)$ in the case $\beta\geq \al$, and $[ 4\al^2 \bt^2 ,+\infty)$ in the case $\beta< \al$. No embedded eigenvalues are contained in this region.
\end{lem}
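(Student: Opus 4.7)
The plan is to split the claim into three pieces: (i) relative compactness of $\mathcal L-\mathcal L_0$, (ii) explicit computation of the spectrum of $\mathcal L_0$, and (iii) the absence of eigenvalues embedded in the continuous spectrum.

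For (i), I would write $V := \mathcal L-\mathcal L_0 = 5B^2\partial_x^2 + 10 B B_x\, \partial_x + \big[5B_x^2 + 10 B B_{xx} + \tfrac{15}{2}B^4 - 6(\bt^2-\al^2)B^2\big]$. Since $B$ and all its derivatives are Schwartz in $x$ (for each fixed $t$), the coefficients of $V$ are smooth and rapidly decaying. Each term of $V$ then maps $H^4(\R)$ compactly into $L^2(\R)$: fixing $M$ large enough so that $(\mathcal L_0+M)^{-1}$ is bounded from $L^2$ into $H^4$, the composition $V(\mathcal L_0+M)^{-1}$ factors through multiplication by a Schwartz function, which is compact from $H^k$ into $L^2$ for any $k\geq 0$ by the Rellich–Kondrachov theorem combined with decay at infinity. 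Hence $V$ is $\mathcal L_0$-compact and, consequently, $\mathcal L$ is a relatively compact perturbation of $\mathcal L_0$.

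For (ii), I apply the Fourier transform to reduce $\mathcal L_0$ to the multiplication operator $\hat z(\xi)\mapsto p(\xi)\hat z(\xi)$ with symbol $p(\xi) = \xi^4 + 2(\bt^2-\al^2)\xi^2 + (\al^2+\bt^2)^2$. Its spectrum is the closed range $[p_*,+\infty)$ of $p$ on $\R$. Writing $\eta=\xi^2\geq 0$, the parabola $\eta\mapsto \eta^2 + 2(\bt^2-\al^2)\eta + (\al^2+\bt^2)^2$ has vertex at $\eta=\al^2-\bt^2$. If $\bt\geq\al$ the vertex lies in $\eta\leq 0$ and the minimum over $\eta\geq 0$ is $p(0)=(\al^2+\bt^2)^2$; if $\bt<\al$ the minimum is at $\eta=\al^2-\bt^2$ with value $(\al^2+\bt^2)^2-(\al^2-\bt^2)^2=4\al^2\bt^2$. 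Combined with step (i), Weyl's theorem on invariance of the essential spectrum under relatively compact symmetric perturbations yields $\sigma_{\mathrm{ess}}(\mathcal L)=\sigma_{\mathrm{ess}}(\mathcal L_0)=[p_*,+\infty)$, with the value of $p_*$ as claimed.

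For (iii), I would rule out embedded eigenvalues by an asymptotic ODE argument. Suppose $\mathcal L z=\lambda z$ for some $\lambda\in[p_*,+\infty)$ and $z\in L^2(\R)$. Since the coefficients of $V$ are Schwartz, Levinson's asymptotic theorem produces four linearly independent solutions of $\mathcal L z=\lambda z$ at $x\to +\infty$ that are asymptotic, up to lower-order corrections, to the four characteristic exponentials of the constant-coefficient equation $\mathcal L_0 z=\lambda z$. Solving the biquadratic $\mu^4+2(\bt^2-\al^2)\mu^2+((\al^2+\bt^2)^2-\lambda)=0$ shows that for $\lambda>p_*$ the four roots consist of two purely imaginary modes (oscillating at nonzero frequency) and two real modes (one exponentially growing and one exponentially decaying). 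Any $L^2$ solution must therefore reduce to a scalar multiple of the unique decaying mode at $+\infty$, and likewise at $-\infty$; imposing both matching conditions simultaneously is a codimension-two constraint that fails for a generic fourth-order problem, and in the present case it can be ruled out cleanly by a virial/commutator computation exploiting the self-adjointness of $\mathcal L$ and the exponential decay of the coefficients of $V$, in the spirit of Kato's classical absence-of-embedded-eigenvalue result for Schrödinger operators. I expect this last step—making the asymptotic matching rigorous in the fourth-order setting, especially at the boundary value $\lambda=p_*$ where two characteristic roots collide—to be the main technical obstacle; the bookkeeping of oscillatory versus exponential modes is significantly more delicate than in the second-order case.
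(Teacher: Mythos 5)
Your steps (i) and (ii) are correct and are precisely what the paper invokes: the published proof consists of one sentence citing Weyl's theorem plus the remark that the potentials are rapidly decreasing, so your explicit verification that $V=\mathcal L-\mathcal L_0$ is $\mathcal L_0$-compact and your Fourier-side minimization of the symbol $p(\xi)=\xi^4+2(\bt^2-\al^2)\xi^2+(\al^2+\bt^2)^2$ (vertex analysis in $\eta=\xi^2$) supply exactly the details the authors skip. That part is fine.

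Step (iii) is where both you and the paper stop short of a proof, but your sketch contains a concrete error and a non-argument. First, the mode bookkeeping: writing the characteristic equation for $e^{\mu x}$ correctly as $\mu^4-2(\bt^2-\al^2)\mu^2+(\al^2+\bt^2)^2-\lambda=0$ (you have a sign slip in the middle term) and setting $s=\mu^2$ gives $s=(\bt^2-\al^2)\pm\sqrt{\lambda-4\al^2\bt^2}$. When $\bt<\al$ and $4\al^2\bt^2<\lambda<(\al^2+\bt^2)^2$, both values of $s$ are negative, so \emph{all four} characteristic exponents are purely imaginary: there is no decaying mode at all, contrary to your ``two imaginary, two real'' description. (This actually makes that window easy: a nonzero solution is asymptotic to a nonzero combination of bounded oscillations and cannot be $L^2$.) Your claimed root structure holds only for $\lambda>(\al^2+\bt^2)^2$, and there is a second root collision at $\lambda=(\al^2+\bt^2)^2$ when $\bt<\al$ that you do not flag. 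Second, ``a codimension-two constraint that fails for a generic fourth-order problem'' proves nothing about this specific operator --- genericity cannot exclude an eigenvalue of the actual $\mathcal L$ --- so the entire weight of (iii) rests on the unexecuted Kato-type virial/commutator argument. To be fair, the paper itself offers no more than the assertion that rapid decay of the potentials precludes embedded eigenvalues, so your proposal is no less complete than the published proof on this point; but a genuine proof would require carrying out the positive-commutator (or asymptotic-integration plus matching) argument uniformly over the essential spectrum, including the threshold values where characteristic roots collide.
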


\begin{proof}
This result is a consequence of the Weyl Theorem on continuous spectrum.  Let us note that  the nonexistence of embedded eigenvalues (or resonances) is consequence of the rapidly decreasing character of the potentials involved in the definition of $\mathcal L$.  
\end{proof}

\begin{rem}
Note that the condition $\al=\bt$ is equivalent to the identity $\partial_\bt E[B] =0.$ Solitons do not satisfy this last indentity.
\end{rem}

\medskip

We introduce now two directions associated to spatial translations. Let $B_{\al,\beta}$ as defined in (\ref{BB}). We define 
\be\label{B12}
 B_1(t ; x_1,x_2) := \partial_{x_1} B_{\al,\bt}(t ; x_1, x_2),\quad \hbox{ and } \quad  B_2(t ; x_1,x_2): =\partial_{x_2} B_{\al,\bt}(t ;  x_1, x_2).
\ee
It is clear that, for all $t\in \R$ $\al,\bt>0$ and $x_1,x_2\in \R$, both $B_1$ and $B_2$ are real-valued functions in the  Schwartz class, exponentially decreasing in space. Moreover, it is not difficult to see that they are \emph{linearly independent} as functions of the $x$-variable, for all time $t$ fixed.

\begin{lem}\label{B1B2} For each $t\in \R$, one has
\[
\ker \mathcal L =\spawn \big\{ B_1(t;x_1,x_2), B_2(t;x_1,x_2)\big\}.
\]
\end{lem}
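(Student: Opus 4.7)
The plan is to show first that $B_1$ and $B_2$ lie in $\ker\mathcal L$ by exploiting the fact that $\mathcal L$ is precisely the linearization of the nonlinear stationary operator $G$ from \eqref{EcB} at $B$, and then to argue that the kernel has dimension at most two by an asymptotic (ODE) analysis at $\pm\infty$.

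\textbf{Step 1: inclusion $\spawn\{B_1,B_2\}\subseteq \ker\mathcal L$.} A direct formal Fr\'echet differentiation of $G[B]$ in \eqref{EcB} with respect to $B$ produces exactly the expression for $\mathcal L[z]$ in \eqref{L1}: the terms $B_{(4x)}, -2(\bt^2-\al^2)B_{xx}, (\al^2+\bt^2)^2 B$ are linear and contribute their corresponding pieces; $-2(\bt^2-\al^2)B^3$ contributes $-6(\bt^2-\al^2)B^2z$; $5BB_x^2$ contributes $5B_x^2 z+10BB_x z_x$; $5B^2 B_{xx}$ contributes $10BB_{xx} z+5B^2 z_{xx}$; and $\tfrac{3}{2}B^5$ contributes $\tfrac{15}{2}B^4 z$. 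Hence $\mathcal L = G'[B]$. By Corollary \ref{Cor32}, $G\bigl[B_{\al,\bt}(t,x;x_1,x_2)\bigr]\equiv 0$ as a function of $(x_1,x_2)$ in a neighborhood of any chosen shift parameters, so differentiating this identity with respect to $x_1$ (resp.\ $x_2$) and using the chain rule yields $\mathcal L[B_1]=0$ (resp.\ $\mathcal L[B_2]=0$). Since $B_1$ and $B_2$ are Schwartz, they belong to $H^4(\R)=D(\mathcal L)$.

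\textbf{Step 2: linear independence of $B_1$ and $B_2$.} It suffices to evaluate at $t=0$, $x_1=x_2=0$, where the arguments of the arctan in \eqref{BB} simplify to $y_1=y_2=x$. A short computation of the two partial derivatives of the argument of $\arctan$ shows that $B_1$ is (essentially) $\partial_x\bigl[\cos(\al x)/\cosh(\bt x)\bigr]$ up to regular factors, while $B_2$ is (essentially) $\partial_x\bigl[\sin(\al x)\tanh(\bt x)/\cosh(\bt x)\bigr]$. These are linearly independent as functions of $x$ (for instance, one is even around $0$ after a proper rescaling while the other is odd, or equivalently the Wronskian computed at the origin is nonzero for $\al,\bt>0$). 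Linear independence at one value of $t$ suffices.

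\textbf{Step 3: dimension of $\ker\mathcal L$ is at most two.} This is the heart of the argument. The equation $\mathcal L[z]=0$ is a linear fourth-order ODE with smooth coefficients which, since $B$ and all its derivatives decay exponentially as $|x|\to\infty$, converge to those of the constant-coefficient operator $\mathcal L_0$. The characteristic polynomial of $\mathcal L_0$ is
\begin{equation*}
\la^4-2(\bt^2-\al^2)\la^2+(\al^2+\bt^2)^2=0,
\end{equation*}
whose roots are $\la=\pm(\bt\pm i\al)$, as one verifies by noting $(\bt\pm i\al)^2=(\bt^2-\al^2)\pm 2i\al\bt$ and that $(\bt^2-\al^2)^2-(\al^2+\bt^2)^2=-4\al^2\bt^2$. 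Hence, by the standard asymptotic theory of linear ODEs (Levinson's theorem, or direct variation-of-constants with the decaying coefficients), the space of solutions of $\mathcal L[z]=0$ decaying as $x\to+\infty$ has dimension exactly two (spanned by functions behaving like $e^{(-\bt\pm i\al)x}$), and similarly at $x\to-\infty$. Any element of $\ker\mathcal L\subset H^4(\R)$ must belong to the intersection of these two two-dimensional subspaces of the four-dimensional classical solution space. Consequently $\dim\ker\mathcal L\le 2$.

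\textbf{Conclusion and main obstacle.} Combining Step 1 and Step 2 gives $\dim\ker\mathcal L\ge 2$, and Step 3 gives $\dim\ker\mathcal L\le 2$, so $\ker\mathcal L=\spawn\{B_1,B_2\}$. The main obstacle is Step 3: one must be careful that the asymptotic analysis really produces two linearly independent decaying solutions at $+\infty$ (and similarly at $-\infty$), given that the four characteristic roots come in two complex-conjugate pairs with the same real parts $\pm\bt$; this is exactly the content of the Levinson-type theorem, which applies here because the perturbing coefficients (those involving $B$) decay exponentially, hence are integrable and satisfy the required dichotomy condition.
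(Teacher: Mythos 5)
Your proposal follows essentially the same route as the paper: $B_1,B_2\in\ker\mathcal L$ by differentiating $G[B]\equiv 0$ in $x_1,x_2$, and the upper bound $\dim\ker\mathcal L\le 2$ by observing that the asymptotic operator $\mathcal L_0$ has characteristic roots $\pm(\bt\pm i\al)$, so only a two-dimensional subspace of solutions decays at $+\infty$; your Step 3 merely makes explicit the Levinson-type perturbation argument that the paper leaves implicit. One small correction to Step 2: at $t=0$, $x_1=x_2=0$ both $B_1$ and $B_2$ are \emph{odd} in $x$ (each is $\partial_x$ of an even function), and by the explicit formula \eqref{Wsimpl} the Wronskian $\det W[B_1,B_2]$ is proportional to $\al\sinh(2\bt x)-\bt\sin(2\al x)$, which \emph{vanishes} at the origin; so neither the parity argument nor the value of the Wronskian at $x=0$ gives independence. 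Linear independence is still immediate, since that Wronskian is not identically zero (it is nonzero for all large $x$), which is all you need.
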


\begin{proof}
From Proposition \ref{GB}, one has that $\partial_{x_1}G[B] =\partial_{x_2}G[B] \equiv 0$. Writing down these identities, we obtain
\be\label{LB12}
\mathcal L [B_1](t;x_1,x_2) = \mathcal L [B_{2}](t;x_1,x_2) =0,
\ee
with $\mathcal L$ the linearized operator defined in (\ref{L1}) and $B_1, B_2$ defined in (\ref{B12}). A direct analysis involving ordinary differential equations shows that the null space of $\mathcal L_{0}$ is spawned by functions of the type
\[
e^{\pm \bt x } \cos(\al x), \quad e^{\pm \bt x } \sin(\al x),Ê\quad \al,\bt>0,
\]
(note that this set is linearly independent). Among these four functions, there are only two $L^2$-integrable ones in the semi-infinite line $[0,+\infty)$. Therefore, the null space of $\mathcal L|_{H^4(\R)}$ is spanned by at most two $L^2$-functions. Finally, comparing with (\ref{LB12}), we have the desired conclusion. 
\end{proof}

We consider now the natural modes associated to the scaling parameters, which are the best candidates to generate negative directions for the related quadratic form defined by $\mathcal L$. Recall the definitions of $\Lambda_\al B_{\al,\bt} $ and $\Lambda_\beta B_{\al,\bt} $ introduced in (\ref{LAB}). For these two directions, one has the following
\begin{lem}\label{Scaling} 
Let $B =B_{\al,\bt}$ be any mKdV breather. Consider the scaling directions $\Lambda_\al B$ and $\Lambda_\bt B$ introduced in \eqref{LAB}. Then
\be\label{Pos}
\int_\R  \Lambda_\al B \, \mathcal L [\Lambda_\al B]  =  32 \al^2\bt  >0,
\ee
and
\be\label{Neg}
\int_\R  \Lambda_\bt B\, \mathcal L [\Lambda_\bt B]  =  -32 \al^2 \bt <0.
\ee
\end{lem}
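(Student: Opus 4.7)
The plan is to exploit the fact that the operator $\mathcal L$ is precisely the linearization $DG[B]$ of the nonlinear stationary operator $G$ from Proposition \ref{GB}, which turns the computation of $\mathcal L[\Lambda_\alpha B]$ and $\mathcal L[\Lambda_\bt B]$ into a one-line differentiation exercise.

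First I would verify (by direct inspection of the formulas) that
\[
\mathcal L[z] = DG[B] z,
\]
where $G$ is the nonlinear operator from \eqref{EcB}: linearizing each term of $G$ in the perturbation direction $z$ and collecting derivatives reproduces exactly the definition of $\mathcal L$ in \eqref{L1}. Since $B_{\alpha,\beta}$ satisfies $G[B_{\alpha,\beta}]=0$ as an identity in the parameters $(\alpha,\beta)$, differentiating with respect to $\alpha$ (resp.\ $\beta$) and isolating the explicit parameter dependence in the coefficients $-2(\bt^2-\al^2)$ and $(\al^2+\bt^2)^2$ yields
\[
\mathcal L[\Lambda_\al B] + 4\al (B_{xx}+B^3) + 4\al(\al^2+\bt^2) B = 0,
\]
\[
\mathcal L[\Lambda_\bt B] - 4\bt (B_{xx}+B^3) + 4\bt(\al^2+\bt^2) B = 0.
\]
Using \eqref{2nd} to substitute $B_{xx}+B^3 = -\tilde B_t$, these become
\[
\mathcal L[\Lambda_\al B] = 4\al \tilde B_t - 4\al(\al^2+\bt^2) B, \qquad \mathcal L[\Lambda_\bt B] = -4\bt \tilde B_t - 4\bt(\al^2+\bt^2) B.
\]

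Next I would take the $L^2$-inner product with $\Lambda_\al B$ and $\Lambda_\bt B$ respectively. The $B$-terms are handled directly by the generalized Weinstein identities from Corollary \ref{WCcor}: $\int_\R B\, \Lambda_\al B = 0$ and $\int_\R B\, \Lambda_\bt B = 4$. For the $\tilde B_t$-terms, using again $\tilde B_t = -(B_{xx}+B^3)$ and integration by parts,
\[
\int_\R \Lambda_\al B\, \tilde B_t = -\int_\R \Lambda_\al B\, (B_{xx}+B^3) = \tfrac12 \partial_\al \!\int_\R B_x^2 - \tfrac14 \partial_\al \!\int_\R B^4 = \partial_\al E[B],
\]
and analogously $\int_\R \Lambda_\bt B\, \tilde B_t = \partial_\bt E[B]$. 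Invoking Corollary \ref{WCcor2} with $|E[Q]|=\tfrac23$, so that $\partial_\al E[B] = 8\al\bt$ and $\partial_\bt E[B] = 4(\al^2-\bt^2)$, gives
\[
\int_\R \Lambda_\al B\, \mathcal L[\Lambda_\al B] = 4\al\cdot 8\al\bt = 32\al^2\bt,
\]
\[
\int_\R \Lambda_\bt B\, \mathcal L[\Lambda_\bt B] = -4\bt\cdot 4(\al^2-\bt^2) - 4\bt(\al^2+\bt^2)\cdot 4 = -32\al^2\bt,
\]
exactly as claimed.

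The only mildly delicate point is the first step: checking that $\mathcal L$ really coincides with $DG[B]$ term by term. Once this is in hand, the remainder of the argument is essentially algebraic, reducing the two spectral identities to the already-computed Weinstein-type quantities $\partial_{\al}M[B]$, $\partial_{\bt}M[B]$, $\partial_{\al}E[B]$, $\partial_{\bt}E[B]$. In particular, the sign structure of the two quadratic forms reflects precisely the $L^2$-critical character of $\al$ and the stable character of $\bt$ highlighted in the remark after Corollary \ref{WCcor}.
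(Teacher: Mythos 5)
Your proposal is correct and follows essentially the same route as the paper: differentiate the stationary identity $G[B_{\al,\bt}]=0$ in $\al$ and $\bt$, recognize $\mathcal L=DG[B]$ so that $\mathcal L[\Lambda_\al B]$ and $\mathcal L[\Lambda_\bt B]$ are explicit combinations of $B_{xx}+B^3$ and $B$, and then pair with the scaling directions using the Weinstein identities $\partial_\al M[B]=0$, $\partial_\bt M[B]=4$, and $\partial_{\al,\bt}E[B]$ from Corollaries \ref{WCcor} and \ref{WCcor2}. The intermediate substitution $B_{xx}+B^3=-\tilde B_t$ is an unnecessary (but harmless) detour, since you immediately undo it when expressing $\int \Lambda_\al B\,\tilde B_t$ as $\partial_\al E[B]$; all the constants check out.
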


\begin{proof}
 From (\ref{GB}), we get after derivation with respect to $\al$ and $\beta$,
\[
\mathcal L [\Lambda_\al B ] =  -4\al [ B_{xx} + B^3 +(\al^2+\bt^2)B], \qquad \mathcal L [\Lambda_\bt B]  =  4\bt [ B_{xx} + B^3 - (\al^2+\bt^2)B].
\]
We deal with the first identity above. Note that from (\ref{LAB1}), (\ref{E1}) and (\ref{LAB3}),
\[
\int_\R  \Lambda_\al B\, \mathcal L [\Lambda_\al B] =  -4\al  \int_\R [ B_{xx} + B^3 +(\al^2+\bt^2)B] \Lambda_\al B \nonu  =  4\al \partial_\al E[B] >0.
\]
This last identity proves (\ref{Pos}).  Following a similar analysis, and since $E[Q] = -\frac 13 M[Q] = -\frac 23$ (cf. (\ref{MQc})-(\ref{EQc})), one has from (\ref{LAB2}) and (\ref{LAB3}),
\begin{align*}
\int_\R  \Lambda_\bt B \, \mathcal L [\Lambda_\bt B] & =  4\bt  \int_\R [ B_{xx} + B^3 - (\al^2+\bt^2)B] \Lambda_\bt B  \\
& = -4\bt \partial_\bt E[B] - 16\bt (\al^2 +\bt^2) \\
& =  24\bt (\bt^2-\al^2) E[Q] - 16\bt (\al^2 +\bt^2) \  =  -32 \al^2 \bt <0.
\end{align*}
Therefore, (\ref{Neg}) is proved.
\end{proof}
A direct consequence of the identities  above and Corollary \ref{WCcor} is the following result:  
\begin{cor}\label{B0cor}
With the notation of Lemma \ref{Scaling}, let
\be\label{B0}
B_0 :=  \frac{\al\Lambda_{\bt} B + \bt \Lambda_\al B}{8\al\bt (\al^2 +\bt^2)}.
\ee
Then $B_0$ is Schwartz and satisfies $\mathcal L[B_0] = - B$, 
\be\label{negB0}
\int_\R B_0 B = \frac{1}{2\bt (\al^2 +\bt^2)} > 0, \quad \hbox{ and }\quad 
 \frac 12\int_\R B_0\mathcal L [B_0]  = - \frac{1}{4 \bt(\al^2 +\bt^2)}<0.
\ee
\end{cor}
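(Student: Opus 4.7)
The proof is essentially a direct assembly of the identities already collected in Lemma \ref{Scaling} and Corollary \ref{WCcor}. The plan is to verify the four claims in order, without any new analytical input beyond rearranging those formulas.

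First, I would note that $B_0$ is Schwartz because it is a fixed linear combination of $\Lambda_\al B$ and $\Lambda_\bt B$, both of which belong to the Schwartz class by Corollary \ref{WCcor}. Next, I would establish $\mathcal L[B_0] = -B$ using the two explicit images computed in the proof of Lemma \ref{Scaling}, namely
\[
\mathcal L[\Lambda_\al B] = -4\al\big[B_{xx}+B^3+(\al^2+\bt^2)B\big], \qquad \mathcal L[\Lambda_\bt B] = 4\bt\big[B_{xx}+B^3-(\al^2+\bt^2)B\big].
\]
Forming the combination $\al\,\mathcal L[\Lambda_\bt B]+\bt\,\mathcal L[\Lambda_\al B]$, the $B_{xx}$ and $B^3$ terms cancel and one is left with $-8\al\bt(\al^2+\bt^2)B$. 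Dividing by the normalization in \eqref{B0} produces $\mathcal L[B_0]=-B$ exactly.

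For the third claim, I would pair $B_0$ against $B$ and invoke the two generalized Weinstein identities $\int_\R B\,\Lambda_\al B = \partial_\al M[B] = 0$ and $\int_\R B\,\Lambda_\bt B = \partial_\bt M[B] = 4$ from Corollary \ref{WCcor}. Only the $\Lambda_\bt B$ contribution survives, yielding $\int_\R B_0 B = \frac{4\al}{8\al\bt(\al^2+\bt^2)} = \frac{1}{2\bt(\al^2+\bt^2)}>0$.

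Finally, for the last identity I would simply use self-adjointness (equivalently, the computation just done), substituting $\mathcal L[B_0]=-B$:
\[
\frac 12 \int_\R B_0\,\mathcal L[B_0] = -\frac 12 \int_\R B_0 B = -\frac{1}{4\bt(\al^2+\bt^2)}<0.
\]
No step is a genuine obstacle: all the hard work (computing $\mathcal L[\Lambda_\al B]$, $\mathcal L[\Lambda_\bt B]$, and the mass derivatives) has already been done upstream. The only mild subtlety is bookkeeping the sign and the factor $8\al\bt(\al^2+\bt^2)$ in the definition of $B_0$ so that the $B_{xx}$ and $B^3$ terms cancel cleanly; this is the reason for the specific asymmetric weights $\al$ in front of $\Lambda_\bt B$ and $\bt$ in front of $\Lambda_\al B$.
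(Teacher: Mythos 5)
Your proposal is correct and is precisely the argument the paper intends: the corollary is stated as "a direct consequence of the identities above and Corollary \ref{WCcor}," and your assembly of $\mathcal L[\Lambda_\al B]$, $\mathcal L[\Lambda_\bt B]$ from the proof of Lemma \ref{Scaling} together with \eqref{LAB1}--\eqref{LAB2} is exactly that consequence. All signs and normalizations check out.
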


\begin{rem}
In other words, $B_0$ is also a negative direction. Moreover, it is not orthogonal to the breather itself. Note additionally that the constant involved in (\ref{negB0}) is independent of time.
\end{rem}

It turns out that the most important consequence of (\ref{Pos}) is the fact that $\mathcal L$ possesses, for all time, \emph{only one negative eigenvalue}. Indeed, in order to prove that result, we follow the Greenberg and Maddocks-Sachs strategy \cite{Gr,MS}, applied this time to the linear, \emph{oscillatory} operator $\mathcal L$. More specifically, we will use the following

\begin{lem}[Uniqueness criterium, see also \cite{Gr,MS}]\label{Wr1} Let $B=B_{\al,\bt}$ be any mKdV breather, and $B_1,B_2$ the corresponding kernel of the operator $\mathcal L$. Then $\mathcal L$ has 
\[
\sum_{x\in \R} \dim \ker W[B_1, B_2] (t; x)
\] 
negative eigenvalues, counting multiplicity. Here, $W$ is the Wronskian matrix of the functions $B_1$ and $B_2$, 
\be\label{WM}
W[B_1, B_2] (t; x) := \left[ \begin{array}{cc} B_1 & B_2 \\  (B_1)_x & (B_2)_x  \end{array} \right] (t,x).
\ee
\end{lem}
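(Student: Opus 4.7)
The strategy is to adapt the oscillation theorem of Greenberg \cite{Gr} (in the form used by Maddocks--Sachs \cite{MS} for the KdV $N$-soliton case) to the present fourth-order, \emph{oscillatory}-coefficient operator $\mathcal{L}$. The overall idea is to count the negative eigenvalues of $\mathcal{L}$ by continuously deforming from a manifestly positive reference operator and tracking each eigenvalue as it crosses zero, identifying every crossing with a degeneration of the kernel's Wronskian matrix.

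First, I would introduce the homotopy
$$
\mathcal{L}_s := \mathcal{L}_0 + s(\mathcal{L}-\mathcal{L}_0), \qquad s\in[0,1],
$$
with $\mathcal{L}_0$ the constant-coefficient operator introduced above. A Fourier computation shows $\mathcal{L}_0$ is strictly positive, with symbol bounded below by $\min\{(\alpha^2+\beta^2)^2, 4\alpha^2\beta^2\} > 0$, so in particular $\mathcal{L}_0$ has no negative eigenvalues. Since $\mathcal{L}_s - \mathcal{L}_0$ is a symmetric perturbation with Schwartz coefficients (as $B$ is Schwartz in $x$), each $\mathcal{L}_s$ is self-adjoint with essential spectrum in $[\min\{(\alpha^2+\beta^2)^2, 4\alpha^2\beta^2\}, +\infty)$, and its discrete eigenvalues below depend continuously on $s$ by Kato perturbation theory. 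Consequently, each negative eigenvalue of $\mathcal{L} = \mathcal{L}_1$ is produced by an eigenvalue of $\mathcal{L}_s$ passing through zero at some $s_* \in (0,1]$.

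Next, I would identify each such crossing with a Wronskian degeneration. At $s = s_*$ a new element $\phi_* \in H^4$ enters $\ker \mathcal{L}_{s_*}$; equivalently, the two-dimensional ``stable subspaces'' of $L^2$-decaying solutions of $\mathcal{L}_s z = 0$ at $+\infty$ and at $-\infty$ acquire a new transversal intersection. At the level of the $2\times 2$ Wronskian matrix of a canonical basis of solutions decaying at $-\infty$, evaluated against the analogous basis at $+\infty$, this intersection registers as a new zero on $\R$, and its multiplicity equals the dimension of the intersection. At $s = 1$, by Lemma \ref{B1B2}, this basis can be taken to be $\{B_1, B_2\}$ itself (both span $\ker \mathcal{L}$ and decay at both infinities). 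Thus the cumulative record of all crossings is encoded exactly by the zeros of $W[B_1, B_2](t;x)$ on $\R$, weighted by $\dim \ker W[B_1, B_2](t;x)$, giving the stated identity.

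\textbf{Main obstacle.} The hardest part will be rigorously justifying the bijection between zero-eigenvalue crossings and Wronskian degenerations in this oscillatory-coefficient regime. For the KdV $N$-soliton case of \cite{MS}, the coefficients are positive and monotonically decaying, so Greenberg's framework applies directly; here the breather coefficients $B,B_x,B^2,\ldots$ genuinely oscillate, raising the possibility of accumulating or cancelling Wronskian zeros. This is controlled by the exponential decay of $B$: oscillations are confined to a bounded effective region and the Wronskian is analytic with finite zero set. A separate subtlety is the degenerate crossing where two eigenvalues pass through zero simultaneously, which corresponds to $W$ dropping rank by two at a single point; this is precisely why the count must be weighted by $\dim \ker W$ rather than simply by the cardinality of the zero set.
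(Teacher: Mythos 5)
The paper does not actually prove this lemma: it invokes \cite[Theorem 2.2]{Gr} for the finite-interval case and refers to \cite{MS,HPZ} for the (by now standard) extension to the whole line. Your proposal instead attempts to reprove the oscillation theorem itself, via the homotopy $\mathcal L_s=\mathcal L_0+s(\mathcal L-\mathcal L_0)$ and a spectral-flow count of eigenvalues crossing zero. The set-up is sound: $\mathcal L_0$ is indeed strictly positive (its symbol $\xi^4-2(\bt^2-\al^2)\xi^2+(\al^2+\bt^2)^2$ is bounded below by $\min\{(\al^2+\bt^2)^2,4\al^2\bt^2\}>0$), the perturbation has Schwartz coefficients, and every negative eigenvalue of $\mathcal L_1=\mathcal L$ must arise from a net downward crossing along the path. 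This is in fact the skeleton of the modern Maslov-index proofs of such results.

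The genuine gap is in the middle step, where you assert that ``the cumulative record of all crossings is encoded exactly by the zeros of $W[B_1,B_2](t;x)$.'' The crossings occur along the $s$-homotopy (values $s_*\in(0,1]$ where $\ker\mathcal L_{s_*}\neq\{0\}$), whereas the Wronskian degenerates along the spatial variable $x$ for the fixed operator $\mathcal L_1$; these are two different one-parameter families of degeneracies, and the equality of their counts \emph{is} the content of the theorem — it cannot simply be read off from the homotopy set-up. Closing this gap requires either Greenberg's actual oscillation argument in $x$ (a shooting/conjugate-point count), or a two-parameter argument computing the Maslov index of the path of Lagrangian subspaces around the boundary of the $(s,x)$ rectangle and using its homotopy invariance; neither appears in your sketch. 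Relatedly, you misdescribe the object $W[B_1,B_2]$: it is not a pairing of a basis of solutions decaying at $-\infty$ against one decaying at $+\infty$. Here $B_1,B_2$ span the full kernel of $\mathcal L$ and decay at \emph{both} infinities (the stable subspaces at $\pm\infty$ coincide), and $W[B_1,B_2]$ is the reduced $2\times2$ matrix of the two functions and their first derivatives only; its degeneracy at a point $x_0$ means that some nontrivial combination $c_1B_1+c_2B_2$ vanishes together with its derivative at $x_0$ — the fourth-order analogue of a Sturm node. Your final remark about weighting by $\dim\ker W$ is correct, but the central identification remains asserted rather than proved.
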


\begin{proof}
This result is essentially contained in \cite[Theorem 2.2]{Gr}, where the finite interval case was considered. As shown in several articles (see e.g. \cite{MS,HPZ}), the extension to the real line is direct and does not require additional efforts. We skip the details.
\end{proof}

In what follows, we compute the Wronskian (\ref{WM}). Contrary to the 2-soliton case, where the decoupling of both solitons at infinity simplifies the proof, here we have carried the computations by hand, because of the coupled character of the breather. The surprising fact is the following greatly simplified expression for (\ref{WM}):

\begin{lem}\label{WMLe}
Let $B=B_{\al,\bt}$ be any mKdV breather, and $B_1, B_2$ the corresponding kernel elements defined in \eqref{B12}.  Then
\be\label{Wsimpl}
\det W[B_1,B_2](t;x) =\frac{ 16\al^3\bt^3 (\al^2 +\bt^2)[ \al\sinh(2 \bt y_2) -\bt \sin(2 \al y_1)]}{(\al^2 +\bt^2 +\al^2 \cosh(2\bt y_2) - \bt^2 \cos(2\al y_1))^2}.
\ee
\end{lem}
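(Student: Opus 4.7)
The plan is to compute the Wronskian directly from the explicit expression for $B$, after first reducing it to a symmetric form via a convenient change of variables.

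\textbf{Step 1: Symmetrization of the Wronskian.} The crucial observation is that $B = B_{\al,\bt}(t,x;x_1,x_2)$ depends on $(x,x_1,x_2)$ only through the two variables $y_1 = x+\delta t + x_1$ and $y_2 = x+\ga t+x_2$. Regarding $B$ as a function $B(t,y_1,y_2)$, the chain rule gives the relations $\partial_{x_1} = \partial_{y_1}$, $\partial_{x_2} = \partial_{y_2}$, and $\partial_x = \partial_{y_1}+\partial_{y_2}$. Consequently $B_1 = B_{y_1}$, $B_2 = B_{y_2}$, and expanding $\det W[B_1,B_2] = B_1\partial_x B_2 - B_2\partial_x B_1$ the cross terms $B_{y_1 y_2}$ cancel, leaving
\[
\det W[B_1,B_2] \;=\; B_{y_1}\,B_{y_2 y_2} - B_{y_2}\,B_{y_1 y_1}.
\]

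\textbf{Step 2: Explicit differentiation.} Introduce the shorthand $u := \sin(\al y_1)$, $v := \cos(\al y_1)$, $s := \sinh(\bt y_2)$, $c := \cosh(\bt y_2)$, and $D := \al^2 c^2 + \bt^2 u^2$. Then \eqref{Bsimpl0} reads
\[
B \;=\; \frac{2\sqrt{2}\,\al\bt\,(\al vc - \bt us)}{D}.
\]
The required partials are obtained by the quotient rule, using $\partial_{y_1} u = \al v$, $\partial_{y_1} v = -\al u$, and $\partial_{y_2} s = \bt c$, $\partial_{y_2} c = \bt s$. Each $B_{y_i}$ has the form $P_i/D^2$ and each $B_{y_iy_i}$ has the form $Q_i/D^3$, with $P_i, Q_i$ explicit polynomials in $(u,v,s,c)$.

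\textbf{Step 3: Algebraic reduction.} Form $B_{y_1}B_{y_2 y_2} - B_{y_2}B_{y_1 y_1} = (P_1 Q_2 - P_2 Q_1)/D^5$, expand the numerator, and reduce modulo the elementary identities $u^2 + v^2 = 1$ and $c^2 - s^2 = 1$. A factor $D^3$ divides out, and the surviving expression should equal
\[
B_{y_1}B_{y_2 y_2} - B_{y_2}B_{y_1 y_1} \;=\; \frac{8\al^3\bt^3(\al^2+\bt^2)(\al sc - \bt uv)}{D^2}.
\]
Finally, the double-angle formulas \eqref{double1}--\eqref{double2} give $2sc = \sinh(2\bt y_2)$, $2uv = \sin(2\al y_1)$, and $2D = \al^2+\bt^2+\al^2\cosh(2\bt y_2) - \bt^2\cos(2\al y_1)$, converting the above into \eqref{Wsimpl}.

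The main obstacle is purely algebraic: the numerator $P_1 Q_2 - P_2 Q_1$ has rather high degree in $(u,v,s,c)$, and verifying that $D^3$ divides it while the quotient collapses to the simple monomial expression $8\al^3\bt^3(\al^2+\bt^2)(\al sc - \bt uv)$ is tedious. Organizing the expansion by powers of $D$ and exploiting the separate bilinear structure in the trigonometric pair $(u,v)$ and the hyperbolic pair $(s,c)$ makes the verification feasible by hand, and it is completely routine with a symbolic algebra system. The two structural steps—the chain-rule cancellation of Step 1 and the double-angle conversion of Step 3—are straightforward by contrast.
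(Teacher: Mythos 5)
Your Step 1 contains a genuine error, and it is the step on which the whole reduction rests. Writing $B_1=B_{y_1}$, $B_2=B_{y_2}$ and $\partial_x=\partial_{y_1}+\partial_{y_2}$ is correct, but the cross terms do \emph{not} cancel in the Wronskian:
\[
B_1\,\partial_x B_2 - B_2\,\partial_x B_1
= B_{y_1}\bigl(B_{y_1y_2}+B_{y_2y_2}\bigr)-B_{y_2}\bigl(B_{y_1y_1}+B_{y_1y_2}\bigr)
= B_{y_1}B_{y_2y_2}-B_{y_2}B_{y_1y_1}+\bigl(B_{y_1}-B_{y_2}\bigr)B_{y_1y_2},
\]
and the last term is not identically zero: $B$ is genuinely coupled in $(y_1,y_2)$, so $B_{y_1y_2}\not\equiv 0$, and $B_{y_1}\neq B_{y_2}$ generically (e.g.\ at $\al=1$, $\bt=2$, $y_1=\pi/4$, $y_2=0$ one finds $B_{y_1}=-28/9$ while $B_{y_2}=-16/3$). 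Consequently the identity $\det W=B_{y_1}B_{y_2y_2}-B_{y_2}B_{y_1y_1}$ is false, and the closed form you assert for that quantity in Step 3 cannot coincide with \eqref{Wsimpl} (which is the value of the true $\det W$). The ``should equal'' in Step 3 signals that this was never actually checked.

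The missing idea is that the simplification of $\det W$ is not purely kinematic: it uses the \emph{equations} satisfied by the kernel elements. Differentiating \eqref{2nd} in $x_1,x_2$ gives $(B_j)_{xx}+(\tilde B_j)_t+3B^2B_j=0$ (see \eqref{eqB1}), whence
\[
\partial_x\bigl((B_1)_xB_2-(B_2)_xB_1\bigr)=(\tilde B_2)_tB_1-(\tilde B_1)_tB_2
=(\delta-\ga)\bigl(\tilde B_{12}^2-\tilde B_{11}\tilde B_{22}\bigr),
\]
using $B_j=\tilde B_{j1}+\tilde B_{j2}$ and $(\tilde B_j)_t=\delta\tilde B_{j1}+\ga\tilde B_{j2}$; integrating from $-\infty$ and computing the Hessian determinant of the primitive $\tilde B$ (which turns out to be an exact $x$-derivative) yields \eqref{Wsimpl}. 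Note that the correct pointwise identity involves the Hessian of the \emph{antiderivative} $\tilde B$ and appears only after an $x$-derivative of $\det W$, so it is structurally different from your pointwise Hessian of $B$. A fully brute-force expansion of $B_1(B_2)_x-B_2(B_1)_x$ from \eqref{Bsimpl0}, keeping the cross terms, would of course also succeed, but that is not what your reduction does.
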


\begin{rem}
Since the computation of (\ref{Wsimpl}) involves only  partial derivatives on the $x$-variable, the  result above is still valid for the case of breathers with parameters $x_1,x_2$ depending on time. We skip the details.
\end{rem}

\begin{proof}
We start with a very useful simplification. We claim that
\be\label{Wro}
\det W[B_1,B_2](t;x)  =-2(\al^2 +\bt^2) \int_{-\infty}^x (\tilde B_{12}^2(t,s) - \tilde B_{11}(t,s)\tilde B_{22}(t,s) )ds,
\ee
with $\tilde B=\tilde B(t,x; x_1,x_2)$ defined in (\ref{tB}), and $\tilde B_j =\partial_{x_j}\tilde B$. Let us assume this property. Using (\ref{tB}), we compute each term above.  First of all,
\[
\tilde B_1 = \frac{2\sqrt{2}\al^2\bt \cos(\al y_1)\cosh(\bt y_2) }{\al^2 \cosh^2(\bt y_2) +\bt^2 \sin^2(\al y_1)},  \qquad   \tilde B_2 = \frac{-2\sqrt{2}\al\bt^2 \sin(\al y_1)\sinh(\bt y_2) }{\al^2 \cosh^2(\bt y_2) +\bt^2 \sin^2(\al y_1)}.
\]
Similarly,
\[
\tilde B_{11} =   \frac{-2\sqrt{2}\al^3 \bt \sin(\al y_1) \cosh(\bt y_2)}{\al^2 \cosh^2(\bt y_2) +\bt^2 \sin^2(\al y_1)}   \Big[1 + \frac{2\bt^2  \cos^2(\al y_1)}{\al^2 \cosh^2(\bt y_2) +\bt^2 \sin^2(\al y_1)}\Big],
\]
\[
\tilde B_{12} =   \frac{2\sqrt{2}\al^2 \bt^2\cos(\al y_1) \sinh(\bt y_2)}{\al^2 \cosh^2(\bt y_2) +\bt^2 \sin^2(\al y_1)}  \Big[  1 - \frac{2\al^2 \cosh^2(\bt y_2)}{ \al^2 \cosh^2(\bt y_2) +\bt^2 \sin^2(\al y_1)}\Big],
\]
and
\[
\tilde B_{22} = \frac{ -2\sqrt{2}\al \bt^3  \sin(\al y_1) \cosh(\bt y_2)}{\al^2 \cosh^2(\bt y_2) +\bt^2 \sin^2(\al y_1)}   \Big[1 - \frac{2\al^2 \sinh^2(\bt y_2)}{\al^2 \cosh^2(\bt y_2) +\bt^2 \sin^2(\al y_1)}\Big].
\]
Then,
\begin{align*}
\tilde B_{12}^2 & =  \frac{8\al^4 \bt^4\cos^2(\al y_1) \sinh^2(\bt y_2)}{(\al^2 \cosh^2(\bt y_2) +\bt^2 \sin^2(\al y_1))^2} \times\\
&  \qquad \times \Big[  1 - \frac{4\al^2 \cosh^2(\bt y_2)}{ \al^2 \cosh^2(\bt y_2) +\bt^2 \sin^2(\al y_1)} +\frac{4\al^4 \cosh^4(\bt y_2)}{ (\al^2 \cosh^2(\bt y_2) +\bt^2 \sin^2(\al y_1))^2} \Big],
\end{align*}
and
\begin{align*}
-\tilde B_{11}\tilde B_{22} & = \frac{-8\al^4\bt^4 \sin^2(\al y_1) \cosh^2(\bt y_2)}{(\al^2 \cosh^2(\bt y_2) +\bt^2 \sin^2(\al y_1))^2}  \times \\
&  \qquad \times   \Big[Ê1 + \frac{2\bt^2  \cos^2(\al y_1) -2\al^2 \sinh^2(\bt y_2)}{\al^2 \cosh^2(\bt y_2) +\bt^2 \sin^2(\al y_1)}  -\frac{4\al^2\bt^2  \cos^2(\al y_1) \sinh^2(\bt y_2)}{(\al^2 \cosh^2(\bt y_2) +\bt^2 \sin^2(\al y_1))^2} \Big].
\end{align*}
Adding both terms we obtain, after some simplifications,
\begin{align*}
&  \tilde B_{12}^2 -\tilde B_{11}\tilde B_{22} =\\
&  \qquad =  \frac{8\al^4\bt^4}{(\al^2 \cosh^2(\bt y_2) +\bt^2 \sin^2(\al y_1))^2}   \Big[Ê \cos^2(\al y_1) \sinh^2(\bt y_2) - \sin^2(\al y_1) \cosh^2(\bt y_2) \\
&  \qquad \qquad \qquad + \frac{2\sin^2(\al y_1) \cosh^2(\bt y_2)  [\al^2\sinh^2(\bt y_2) -\bt^2 \cos^2(\al y_1)]}{\al^2 \cosh^2(\bt y_2) +\bt^2 \sin^2(\al y_1)}    \Big]\\
&  \qquad = \frac{8\al^4\bt^4 k_1(t,x)}{(\al^2 \cosh^2(\bt y_2) +\bt^2 \sin^2(\al y_1))^3},
\end{align*}
with 
\begin{align*}
k_1(t,x) & :=Ê  \al^2 \sinh^2(\bt y_2) \cosh^2(\bt y_2) -\al^2   \sin^2(\al y_1) \cosh^2(\bt y_2) \\
&  \qquad -\bt^2 \sin^2(\al y_1) \cosh^2(\bt y_2)  -\bt^2 \sin^2(\al y_1) \cos^2(\al y_1).
\end{align*}
Using double angle formulas, as in (\ref{double1})-(\ref{double2}), we get
\[
 \tilde B_{12}^2 -\tilde B_{11}\tilde B_{22} = \frac{16\al^4\bt^4 k_2}{g_{\al,\bt}^3},
\]
where
\begin{align*}
k_2(t,x) & := \al^2 \sinh^2(2\bt y_2)-\bt^2 \sin^2(2\al y_1) \\
&  \qquad  -(\al^2 +\bt^2)(1+\cosh(2\bt y_2)-\cos(2\al y_1) -\cos(2\al y_1)\cosh(2\bt y_2)).
\end{align*}
and $g_{\al,\bt}$ was defined in (\ref{gab}). The last steps of the proof are the following: since 
\[
k_3(t,x) := 8\al^3 \bt^3 [\bt \sin(2\al y_1) -\al\sinh(2\bt y_2)]
\]
satisfies
\[
(k_3)_x = 16 \al^4 \bt^4 [\cos (2\al y_1) -\cosh(2\bt y_2)],  
\]
and
\[
(k_3)_x  g_{\al,\bt} -2 k_3 (g_{\al,\bt})_x = 16\al^4\bt^4 k_2,
\]
we finally get
\[
 \tilde B_{12}^2 -\tilde B_{11}\tilde B_{22} = \Big( \frac{k_3(t,x)}{g_{\al,\bt}^2} \Big)_x.
\]
Now, with regard to (\ref{Wro}), we integrate in space, to obtain
\[
W[B_1,B_2] = -2(\al^2 +\bt^2)\frac{k_3(t,x)}{g_{\al,\bt}^2} = (\ref{Wsimpl}),
\]
as desired.

\medskip

We prove now (\ref{Wro}). From (\ref{2nd}), taking derivative with respect to $x_1$ and $x_2$, we get
\be\label{eqB1}
(B_1)_{xx} + (\tilde B_1)_t + 3B^2 B_1 =0, \quad (B_2)_{xx} + (\tilde B_2)_t + 3B^2 B_2 =0.
\ee
Multiplying the first equation above by $B_2$ and the second by $-B_1$, and adding both equations, we obtain
\[
(B_1)_{xx}B_2 - (B_2)_{xx} B_1 + (\tilde B_1)_t B_2 -(\tilde B_2)_t B_1 =0,
\]
that is, 
\be\label{interm}
( (B_1)_x B_2 - (B_2)_x B_1)_x =  (\tilde B_2)_t B_1-(\tilde B_1)_t B_2.
\ee
On the other hand, since we are working with smooth functions, one has $B =\tilde B_1 + \tilde B_2$,
\[
B_1 = \tilde B_{11} + \tilde B_{12}, \quad B_2 = \tilde B_{12} + \tilde B_{22},
\]
and
\[
(\tilde B_1)_t = \delta  \tilde B_{11} +  \ga \tilde B_{12}, \quad (\tilde B_2)_t =  \delta\tilde B_{12} + \ga \tilde B_{22}.
\]
Replacing in (\ref{interm}), we get
\[
( (B_1)_x B_2 - (B_2)_x B_1)_x = (\delta- \ga) (\tilde B_{12}^2 -\tilde B_{11}\tilde B_{22}).
\]
Since $\delta = \al^2 -3\bt^2$ and $\ga = 3\al^2 -\bt^2$, substituting above and integrating in space, we obtain the desired conclusion. The proof is complete.
\end{proof}

\begin{prop}\label{WW} The operator $\mathcal L$  defined in \eqref{L1} has a unique negative eigenvalue $-\la_0^2<0$, of multiplicity one. Moreover, $\la_0=\la_0(\al,\bt,x_1,x_2,t)$ depends continuously on its corresponding parameters.
\end{prop}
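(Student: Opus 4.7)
The plan is to apply the uniqueness criterion in Lemma \ref{Wr1} combined with the explicit Wronskian formula of Lemma \ref{WMLe}. First I would observe that the denominator $g_{\alpha,\beta}^2$ appearing in \eqref{Wsimpl} is strictly positive: since $\alpha^2\cosh(2\beta y_2)\ge \alpha^2$ and $-\beta^2\cos(2\alpha y_1)\ge -\beta^2$, one has $g_{\alpha,\beta}\ge 2\alpha^2>0$ pointwise. Consequently, the zero set of $\det W[B_1,B_2](t;\cdot)$ coincides with the zero set of
\[
h(x) \; := \; \alpha\sinh(2\beta y_2) - \beta\sin(2\alpha y_1).
\]

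Next, I would show that $h$ has exactly one zero. A direct differentiation gives
\[
h'(x) \;=\; 2\alpha\beta\bigl[\cosh(2\beta y_2) - \cos(2\alpha y_1)\bigr] \;\ge\; 0,
\]
with equality only at isolated points (because $\cosh\ge 1\ge \cos$, with simultaneous equality requiring $y_2=0$ and $\alpha y_1\in\pi\mathbb Z$). Hence $h$ is strictly increasing on $\mathbb R$. Since $|\sinh(2\beta y_2)|\to +\infty$ as $|x|\to\infty$ while $\sin(2\alpha y_1)$ stays bounded, $h$ changes sign from $-\infty$ to $+\infty$ and has a \emph{unique} zero $x_0=x_0(\alpha,\beta,t,x_1,x_2)$.

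Then I would verify that $\dim\ker W[B_1,B_2](t;x_0)=1$, so that Lemma \ref{Wr1} gives exactly one negative eigenvalue of $\mathcal L$. For parameter values in the generic stratum $h'(x_0)>0$, the zero of the Wronskian is simple, so at least one entry of the column vectors of $W(t;x_0)$ is nonzero and the kernel is one-dimensional. The lower bound of $1$ on the count comes from Lemma \ref{Scaling}, namely $\int \Lambda_\beta B\,\mathcal L[\Lambda_\beta B]=-32\alpha^2\beta<0$, which guarantees via the min-max principle at least one negative eigenvalue. Combining the two bounds yields a unique, simple negative eigenvalue $-\lambda_0^2<0$.

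Finally, continuous dependence of $\lambda_0$ on $(\alpha,\beta,t,x_1,x_2)$ follows from standard analytic perturbation theory for self-adjoint operators: the coefficients of $\mathcal L$ are smooth functions of the parameters, the simple eigenvalue is strictly separated from the essential spectrum $[\min\{(\alpha^2+\beta^2)^2,4\alpha^2\beta^2\},+\infty)$ established earlier, and it cannot cross $0$ by the counting argument just given. The step I expect to be the main obstacle is the degenerate configuration in which both $y_2(x_0)=0$ and $\alpha y_1(x_0)\in\pi\mathbb Z$ hold simultaneously: there $h$ vanishes to higher order and $B_1,B_2$ both vanish at $x_0$, so the literal Greenberg count in Lemma \ref{Wr1} must be handled with care. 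I would resolve this by noting that such coincidences occur only on a discrete set of times $t$ (for fixed $\alpha,\beta,x_1,x_2$) and invoking continuity of the spectrum together with the persistent lower bound from Lemma \ref{Scaling} to exclude splitting or emergence of additional negative eigenvalues.
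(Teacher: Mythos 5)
Your proposal is correct and follows the paper's strategy almost step for step: both arguments rest on the Greenberg count of Lemma \ref{Wr1} applied to the explicit Wronskian of Lemma \ref{WMLe}, both reduce the count to locating the zeros of $\alpha\sinh(2\beta y_2)-\beta\sin(2\alpha y_1)$ via the monotonicity $h'=2\alpha\beta[\cosh(2\beta y_2)-\cos(2\alpha y_1)]\ge 0$, and both must then check that $\dim\ker W$ at the unique zero is one rather than two. The only genuine divergence is the degenerate configuration $y_2=0$, $\alpha y_1\in\pi\Z$, where $h'$ vanishes and all entries of $W$ could in principle vanish simultaneously: the paper resolves this by brute force, evaluating $B_1(t_k,\cdot)$ and $(B_1)_x(t_k,\cdot)$ explicitly at those times and checking that one of them is nonzero, so the Greenberg count stays equal to $1$; you instead propose a continuity-in-$t$ argument. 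Your route does close, but the decisive point deserves to be made explicit: continuity of the spectrum by itself does not forbid the second ordered eigenvalue from dipping below zero at the isolated degenerate times. What saves you is that at every non-degenerate time the ordered eigenvalues satisfy $\mu_1<0=\mu_2=\mu_3$ (exactly one negative eigenvalue plus the two-dimensional kernel $\spawn\{B_1,B_2\}$ from Lemma \ref{B1B2}), so by continuity $\mu_2(t_k)=0$ at the degenerate time as well, while Lemma \ref{Scaling} keeps $\mu_1(t_k)<0$; hence the negative eigenvalue persists and remains simple. With that spelled out, your version is arguably cleaner than the paper's explicit evaluation, at the cost of invoking spectral continuity in $t$. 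Two minor remarks: your inference that a simple zero of $\det W$ forces some entry of $W$ to be nonzero is valid (all four entries vanishing would make the determinant vanish to second order) but worth stating; and the bottom of the essential spectrum is $(\alpha^2+\beta^2)^2$ for $\beta\ge\alpha$ and $4\alpha^2\beta^2$ for $\beta<\alpha$, not the minimum of the two in every case, though this does not affect the separation argument.
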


\begin{proof}
We compute the determinant (\ref{WM}) required by Lemma \ref{Wr1}. From Lemma \ref{WMLe}, after a standard translation argument, we just need to consider the behavior of the function
\be\label{fy2}
f(y_2) =f_{t,\al,\bt,\tilde x_2}(y_2) :=  \al \sinh(2\bt y_2) - \bt \sin (2\al (y_2 + (\delta -\ga)t  + \tilde x_2)),
\ee
for $\tilde x_2 := x_1 -x_2 \in \R$, and $\delta-\ga = -2(\al^2 +\bt^2).$

\medskip

A simple argument shows that for $y_2\in \R$ such that $|\sinh (2\bt y_2)| > \frac \bt\al$, $f$ has no root.
Moreover, there exists $R_0=R_0(\al,\bt)>0$ such that, for all $y_2>R_0$ one has $f(y_2) >0$ and for all $y_2<-R_0$, $f(y_2)<0.$ Therefore, since $f$ is continuous, there is a root $y_0=y_0(t,\al,\bt, \tilde x_2)\in [-R_0, R_0]$ for $f.$ Additionally, if $y_2\neq 0$,
\[
f'(y_2) = 2\al\bt [ \cosh(2\bt y_2) -\cos (2\al (y_2 - 2(\al^2+\bt^2)t + \tilde x_2))]>0,
\]
by simple inspection. Therefore, if $y_0\neq 0$ then it is unique and then
\[
\sum_{x\in \R} \dim \ker W[B_1, B_2] (t; x) = \dim \ker W[B_1,B_2](t; y_0 - \ga t  -x_2) =1,
\]
since $B_1$ or $(B_1)_x$ are not zero at that time. Indeed, it is enough to show that $W[B_1,B_2](t,x)$ is not identically zero, then $\dim \ker W[B_1,B_2]<2$. In order to prove this fact, note that from (\ref{eqB1}) $B_1$ solves, for $t,x_1,x_2\inÊ\R $ fixed, a second order linear ODE with source term $-(\tilde B)_t$. Therefore, by standard well-posedness results, both $B_1$ and $(B_1)_x$ cannot be identically zero at the same point.

Now, let us assume that $y_2=0$ is a zero of $f$. We give a different proof of the same result proved above. From (\ref{fy2}), $t = t_k$ must satisfy the condition
\[
-2(\al^2 +\bt^2) t+ \tilde x_2= \frac{k\pi}{2\al}, \quad k\in \Z,
\] 
(compare with (\ref{dege})). In terms of the variables $y_1$ and $y_2$, one has $y_1 = \frac{k\pi}{2\al}$, $k\in \Z$, and $y_2=0$. Recall that 
\begin{align}
B_1(t,x) & = -2\sqrt{2}\al^2\bt \Big[Ê \frac{\al\sin(\al y_1) \cosh(\bt y_2) + \bt \cos(\al y_1)\sinh(\bt y_2) }{\al^2 \cosh^2(\bt y_2) + \bt^2 \sin^2(\al y_1)} \nonu  \\
&  \qquad + 2\bt^2\sin(\al y_1)\cos(\al y_1) \frac{[ \al\cos(\al y_1) \cosh(\bt y_2) -\bt \sin(\al y_1)\sinh(\bt y_2)] }{(\al^2 \cosh^2(\bt y_2) + \bt^2 \sin^2(\al y_1))^2} \Big]. \label{B1}
\end{align}
Replacing in (\ref{B1}), we get
\[
B_1 (t_k,x) =  \frac{-2\sqrt{2}\al^3\bt \sin(\frac k2\pi)}{\al^2 +\bt^2 \sin^2(\frac k2\pi)} = \begin{cases} \neq 0, & k \hbox{ odd}, \\ 0, & k \hbox{ even}.\end{cases}
\]
For the first case above we conclude as in the previous one. Finally, if $t_k$ satisfies
\[
-2(\al^2 +\bt^2) t+ \tilde x_2= \frac{k\pi}{\al}, \quad k\in \Z,
\]
one has $y_1 =k\pi$, $y_2=0$ and $B_1(t,x) =0$, but from (\ref{B1}), after a direct computation, $(B_1)_x$ is given now by the quantity
\[
(B_1)_x(t_k,x)= -2\sqrt{2}\al^2\bt \Big[ 1  +\frac{\beta^2}{\alpha^2}\cos (k\pi)  +\frac{2\bt^2 \al^2 }{\al^4} \cos^2(k\pi) \Big] \neq 0
\]
for all $k\inÊ\Z$. Therefore $\sum_{x\in \R}\dim \ker W[B_1, B_2] (t_k; x)=1$. In conclusion, for all $t\in \R$,  $\mathcal L$ has just one negative eigenvalue, of multiplicity one. 
\end{proof}

\begin{cor}
There exists a continuous function $f_0 = f_0(\al,\bt)$, well-defined for all $\al,\bt>0$, and such that
\[
-\la_0^2 < -f_0(\al,\bt)<0,
\]
for all $\al,\bt>0$, and all $t,x_1,x_2\in \R$.
\end{cor}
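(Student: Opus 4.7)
The strategy is to combine the Rayleigh variational characterization of the smallest eigenvalue of $\mathcal{L}$ with the negative test direction supplied by Lemma \ref{Scaling}.

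By Proposition \ref{WW}, $\mathcal{L}$ is self-adjoint on $L^2(\R)$ with a unique negative eigenvalue $-\la_0^2$ of multiplicity one, lying strictly below the positive continuous spectrum. Consequently, the Courant--Rayleigh principle at the bottom of the spectrum gives
\[
-\la_0^2 \;=\; \inf_{z\in H^2(\R)\setminus\{0\}} \frac{\langle z, \mathcal{L}z\rangle_{L^2}}{\|z\|_{L^2}^2}.
\]
Plugging in the test function $z = \Lambda_\bt B = \partial_\bt B_{\al,\bt}$ and invoking the identity
\[
\int_\R \Lambda_\bt B \, \mathcal{L}[\Lambda_\bt B] = -32\al^2\bt
\]
from Lemma \ref{Scaling}, we immediately obtain the pointwise estimate
\[
-\la_0^2 \;\leq\; -\frac{32\al^2\bt}{\|\Lambda_\bt B(t,\cdot;x_1,x_2)\|_{L^2(\R)}^2}, \qquad \forall\, (t,x_1,x_2)\in\R^3.
\]

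It then remains to produce a continuous, strictly positive function $f_0(\al,\bt)$ on $(0,\infty)^2$ dominated (up to a factor) by this Rayleigh upper bound uniformly in $(t,x_1,x_2)$. Observe from the explicit formula (\ref{BB}) that $B$ depends on $(t,x_1,x_2)$ only through the phases $y_1 = x+\de t + x_1$ and $y_2 = x+\ga t + x_2$, and so does $\Lambda_\bt B$. After the harmless change of variable $x\mapsto x - \ga t - x_2$ in the integral $\int_\R |\Lambda_\bt B|^2 dx$, the integrand becomes a Schwartz function of $x$ whose remaining dependence on $(t,x_1,x_2)$ is solely through the single phase
\[
\phi \;:=\; (\de - \ga)t + (x_1 - x_2) \;=\; -2(\al^2+\bt^2)t + (x_1 - x_2),
\]
and moreover $\phi$ enters the integrand only through $\sin(\al(x+\phi))$ and $\cos(\al(x+\phi))$. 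Hence the map $\phi\mapsto \|\Lambda_\bt B\|_{L^2}^2$ is continuous and $\frac{2\pi}{\al}$-periodic, and attains a finite maximum
\[
M(\al,\bt) \;:=\; \max_{\phi\in\R} \|\Lambda_\bt B\|_{L^2}^2 \;<\; +\infty.
\]
Smooth dependence of $B_{\al,\bt}$ on its parameters, combined with the uniform (in $\phi$) Schwartz decay, transfers continuity of $M$ to $(\al,\bt)\in(0,\infty)^2$ via dominated convergence. Defining
\[
f_0(\al,\bt) \;:=\; \frac{16\,\al^2\,\bt}{M(\al,\bt)},
\]
we conclude
\[
-\la_0^2 \;\leq\; -\,2 f_0(\al,\bt) \;<\; -f_0(\al,\bt) \;<\; 0,
\]
as required, and the factor $\tfrac12$ guarantees the strict inequality demanded by the statement.

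The main technical point is the uniformity/continuity of $M(\al,\bt)$; this is essentially routine thanks to the Schwartz structure of $\Lambda_\bt B$, and the scaling identity $c^{1/2}B_{\al,\bt}(c^{3/2}t, c^{1/2}x) = B_{c^{1/2}\al, c^{1/2}\bt}(t,x)$ can additionally be used to reduce the verification to the one-parameter family indexed by $\al/\bt>0$, e.g.\ by normalizing $\bt = 1$.
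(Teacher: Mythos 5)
Your route is genuinely different from the paper's. The paper's own proof is a soft compactness argument: it observes that $\la_0$ is a continuous, positive function of $\al,\bt$ and of the single reduced variable $\tilde x_1=(\delta-\ga)t+(x_1-x_2)$, periodic in $\tilde x_1$, hence uniformly bounded away from zero. You instead bound the bottom of the spectrum quantitatively via the Rayleigh quotient evaluated at the negative direction $\Lambda_\bt B$ from Lemma \ref{Scaling}. This has the advantage of avoiding any appeal to continuity of the eigenvalue itself and of producing an explicit candidate for $f_0$; the trade-off is that you must control $\|\Lambda_\bt B\|_{L^2}$ uniformly in $(t,x_1,x_2)$, and that is exactly where your argument has a genuine gap.

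The claim that $\Lambda_\bt B$ depends on $(t,x_1,x_2)$ only through the phases $y_1,y_2$ is false. Writing $B=\Phi_{\al,\bt}(y_1,y_2)$ with $y_1=x+\delta t+x_1$, $y_2=x+\ga t+x_2$, the derivative $\partial_\bt$ at fixed $(t,x,x_1,x_2)$ also hits the $\bt$-dependence of $\delta=\al^2-3\bt^2$ and $\ga=3\al^2-\bt^2$, so
\begin{equation*}
\Lambda_\bt B=(\partial_\bt\Phi_{\al,\bt})(y_1,y_2)-6\bt t\,B_1-2\bt t\,B_2 ,
\end{equation*}
since $B_1=\partial_{y_1}\Phi$ and $B_2=\partial_{y_2}\Phi$. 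The secular terms $-6\bt t\,B_1-2\bt t\,B_2$ make $\|\Lambda_\bt B(t,\cdot;x_1,x_2)\|_{L^2}$ grow linearly in $|t|$, so $\sup_{t,x_1,x_2}\|\Lambda_\bt B\|_{L^2}^2=+\infty$: your $M(\al,\bt)$, if taken as a genuine uniform bound, is infinite, and if taken only as a maximum over the periodic phase $\phi$, it is not an upper bound for all $t$. As written, your Rayleigh estimate therefore degenerates to $0^-$ as $|t|\to\infty$ and yields no uniform $f_0$. The repair is short: replace $\Lambda_\bt B$ by $\widetilde{\Lambda_\bt B}:=\Lambda_\bt B+6\bt t\,B_1+2\bt t\,B_2=(\partial_\bt\Phi_{\al,\bt})(y_1,y_2)$. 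Since $B_1,B_2\in\ker\mathcal L$ and $\mathcal L$ is self-adjoint, $\mathcal Q[\widetilde{\Lambda_\bt B}]=\mathcal Q[\Lambda_\bt B]=-32\al^2\bt$, and $\widetilde{\Lambda_\bt B}$ really does depend on $(t,x_1,x_2)$ only through $(y_1,y_2)$, hence (after translating in $x$) only through the $\tfrac{\pi}{\al}$-periodic phase $\phi$. With this substitution your definition of $M(\al,\bt)$ is finite and continuous, and the rest of your argument, including the factor $\tfrac12$ securing the strict inequality, goes through.
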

\begin{proof}
This is a consequence of the translation invariance and the fact that $\la_0 $ is a continuous, positive function only depending on $\al,\bt$ and $\tilde x_1:= (\delta-\ga)t + (x_1-x_2)$, periodic in $\tilde x_1$ (and then uniformly positive with respect to $\tilde x_1$). 
\end{proof}

\begin{rem}
Note that the  result above is not clear if we allow $\al,\bt$ depending on  time, as in \cite{HPZ}. Since we do not require any kind of modulation on $\al$ and $\bt$, we can easily conclude in the previous result.
\end{rem}

\medskip

Let $z\in H^2(\R)$, and $B=B_{\al,\bt}$ be any mKdV breather. Let us consider the  quadratic form associated to $\mathcal L$:
\begin{align}
\mathcal Q[z]  := \int_\R z \mathcal L[z] & :=   \int_\R z_{xx}^2 + 2(\bt^2 -\al^2)\int_\R z_{x}^2 +(\al^2 +\bt^2)^2\int_\R z^2 - 5\int_\R B^2 z_x^2 \nonu  \\
&   + 5 \int_\R B_x^2 z^2 +10 \int_\R BB_{xx} z^2  + \frac {15}2 \int_\R B^4 z^2 -6(\bt^2 -\al^2) \int_\R B^2 z^2. \label{QQ}
\end{align}

\begin{rem}
From the definition of $\mathcal Q$ and Lemma \ref{B1B2}, it is clear that $\mathcal Q[B_1] =\mathcal Q[B_2]=0.$ Moreover, 
inequality (\ref{Pos}) means that $\Lambda_\al B$ is actually a positive direction for $\mathcal Q$, a completely unexpected result. Additionally,  from (\ref{QQ}) $\mathcal Q$ is bounded below, namely
\[
\mathcal Q[z] \geq -c_{\al,\bt}\|z\|_{H^2(\R)}^2,
\]
for some positive constant $c_{\al,\bt}$ depending on $\al$ and $\beta$ only. 
\end{rem}

Let $ B_{-1} \in \mathcal S\backslash \{ 0\}$ be an eigenfunction associated to the unique negative eigenvalue of the operator $\mathcal L$, as stated in Proposition \ref{WW}. We assume that $ B_{-1} $ has unit $L^2$-norm, so $B_{-1}$ is now unique. In particular, one has $\mathcal L [ B_{-1}] =-\la_0^2 B_{-1} .$ It is clear from Proposition \ref{WW} and Lemma \ref{B1B2} that the following result holds.

\begin{lem}\label{Coerci}
The eigenvalue zero is isolated. Moreover, there exists a continuous function $\nu_0 =\nu_0(\al,\bt)$, well-defined and positive for all $\al,\bt>0$ and such that, for all $z_0\in H^2(\R)$ satisfying
\be\label{Or1}
\int_\R z_0 B_{-1} =\int_\R z_0 B_1 =\int_\R z_0 B_2 =0,
\ee
then 
\be\label{coee}
\mathcal Q[ z_0] \geq \nu_0\| z_0\|_{H^2(\R)}^2.
\ee
\end{lem}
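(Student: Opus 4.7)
The plan is to first establish an $L^2$-coercivity estimate on the orthogonal complement of $\mathrm{span}\{B_{-1},B_1,B_2\}$ using spectral theory, and then to bootstrap it to an $H^2$-coercivity via the positivity of the leading order term $\int z_{xx}^2$ in the definition of $\mathcal{Q}$.

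First I would verify that $0$ is isolated in the spectrum of $\mathcal{L}$. By the previous lemma, the essential spectrum starts at a strictly positive threshold $\mu_c:=\min\{(\alpha^2+\beta^2)^2, 4\alpha^2\beta^2\}>0$ with no embedded eigenvalues. By Proposition \ref{WW} the negative part of the discrete spectrum consists solely of the simple eigenvalue $-\lambda_0^2$, and by Lemma \ref{B1B2} the kernel is two-dimensional. Between $0$ (exclusive) and $\mu_c$ there may be at most finitely many additional positive eigenvalues, which can only accumulate at $\mu_c$ (by general compact-perturbation arguments, since the $B$-dependent potentials in $\mathcal{L}$ decay exponentially). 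Hence there exists $\nu_1=\nu_1(\alpha,\beta)>0$ such that, if $z_0\in H^2(\mathbb{R})$ satisfies the three orthogonality conditions \eqref{Or1}, then by standard spectral decomposition
\begin{equation*}
\mathcal{Q}[z_0]=\int_\mathbb{R} z_0\,\mathcal{L}[z_0]\ \geq\ \nu_1\|z_0\|_{L^2(\mathbb{R})}^2.
\end{equation*}
The value $\nu_1$ is the distance from $0$ to the next spectral point of $\mathcal{L}$, which depends continuously on $\alpha,\beta$ by continuity of the spectrum under continuous perturbation of the potential in $\mathcal{L}$.

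Next, to upgrade this to an $H^2$ estimate I would isolate the top-order term. Since $B$ and its derivatives are uniformly bounded (indeed Schwartz) in terms of $\alpha,\beta$, the definition \eqref{QQ} gives
\begin{equation*}
\mathcal{Q}[z_0]\ \geq\ \|z_{0,xx}\|_{L^2}^2-C_{\alpha,\beta}\bigl(\|z_{0,x}\|_{L^2}^2+\|z_0\|_{L^2}^2\bigr),
\end{equation*}
and the interpolation $\|z_{0,x}\|_{L^2}^2\leq\|z_0\|_{L^2}\|z_{0,xx}\|_{L^2}\leq \tfrac12\|z_{0,xx}\|_{L^2}^2+\tfrac12\|z_0\|_{L^2}^2$ absorbs the middle term, so
\begin{equation*}
\mathcal{Q}[z_0]\ \geq\ \tfrac12\|z_{0,xx}\|_{L^2}^2-\tilde C_{\alpha,\beta}\|z_0\|_{L^2}^2.
\end{equation*}
Forming the convex combination $\mathcal{Q}[z_0]=(1-\theta)\mathcal{Q}[z_0]+\theta\mathcal{Q}[z_0]$ and applying the $L^2$-coercivity to the first piece and the estimate above to the second, for $\theta>0$ sufficiently small (depending on $\nu_1$ and $\tilde C_{\alpha,\beta}$) I obtain
\begin{equation*}
\mathcal{Q}[z_0]\ \geq\ \tfrac{\theta}{2}\|z_{0,xx}\|_{L^2}^2+\bigl((1-\theta)\nu_1-\theta\tilde C_{\alpha,\beta}\bigr)\|z_0\|_{L^2}^2\ \geq\ \nu_0\|z_0\|_{H^2(\mathbb{R})}^2,
\end{equation*}
for a suitable $\nu_0=\nu_0(\alpha,\beta)>0$; the $H^2$-norm equivalence with $\|z_{0,xx}\|_{L^2}^2+\|z_0\|_{L^2}^2$ (together with the interpolation) closes the estimate. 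Continuity of $\nu_0$ follows from the continuous dependence of both $\nu_1$ and the potentials of $\mathcal{L}$ on $(\alpha,\beta)$.

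The main obstacle is the first step: guaranteeing that the spectral gap above $0$ is strictly positive rather than merely non-negative. Algebraically this is nothing but the statement that $\mathcal{L}$ has no generalized kernel beyond $\mathrm{span}\{B_1,B_2\}$, i.e. no sequence of eigenvalues tending to $0^+$; the decisive input for this is Proposition \ref{WW} (and its Wronskian-based proof), which fixes the Morse index at exactly one, together with the absence of embedded eigenvalues granted by the decay of $B$. Everything else is comparatively routine.
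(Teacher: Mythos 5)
Your spectral skeleton is sound and matches the paper's in spirit: $0$ is isolated because the discrete spectrum below the essential spectrum cannot accumulate at $0$ (it could only accumulate at the bottom of the essential spectrum, which is strictly positive by the Weyl-type lemma), Proposition \ref{WW} and Lemma \ref{B1B2} identify the full non-positive spectral subspace as $\spawn\{B_{-1},B_1,B_2\}$, so the orthogonality conditions \eqref{Or1} place $z_0$ in the positive spectral subspace and give $\mathcal Q[z_0]\geq \nu_1\|z_0\|_{L^2}^2$; and the upgrade from $L^2$- to $H^2$-coercivity by isolating $\int z_{0,xx}^2$ in \eqref{QQ}, interpolating the first-order term, and taking a convex combination is routine and correct.

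However, there is a genuine gap in your treatment of the constant. The operator $\mathcal L$ in \eqref{L1} has coefficients built from $B=B_{\al,\bt}(t,\cdot;x_1,x_2)$, so the spectral gap $\nu_1$ you produce a priori depends not only on $(\al,\bt)$ but also on $t$, $x_1$ and $x_2$: the relative phase between the oscillatory factor $\sin(\al y_1)$ and the envelope $\sech(\bt y_2)$ changes with time, genuinely deforming the potential (the breather is a bound state, not a pair of decoupling solitons). The lemma asserts $\nu_0=\nu_0(\al,\bt)$, i.e.\ a bound uniform over all times and translations, and your argument as written does not deliver this — one must rule out the possibility that the gap degenerates along a sequence of times. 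The paper closes this by observing that, modulo a common spatial translation (which leaves the spectrum invariant), the operator depends on $(t,x_1,x_2)$ only through the single variable $\tilde x_1=(\delta-\ga)t+(x_1-x_2)$, and that this dependence is periodic (cf.\ \eqref{dege}); continuity of the gap in $\tilde x_1$ together with compactness of the period interval then yields a uniform positive lower bound. You should add this reduction; without it the constant in \eqref{coee} is not the one the lemma (and the later stability argument, which needs uniformity on $[0,T^*]$) requires.
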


\begin{proof}
The isolatedness of the zero eigenvalue is a direct consequence of standard elliptic estimates for the eigenvalue problem associated to $\mathcal L$, corresponding uniform convergence on compact subsets of $\R$, and the non degeneracy of the kernel associated to $\mathcal L$. 

\medskip
 
On the other hand, the existence of a \emph{positive} constant $\nu_0=\nu_0(\al,\bt,x_1,x_2,t)$ such that (\ref{coee}) is satisfied is now clear. Moreover, this constant is periodic in $x_1$, continuous in all its variables, and satisfies, via translation invariance, the identity 
\[
\nu_0(\al,\bt,x_1,x_2,t) = \tilde \nu_0(\al,\bt,\tilde x_1)>0, \quad \tilde x_1 := (\delta -\ga)t + (x_1-x_2),  
\]
with $\tilde \nu_0$ continuous in all its variables. Thanks to the periodic character of the variable $\tilde x_1$, we obtain a uniform, positive bound independent of $x_1,x_2$ and $t$, still denoted  $\nu_0$. The proof is complete.
\end{proof}

It turns out that $B_{-1}$ is hard to manipulate; we need a more tractable version of the previous result. 

\begin{prop}\label{PropOrtog} Let $B=B_{\al,\bt}$ be any mKdV breather, and $B_1,B_2$ the corresponding kernel of the associated operator $\mathcal L$. There exists $\mu_0>0$, depending on $\al,\bt$ only, such that, for any  $ z\in H^2(\R)$ satisfying
\be\label{OrthoK}
\int_\R B_1  z = \int_\R B_2 z =0,
\ee
one has
\be\label{Coer}
\mathcal Q[ z] \geq \mu_0\| z\|_{H^2(\R)}^2 -\frac{1}{\mu_0}\Big(\int_\R z B \Big)^2.
\ee
\end{prop}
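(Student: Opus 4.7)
My plan is the classical Weinstein-type substitution: replace the orthogonality to the abstract negative eigenfunction $B_{-1}$ appearing in Lemma \ref{Coerci} by the concrete orthogonality to $B$, at the price of a quadratic correction in $\int zB$. The pivot is the auxiliary function $B_0$ of Corollary \ref{B0cor}, which satisfies $\mathcal L[B_0] = -B$, $\int B_0 B>0$, and $\int B_0\mathcal L[B_0]<0$.

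First I would reduce to the fully constrained case. Since the mass identity $M[B] = 4\beta$ from Lemma \ref{ME} depends only on $\beta$, differentiation in $\alpha,\beta$ via (\ref{LAB1})--(\ref{LAB2}) yields $\int B_0 B_j = 0$ for $j=1,2$. Setting $t := \int zB \,/\int B_0 B$ and $z_1 := z - tB_0$, one checks that $z_1$ lies in the three-orthogonality space $W := \{u \in H^2 : \int uB_1 = \int uB_2 = \int uB = 0\}$. Using the self-adjointness of $\mathcal L$ together with $\mathcal L[B_0] = -B$, the cross term $2t \int B_0 \mathcal L[z_1] = -2t \int Bz_1$ vanishes, and one obtains the clean decomposition
\[
\mathcal Q[z] \;=\; t^{2}\mathcal Q[B_0] \;+\; \mathcal Q[z_1].
\]

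The core step is then to prove a reinforced coercivity $\mathcal Q[z_1] \geq \mu_1 \|z_1\|_{H^2}^{2}$ on $W$. To this end I would use the $L^{2}$-orthogonal decomposition $z_1 = a w^\ast + z_\perp$, with $w^\ast := B_{-1} + b^\ast B_1 + c^\ast B_2$ chosen so that $w^\ast \perp B_1, B_2$, and $z_\perp$ orthogonal to $\mathrm{span}(B_{-1},B_1,B_2)$. Since $\int BB_j = 0$, the constraint $\int z_1 B = 0$ reduces to $a K_0 + \int z_\perp B = 0$ with $K_0 := \int B\,B_{-1}$. A crucial preliminary is $K_0 \neq 0$: otherwise, applying self-adjointness to $\mathcal L[B_0] = -B$ gives $\int B_0 B_{-1}=0$, which together with $\int B_0 B_j=0$ places $B_0$ in the non-negative spectral subspace of $\mathcal L$ (by the uniqueness of the negative eigenvalue, Proposition \ref{WW}), contradicting $\mathcal Q[B_0]<0$. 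Then the identity
\[
\mathcal Q[z_1] \;=\; -a^{2}\lambda_0^{2}\,\|w^\ast\|_{L^2}^{2} \;+\; \mathcal Q[z_\perp],
\]
combined with the coercivity $\mathcal Q[z_\perp]\geq \nu_0 \|z_\perp\|_{H^2}^{2}$ of Lemma \ref{Coerci} and the Cauchy--Schwarz control $a^{2}K_0^{2} \leq \|z_\perp\|_{L^2}^{2}\|B\|_{L^2}^{2}$, yields an $L^{2}$-coercivity on $W$; an elliptic bootstrap (the leading symbol of $\mathcal L$ is $\partial_x^{4}$ with constant coefficient) upgrades this to the desired $H^{2}$-coercivity.

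Inserting the $W$-coercivity bound into the Step~1 identity, and using $|t| \leq C|\int zB|$ together with the elementary $\|z_1\|_{H^{2}}^{2} \geq \tfrac12 \|z\|_{H^{2}}^{2} - t^{2}\|B_0\|_{H^{2}}^{2}$, produces (\ref{Coer}) after tuning $\mu_0$ small enough. The main difficulty lies in the intermediate quantitative step: turning the qualitative spectral non-degeneracy into an explicit coercivity constant. The naive Cauchy--Schwarz $(\int z_\perp B)^{2} \leq \|z_\perp\|^{2}\|B\|^{2}$ may lose a factor $\|B\|^{2}/K_0^{2}$ larger than $\nu_0$; the remedy is to use the sharper bound $(\int z_\perp B)^{2} \leq \|z_\perp\|^{2}\|B_+\|^{2}$, where $B_+$ is the projection of $B$ onto $\mathrm{span}(B_{-1},B_1,B_2)^{\perp}$, together with the Pythagorean identity $\|B\|^{2} = K_0^{2}/\|w^\ast\|^{2} + \|B_+\|^{2}$ and the strict positivity $\int B_0 B >0$ from Corollary \ref{B0cor}, which precisely supplies the missing geometric margin.
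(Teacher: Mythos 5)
Your overall architecture is the same as the paper's (trade the abstract negative eigenfunction $B_{-1}$ for the explicit direction $B$ via the auxiliary function $B_0$ with $\mathcal L[B_0]=-B$, then prove positivity of $\mathcal Q$ on the triply-constrained subspace), but the core quantitative step does not close as written. After reducing to $z_\perp$ in the positive spectral subspace with the single linear constraint $aK_0=-\int z_\perp B_+$, you must show that
\[
\mathcal Q[z_\perp]-\frac{\la_0^2}{K_0^2}\Big(\int z_\perp B_+\Big)^2
\]
is coercive. Your proposed remedy --- plain $L^2$ Cauchy--Schwarz against $B_+$ plus the Pythagorean identity --- requires $\nu_0 K_0^2>\la_0^2\|B_+\|_{L^2}^2$, and the hypothesis $\int B_0B>0$ does \emph{not} imply this. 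Indeed, writing $b_0:=-\mathcal L^{-1}B_+$ (the part of $B_0$ in the positive subspace), one has $\int B_0B>0\iff K_0^2>\la_0^2\,\mathcal Q[b_0]$, and since $\mathcal Q[b_0]=\int b_0\mathcal L[b_0]$ weights the spectral components of $B_+$ by $1/\nu_k$, it can be much smaller than $\|B_+\|_{L^2}^2/\nu_0$ when $B_+$ has mass on high modes; in that regime $\int B_0B>0$ holds while your chain of inequalities gives a negative lower bound. The correct tool is the Cauchy--Schwarz inequality for the \emph{$\mathcal L$-weighted} inner product on the positive subspace: $\big(\int z_\perp B_+\big)^2=\big(\int \mathcal L[z_\perp]\,b_0\big)^2\le \mathcal Q[z_\perp]\,\mathcal Q[b_0]$, which yields the factor $1-\la_0^2\mathcal Q[b_0]/K_0^2>0$, i.e.\ exactly the condition $\int B_0B>0$. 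This is precisely the computation the paper performs in (\ref{deco})--(\ref{condit}); without it your "missing geometric margin" is not supplied.

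Two secondary points. First, your claim $\int B_0B_j=0$ does not follow from differentiating the mass identity (that only gives $\int B\,B_j=0$), and by the chain rule $\Lambda_\al B$ picks up contributions proportional to $tB_1,tB_2$, so the orthogonality is false for general $t$; this is harmless, since replacing $B_0$ by $B_0-p_1B_1-p_2B_2$ changes neither $\mathcal L[B_0]=-B$, nor $\mathcal Q[B_0]$, nor $\int B_0B$ (the paper handles this by simply allowing kernel components in the decomposition of $B_0$). Second, the statement requires $\mu_0$ to depend on $\al,\bt$ only, whereas $\la_0$, $\nu_0$, $K_0$ a priori depend on $t,x_1,x_2$; you need the continuity-plus-periodicity argument in $\tilde x_1=(\delta-\ga)t+(x_1-x_2)$ (as in Lemma \ref{Coerci}) to make the constant uniform.
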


\begin{proof}
This is a standard result, but we include it for the sake of completeness.  Indeed, it is enough to prove that, under the conditions (\ref{OrthoK}) and the additional orthogonality condition $\int_\R zB =0$, one has
\[
\mathcal Q[ z] \geq \mu_0\| z\|_{H^2(\R)}^2.
\]
In what follows we prove that we can replace $ B_{-1}$ by the breather $B$ in Lemma \ref{Coerci} and the result essentially does not change. Indeed, note that from (\ref{B0}), the function $B_0$ satisfies $\mathcal L[B_0] = - B$, and from (\ref{negB0}),
\be\label{posB0}
\int_\R B_0 B = -\int_\R B_0 \mathcal L[B_0] =-\mathcal Q[B_0] > 0.
\ee
The next step is to decompose $z$ and $B_0$ in $\spawn (B_{-1}, B_1,B_2)$ and the corresponding orthogonal subspace. One has
\[
z =\tilde z +  m B_{-1}, \quad  B_0=  b_0 +  n  B_{-1} + p_1 B_1 + p_2 B_2, \quad m,n, p_1,p_2\in \R,
\]
where 
\[
\int_\R \tilde z  B_{-1} =\int_\R \tilde z B_1=\int_\R \tilde z B_2 =\int_\R   b_0 B_{-1} =\int_\R  b_0 B_1=\int_\R  b_0 B_2 =0. 
\]
Note in addition that 
\[
\int_\R B_{-1}B_1 =\int_\R B_{-1}B_2 =0.
\]
From here and the previous identities we have
\be\label{Qzz}
\mathcal Q[z] =\int_\R (\mathcal L \tilde z - m\la_0^2 B_{-1})(\tilde z +m B_{-1}) = \mathcal Q[\tilde z] -m^2 \la_0^2. 
\ee
Now, since $\mathcal L[B_0] =-B$, one has 
\begin{align}
0 & = \int_\R z B = -\int_\R z\mathcal L[B_0]  =\int_\R \mathcal L[\tilde z +m B_{-1}] B_0\nonu\\
&  = \int_\R (\mathcal L[\tilde z] -m\la_0^2 B_{-1})( b_0 +n B_{-1} + p_1 B_1 + p_2 B_2) \ =  \int_\R \mathcal L[\tilde z]  b_0 -mn\la_0^2.\label{zB}
\end{align}
On the other hand,  from Corollary \ref{B0cor},
\be\label{B0B}
\int_\R B_0 B =  -\int_\R B_0 \mathcal L[B_0] = -\int_\R ( b_0 + n B_{-1}) (\mathcal L[b_0] -n\la_0^2 B_{-1})= -\mathcal Q[b_0] + n^2 \la_0^2. 
\ee
Replacing (\ref{zB}) and (\ref{B0B}) into (\ref{Qzz}), we get
\be\label{deco}
\mathcal Q[z] = \mathcal Q[\tilde z] -\frac{\displaystyle{\Big(\int_\R \mathcal L[\tilde z] b_0\Big)^2}}{\displaystyle{\int_\R B_0B + \mathcal Q[b_0]}}.  
\ee
Note that from (\ref{posB0}) and (\ref{coee}) both quantities in the denominator are positive. Additionally, note that if $\tilde z =\la b_0$, with $\la\neq 0$, then
\[
\Big(\int_\R \mathcal L[\tilde z] b_0 \Big)^2 = \mathcal Q[\tilde z] \mathcal Q[b_0].
\]
In particular, if $\tilde z =\la b_0$,
\be\label{condit}
\frac{\displaystyle{\Big(\int_\R \mathcal L[\tilde z] b_0\Big)^2}}{\displaystyle{\int_\R B_0B + \mathcal Q[b_0]}} \leq a\,  Q[\tilde z], \quad 0<a<1.
\ee
In the general case, using the orthogonal decomposition induced by the scalar product $(\mathcal L \cdot ,Ê\cdot)_{L^2} $ on $\spawn (B_{-1}, B_1,B_2)$, we get the same conclusion as before. Therefore, we have proved (\ref{condit}) for all possible $\tilde z$. 

Finally, replacing in (\ref{deco}) and (\ref{Qzz}), $\mathcal Q[z] \geq (1-a) \mathcal Q[\tilde z] \geq 0$,  and $\mathcal Q[\tilde z] \geq m^2 \la_0^2$. We have, for some $C>0$,
\begin{align*}
\mathcal Q[z] & \geq  (1-a)\mathcal Q[\tilde z]  \geq \frac 12 (1-a)\mathcal Q[\tilde z] + (1-a)m^2 \la_0^2 \\
&\geq  \frac 1C(2 \|\tilde z\|_{H^2(\R)}^2 +  2m^2 \|B_{-1}\|_{H^2(\R)}^2) \geq \frac 1C\|z\|_{H^2(\R)}^2.
\end{align*}
\end{proof}
\bigskip

\section{Lyapunov functional}\label{5}

\medskip
In this section we introduce a new Lyapunov functional for equation (\ref{mKdV}), which will be well-defined at the natural $H^2$ level.  

\medskip

Indeed, let $u_0\in H^2(\R)$ and let $u=u(t) \in H^2(\R)$ be the corresponding local in time solution of the Cauchy problem associated to (\ref{mKdV}), with initial condition $u(0)=u_0$ (cf. \cite{KPV}). Let us define the $H^2$-functional
\be\label{F1}
F[u](t) \!   :=  \! \frac 12 \int_\R u_{xx}^2(t,x) dx -\frac 52 \int_\R u^2(t,x)u_x^2(t,x) dx+ \frac 14 \int_\R u^6(t,x) dx.
\ee
\begin{lem}\label{dF10} Given $u$ local $H^2$-solution of \eqref{mKdV} with initial data $u_0$, the functional $F[u](t)$ is a conserved quantity.  In particular, $u$ is a global-in-time $H^2$-solution.
\end{lem}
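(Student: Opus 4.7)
The plan is to (i) differentiate $F[u](t)$ in time, substitute the mKdV equation, and show the result vanishes after several integrations by parts; and (ii) combine the new conservation law with the mass and energy to obtain an a priori bound on $\|u\|_{H^2(\R)}$, giving global existence by the standard continuation argument from the local Cauchy theory of Kenig--Ponce--Vega \cite{KPV}.

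For step (i) I would work at the formal level first, using $u_t = -(u_{xx}+u^3)_x = -u_{xxx} - 3u^2 u_x$. A direct computation gives
\begin{align*}
\frac{d}{dt} F[u] &= \int_\R u_{xx}u_{xxt} - 5\int_\R u u_t u_x^2 - 5\int_\R u^2 u_x u_{xt} + \frac{3}{2}\int_\R u^5 u_t.
\end{align*}
After integrating by parts to move all $t$-derivatives onto $u$, one substitutes the expression for $u_t$ and expands. The resulting integrand is a polynomial in $u$ and its $x$-derivatives up to order five; repeated integration by parts, using that $u(t)$ and its derivatives vanish at $\pm\infty$ (which holds for Schwartz data and extends to $H^2$ by density), collapses all terms. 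In fact $F[u]$ is (up to normalization) the next Lax invariant in the mKdV hierarchy after mass and energy, so the cancellation is guaranteed by integrability; the calculation is tedious but mechanical. To make the manipulation rigorous at the $H^2$-level, I would first justify it for Schwartz initial data (where the calculation above is unambiguously valid) and then extend to $H^2$ by the continuous dependence of the mKdV flow on initial data in $H^2$ (again \cite{KPV}), noting that $F$ is continuous on $H^2(\R)$ by Sobolev embedding $H^2 \hookrightarrow L^\infty$.

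For step (ii), solving $\|u_{xx}\|_{L^2}^2 = 2F[u] + 5\int u^2 u_x^2 - \tfrac 12 \int u^6$ and using Gagliardo--Nirenberg yields
\[
\Big|\int_\R u^2 u_x^2\Big| \leq \|u\|_{L^\infty}^2\|u_x\|_{L^2}^2 \lesssim \|u\|_{L^2}\|u_x\|_{L^2}^3,
\qquad \int_\R u^6 \lesssim \|u\|_{L^2}^4\|u_x\|_{L^2}^2,
\]
with $\|u_x\|_{L^2}^2$ controlled by $E[u]$ and $M[u]$ through (\ref{E1}) and a further Gagliardo--Nirenberg estimate on $\|u\|_{L^4}^4$. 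Hence $\|u\|_{H^2(\R)}$ is bounded on the entire interval of existence by a constant depending only on $M[u_0]$, $E[u_0]$ and $F[u_0]$, and the local $H^2$ theory of \cite{KPV} yields a global solution.

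The main obstacle is the length of the integration-by-parts computation in step (i); choosing a good bookkeeping (e.g. grouping terms by the total number of derivatives and powers of $u$) is essential to see the cancellation without making arithmetic errors. Once that is done, step (ii) is a routine application of Gagliardo--Nirenberg plus the mass/energy conservation laws already recorded in (\ref{M1})--(\ref{E1}).
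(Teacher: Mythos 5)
Your proposal is correct and takes essentially the same route the paper intends: the paper itself offers no written computation, stating only that ``the verification of Lemma \ref{dF10} is a direct computation,'' and your plan (differentiate, substitute $u_t=-(u_{xx}+u^3)_x$, integrate by parts, then combine $F$, $E$, $M$ with Gagliardo--Nirenberg and the local theory of \cite{KPV} for globality) is precisely that computation carried out and made rigorous. For the record the cancellation does close: after the substitutions all terms collapse to $\int_\R u\,u_x^2 u_{xxx}+2\int_\R u\,u_x u_{xx}^2$, which vanishes after one further integration by parts since $\int_\R u_x^3 u_{xx}=0$.
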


The existence of this last conserved quantity is a deep consequence of the \emph{integrability property}. In particular,  it is not present in a general, non-integrable gKdV equation. The verification of Lemma \ref{dF10} is a direct computation.

\medskip

Using the functional $F[u]$ \eqref{F1}, we introduce a new Lyapunov functional specifically related to the breather solution. Let $B = B_{\al,\bt}$ be a mKdV breather, and $t\in \R$, and $M[u]$ and $E[u]$ given in (\ref{M1}), (\ref{E1}). We define
\be\label{H1}
\mathcal{H}[u](t) := F[u](t) + 2(\bt^2-\al^2) E[u](t) + (\al^2 +\bt^2)^2 M[u](t), \quad \al, \, \bt \, \hbox{ scaling parameters}.
\ee
It is clear that $\mathcal H[u]$ represents a  real-valued conserved quantity, well-defined for $H^2$-solutions of (\ref{mKdV}). Moreover, one has the following

\begin{lem}\label{EE0}
Let $z\in H^2(\R)$ be any function with sufficiently small $H^2$-norm, and $B=B_{\al,\bt}$ be any breather solution.  Then, for all $t\in \R$,  one has
\be\label{EE}
\mathcal{H}[B+z] - \mathcal{H}[B]  = \frac 12\mathcal Q[z] + \mathcal N[z],
\ee
with $\mathcal Q$ being the quadratic form defined in \eqref{QQ}, and $\mathcal N[z]$ satisfying $|\mathcal N[z] | \leq K\|z\|_{H^2(\R)}^3.$
\end{lem}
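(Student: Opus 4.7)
\textbf{Proof proposal for Lemma \ref{EE0}.} The plan is a straightforward Taylor expansion of $\mathcal{H}[B+z]-\mathcal{H}[B]$ in powers of $z$, collecting the zeroth, linear, quadratic, and higher-order contributions separately. Since $\mathcal H$ is polynomial in $u$ and its derivatives up to order two, the expansion truncates exactly at order six, and the only genuinely nontrivial steps are (i) identifying the linear term and (ii) matching the quadratic term with $\mathcal Q$.

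First I would expand each ingredient. Writing $u=B+z$, the mass contributes $\int Bz + \tfrac12\int z^2$; the energy contributes, after one integration by parts, $-\int(B_{xx}+B^3)z + \tfrac12\int z_x^2 - \tfrac32\int B^2z^2 - \int Bz^3-\tfrac14\int z^4$; and for $F[u]$, after integrating by parts twice in the $u_{xx}^2$ term and once in the $u^2u_x^2$ term so that no derivatives of order higher than one fall on $z$, I obtain a linear piece $\int\bigl[B_{(4x)}+5BB_x^2+5B^2B_{xx}+\tfrac32 B^5\bigr]z$, a quadratic piece
\[
\tfrac12\!\int z_{xx}^2 -\tfrac52\!\int B^2 z_x^2 +\tfrac52\!\int B_x^2 z^2 +5\!\int BB_{xx}z^2+\tfrac{15}{4}\!\int B^4 z^2,
\]
(where I used $-10\int BB_x z z_x = 5\int (BB_x)_x z^2$), plus purely cubic and higher remainders of the form $-\tfrac52\int[2Bzz_x^2+2B_x z^2z_x+z^2z_x^2]$ and $\tfrac14\int[20B^3z^3+15B^2z^4+6Bz^5+z^6]$.

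Summing the three contributions weighted as in \eqref{H1}, the linear-in-$z$ term becomes exactly
\[
\int z\,\Bigl[B_{(4x)}-2(\bt^2-\al^2)(B_{xx}+B^3)+(\al^2+\bt^2)^2 B +5BB_x^2+5B^2B_{xx}+\tfrac32 B^5\Bigr] \;=\; \int z\, G[B],
\]
which vanishes identically by Proposition~\ref{GB}. The quadratic-in-$z$ contribution is
\[
\tfrac12\!\int z_{xx}^2 +(\bt^2-\al^2)\!\int z_x^2 +\tfrac12(\al^2+\bt^2)^2\!\int z^2-\tfrac52\!\int B^2 z_x^2+\tfrac52\!\int B_x^2 z^2+5\!\int BB_{xx}z^2+\tfrac{15}{4}\!\int B^4 z^2-3(\bt^2-\al^2)\!\int B^2z^2,
\]
which is exactly $\tfrac12\mathcal Q[z]$ by comparison with \eqref{QQ}.

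Finally, I would gather the remaining cubic, quartic, quintic and sextic terms from the $F$ and $E$ expansions into $\mathcal N[z]$. Since $B$ is Schwartz, all coefficients involving $B$, $B_x$ are uniformly bounded in $x$ and $t$; and in one dimension $H^1\hookrightarrow L^\infty$, so $\|z\|_{L^\infty}, \|z_x\|_{L^\infty}\lesssim \|z\|_{H^2}$. The worst cubic terms, $\int Bzz_x^2$ and $\int B_xz^2z_x$ and $\int B^3 z^3$, are each bounded by $C\|z\|_{H^2}^3$, and the quartic through sextic terms are bounded by $C\|z\|_{H^2}^k$ with $k\geq 4$. For $\|z\|_{H^2}$ sufficiently small, all higher powers are absorbed into $\|z\|_{H^2}^3$, yielding $|\mathcal N[z]|\leq K\|z\|_{H^2}^3$ as claimed. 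The only conceptual step is step (i), which is precisely what the nonlinear stationary identity \eqref{EcB} was designed to provide; everything else is bookkeeping.
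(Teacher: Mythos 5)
Your proposal is correct and follows essentially the same route as the paper: a direct polynomial expansion of $\mathcal H[B+z]$, cancellation of the linear term via the stationary identity $G[B]=0$ from Proposition~\ref{GB}, identification of the quadratic part with $\tfrac12\mathcal Q[z]$, and a Sobolev-embedding bound $\|z\|_{L^\infty}+\|z_x\|_{L^\infty}\lesssim\|z\|_{H^2}$ on the cubic-and-higher remainder. The coefficients you report for the linear, quadratic, and remainder pieces all match the paper's computation (up to an integration by parts in the cubic term $\int B_x z^2 z_x$), so there is nothing to add.
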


\begin{proof}
We compute:
\begin{align*}
\mathcal{H}[B+z]  = & \frac 12 \int_\R (B+z)_{xx}^2 -\frac 52 \int_\R (B+z)^2(B+z)_x^2 + \frac 14 \int_\R (B+z)^6\\
&  + (\bt^2-\al^2) \int_\R (B+z)_x^2 - \frac 12 (\bt^2 -\al^2) \int_\R (B+z)^4  +\frac 12 (\al^2 +\bt^2)^2 \int_\R (B+z)^2\\
 = & \frac 12 \int_\R B_{xx}^2 -\frac 52 \int_\R B^2B_x^2 + \frac 14 \int_\R B^6\\
&  + (\bt^2-\al^2) \int_\R B_x^2 - \frac 12 (\bt^2 -\al^2) \int_\R B^4  +\frac 12 (\al^2 +\bt^2)^2 \int_\R B^2\\
&   + \int_\R \big[ B_{(4x)} -2(\bt^2 -\al^2) (B_{xx} + B^3)  +(\al^2 +\bt^2)^2 B   + 5B_x^2 B + 5B^2 B_{xx} + \frac 32 B^5\big] z\\
&  +\frac 12 \Big[  \int_\R z_{xx}^2 + 2(\bt^2 -\al^2)\int_\R z_{x}^2 +(\al^2 +\bt^2)^2\int_\R z^2 - 5\int_\R B^2 z_x^2 \nonu  \\
&   + 5 \int_\R B_x^2 z^2 +10 \int_\R BB_{xx} z^2  + \frac {15}2 \int_\R B^4 z^2 -6(\bt^2 -\al^2) \int_\R B^2 z^2 \Big]\\
&  +5\int_\R B^3 z^3 -2(\bt^2-\al^2)\int_\R Bz^3 +\frac 53\int_\R B_{xx} z^3 -5\int_\R B z_x^2 z \\
&  +\frac {15}4 \int_\R B^2 z^4 -\frac 12 (\bt^2-\al^2)\int_\R z^4 -\frac 52\int_\R z^2 z_x^2  +\frac 32 \int_\R B z^5 +\frac 14 \int_\R z^6.
\end{align*}
Therefore, we have the decomposition
\[
\mathcal{H}[B+z]   =  \mathcal{H}[B] + \int_\R G[B] z(t)  + \frac 12\mathcal Q[z] + \mathcal N[z],
\]
where $\mathcal Q$ is defined in (\ref{QQ}), and 
\begin{align*}
G[B] & = & B_{(4x)} -2(\bt^2 -\al^2) (B_{xx} + B^3)  +(\al^2 +\bt^2)^2 B   + 5B_x^2 B + 5B^2 B_{xx} + \frac 32 B^5.
\end{align*}
From Proposition \ref{GB}, one has $G[B] \equiv 0$. Finally, the term $N[z]$ is given by 
\begin{align*}
\mathcal N[z]  := & 5\int_\R B^3 z^3 -2(\bt^2-\al^2)\int_\R Bz^3 +\frac 53\int_\R B_{xx} z^3 -5\int_\R B z_x^2 z \\
 & +\frac {15}4 \int_\R B^2 z^4 -\frac 12 (\bt^2-\al^2)\int_\R z^4 -\frac 52\int_\R z^2 z_x^2  +\frac 32 \int_\R B z^5 +\frac 14 \int_\R z^6.
\end{align*}
Therefore, from direct estimates one has $\mathcal N[z] = O(\|z\|_{H^2(\R)}^3),$ as desired.
\end{proof}
The previous Lemma is the key step to the proof of the main result of this paper, that we develop in the next section.

\bigskip

\section{Proof of the Main Theorem}\label{6}

\medskip

In this section we prove a detailed version of  Theorem \ref{MT}.

\begin{thm}[$H^2$-stability of mKdV breathers]\label{T1} Let $\al, \bt >0$. There exist parameters $\eta_0, A_0$, such that the following holds.  Consider $u_0 \in H^2(\R)$, and assume that there exists $\eta \in (0,\eta_0)$ such that 
\be\label{In}
\|  u_0 - B_{\al,\bt}(0;0,0) \|_{H^2(\R)} \leq \eta.
\ee
Then there exist $x_1(t), x_2(t)\in \R$ such that the solution $u(t)$ of the Cauchy problem for the mKdV equation \eqref{mKdV}, with initial data $u_0$, satisfies
\be\label{Fn1}
\sup_{t\in \R}\big\| u(t) - B_{\al,\bt}(t; x_1(t),x_2(t)) \big\|_{H^2(\R)}\leq A_0 \eta,
\ee
with
\be\label{Fn2}
\sup_{t\in \R}|x_1'(t)| +|x_2'(t)| \leq KA_0 \eta,
\ee
for some constant $K>0$.
\end{thm}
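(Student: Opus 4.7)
The plan is a standard continuity/bootstrap argument built on the conserved Lyapunov functional $\mathcal H$ of \eqref{H1}, the coercivity of Proposition \ref{PropOrtog}, and mass conservation. The scaling parameters $\al,\bt$ stay fixed throughout; only the shift parameters $x_1,x_2$ are modulated in time. The first step is to set up the modulation. For every $t$ at which $u(t)$ lies in a sufficiently small $H^2$-neighborhood of the family $\{B_{\al,\bt}(t;y_1,y_2)\}$, the implicit function theorem furnishes unique $C^1$ parameters $x_1(t),x_2(t)$, with $(x_1(0),x_2(0))$ near $(0,0)$, such that $z(t):=u(t)-B_{\al,\bt}(t;x_1(t),x_2(t))$ satisfies the two orthogonality conditions of \eqref{OrthoK}. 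The Jacobian for the IFT equals, to leading order, the Gram matrix $(\int B_iB_j)_{ij}$, which is positive definite by the linear independence of $B_1,B_2$ established in Section \ref{4}, and stays uniformly invertible along the orbit thanks to the periodic structure in $x_1-x_2$ exploited in Lemma \ref{Coerci}.

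Next comes the Lyapunov bootstrap. Assume a priori on $[0,T]$ that $\|z(t)\|_{H^2}\le A\eta$ for $A$ to be fixed, with $A\eta$ small. Since $\mathcal H[u](t)$ is conserved (Lemma \ref{dF10}), and since $\mathcal H[B_{\al,\bt}(t;x_1,x_2)]$ is independent of $(t,x_1,x_2)$ (each of $F,E,M$ being both time-invariant along breathers and translation-invariant), Lemma \ref{EE0} applied at $t=0$ and at general $t$ yields
\[
\tfrac12\mathcal Q[z(t)]+\mathcal N[z(t)] \;=\; \tfrac12\mathcal Q[z(0)]+\mathcal N[z(0)] \;\le\; C\eta^2.
\]
Because $z(t)$ satisfies \eqref{OrthoK} by construction, Proposition \ref{PropOrtog} gives
\[
\mu_0\|z(t)\|_{H^2}^2 \;\le\; C\eta^2 + C\|z(t)\|_{H^2}^3 + \mu_0^{-1}\Bigl(\int_\R z(t)\,B_{\al,\bt}(t;x_1(t),x_2(t))\,dx\Bigr)^2.
\]
The last term is handled by mass conservation: since $M[u](t)=M[u_0]$ while $M[B_{\al,\bt}]=4\bt$ is independent of $t$ and of the shifts (Lemma \ref{ME}), one has
\[
\int_\R z\,B_{\al,\bt}\,dx \;=\; \bigl(M[u_0]-M[B_{\al,\bt}]\bigr)-\tfrac12\|z\|_{L^2}^2 \;=\; O(\eta)+O(\|z\|_{H^2}^2).
\]
Squaring, plugging back, and using $\|z\|_{H^2}\le A\eta$ to absorb the cubic and quartic remainders into the left-hand side yields $\|z(t)\|_{H^2}\le A_0\eta$ for some $A_0$ independent of $A$, provided $\eta_0$ is small enough. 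Choosing $A>A_0$ closes the bootstrap, and time reversibility of \eqref{mKdV} extends the bound to all $t\in\R$, giving \eqref{Fn1}.

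For the modulation estimate \eqref{Fn2}, differentiate the two orthogonality identities $\int z(t) B_i(t;x_1(t),x_2(t))\,dx=0$ in time, substitute $u_t=-(u_{xx}+u^3)_x$ together with the analogous mKdV evolution for $B_{\al,\bt}(\cdot;x_1,x_2)$ at frozen shifts, and integrate by parts. One obtains a $2\times 2$ linear system for $(x_1'(t),x_2'(t))$ whose coefficient matrix is the Gram matrix $(\int B_iB_j)$ up to perturbations of size $O(\|z\|_{H^2})$, and whose right-hand side is $O(\|z(t)\|_{H^2})$. Inverting yields $|x_1'(t)|+|x_2'(t)|\le K\|z(t)\|_{H^2}\le KA_0\eta$.

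The main obstacle is not the algebraic bootstrap but ensuring that both the coercivity constant $\mu_0$ of Proposition \ref{PropOrtog} and the Gram determinant stay uniformly bounded below \emph{for all $t\in\R$}, despite the breather being only quasi-periodic in time. Uniformity is secured by the periodicity in the variable $\tilde x_1=(\delta-\ga)t+(x_1-x_2)$ already used in Lemma \ref{Coerci}. A second, more conceptual subtlety is that \eqref{Coer} only controls $\|z\|_{H^2}$ modulo the single direction $B$ itself; this residual degeneracy is precisely what mass conservation eliminates, corresponding to the generalized Weinstein identity $\partial_\bt M[B]=4>0$ of Corollary \ref{WCcor}, and is the structural reason why no modulation in the scaling parameters is needed.
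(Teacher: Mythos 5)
Your proposal is correct and follows essentially the same route as the paper: modulation of the two translation parameters via the implicit function theorem to impose orthogonality to $B_1,B_2$, conservation of $\mathcal H$ combined with the expansion of Lemma \ref{EE0} and the coercivity of Proposition \ref{PropOrtog}, and mass conservation to control the residual direction $\int zB$, all closed by a continuity/bootstrap argument (the paper phrases it via a maximal time $T^*$, which is equivalent to your a priori assumption on $[0,T]$). The derivation of \eqref{Fn2} by differentiating the orthogonality identities is likewise the paper's argument inside its modulation lemma.
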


\begin{rem}
The initial condition (\ref{In}) can be replaced by any initial breather profile of the form $\tilde B := B_{\al,\bt}(t_0; x_1^0, x_2^0)$, with $t_0, x_1^0, x_2^0 \in \R$, thanks to the invariance of the equation under translations in time and space. In addition, a similar result is available for the \emph{negative} breather $-B_{\al,\bt}$, which is also a solution of (\ref{mKdV}).

\end{rem}

\begin{proof}[Proof of Theorem \ref{T1}]
\noindent
Let $u_0\in H^2(\R)$ satisfying (\ref{In}), and let $u\in C(\R; H^2(\R))$ be the associated solution of the Cauchy problem (\ref{mKdV}), with initial data $u(0)=u_0$. In what follows, we denote
\[
B =B_{\al,\bt},
\]
if no confusion arises.

\smallskip

We prove the theorem only for positive times, since the negative time case is completely analogous. From the continuity of the mKdV flow for $H^2(\R)$ data, there exists a time $T_0>0$ and continuous parameters $x_1(t), x_2(t)\in \R$, defined for all $t\in [0, T_0]$, and such that the solution $u(t)$ of the Cauchy problem for the mKdV equation (\ref{mKdV}), with initial data $u_0$, satisfies
\be\label{F0}
\sup_{t\in [0, T_0]}\big\| u(t) - B(t; x_1(t),x_2(t)) \big\|_{H^2(\R)}\leq 2 \eta.
\ee
The idea is to prove that $T_0 =+\infty$. In order to do this, let $K^*>2$ be a constant, to be fixed later. Let us suppose, by contradiction, that the \emph{maximal time of stability} $T^*$, namely
\begin{align}\label{Te}
T^* &:=   \sup \Big\{ T>0 \, \big| \, \hbox{ for all } t\in [0, T], \, \hbox{ there exist } \tilde x_1(t), \tilde x_2(t) \in \R  \hbox{ such that } \nonu \\
&  \qquad \qquad \sup_{t\in [0, T]}\big\| u(t) - B(t; \tilde x_1(t), \tilde x_2(t)) \big\|_{H^2(\R)}\leq K^* \eta \Big\},
\end{align}
is finite. It is clear from (\ref{F0}) that $T^*$ is a well-defined quantity. Our idea is to find a suitable contradiction to the assumption $T^*<+\infty.$

\medskip

By taking $\delta_0$ smaller, if necessary, we can apply a well known theory of modulation for the solution $u(t)$.

\begin{lem}[Modulation]\label{Mod} There exists $\eta_0>0$ such that, for all $\eta\in (0, \eta_0)$, the following holds. There exist $C^1$ functions $x_1(t)$, $x_2(t) \in \R$, defined for all $t\in [0, T^*]$, and such that 
\be\label{z}
z(t) := u(t) - {B} (t), \quad B(t,x) := B_{\al,\bt} (t,x; x_1(t),x_2(t)) 
\ee
satisfies, for $t\in [0, T^*]$,
\be\label{Ortho}
\int_\R B_1(t; x_1(t),x_2(t)) z(t) = \int_\R B_2(t; x_1(t),x_2(t)) z(t)=0.
\ee
Moreover, one has
\be\label{apriori}
\|z(t)\|_{H^2(\R)} + |x_1'(t)| + |x_2'(t)| \leq K K^* \eta, \quad \|z(0)\|_{H^2(\R)} \leq K\eta,
\ee
for some constant $K>0$, independent of $K^*$. 
\end{lem}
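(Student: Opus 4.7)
The plan is to construct $x_1(t), x_2(t)$ by applying the implicit function theorem at each fixed time, and then to promote the pointwise construction to a $C^1$ curve via an ODE argument based on differentiating the orthogonality conditions in time. The standing hypothesis (\ref{Te}) that gives $\|u(t) - B_{\al,\bt}(t; \tilde x_1(t), \tilde x_2(t))\|_{H^2} \leq K^*\eta$ for \emph{some} parameters $\tilde x_1, \tilde x_2$ will serve as the starting guess at each time.

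For a fixed $t \in [0,T^*]$, define
\[
\Phi_i(t, x_1, x_2) := \int_\R B_i(t; x_1, x_2)\bigl[u(t,x) - B_{\al,\bt}(t,x; x_1, x_2)\bigr]dx, \qquad i=1,2,
\]
so that the two orthogonality conditions (\ref{Ortho}) become $\Phi_1 = \Phi_2 = 0$. A direct computation yields $\partial_{x_j} \Phi_i = \int (\partial_{x_j} B_i)(u-B) - \int B_i B_j$, which at $z = u - B = 0$ reduces to the negative Gram matrix $-G(t; x_1, x_2) := -\bigl[\int B_i B_j\bigr]_{i,j}$. Since $B_1$ and $B_2$ are linearly independent (as noted in the paper before Lemma \ref{B1B2}), $G$ is positive definite, and by translation invariance $G$ depends only on the phase $\tilde x_1 := (\delta - \ga)t + (x_1 - x_2)$ (modulo an overall shift), which is periodic; hence $\det G$ is bounded away from zero uniformly in $t$ and in $(x_1,x_2)$. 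This gives quantitative invertibility, and the implicit function theorem produces, for each $t$, unique $x_1(t), x_2(t)$ near the initial guess such that $\Phi_1 = \Phi_2 = 0$ and $\|z(t)\|_{H^2} \leq CK^*\eta$. At $t = 0$ the initial guess is $(0,0)$ and (\ref{In}) gives $\|z(0)\|_{H^2} \leq K\eta$.

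For $C^1$ regularity and the bound on $x_j'$, I would differentiate (\ref{Ortho}) in $t$. Using that $u$ solves (\ref{mKdV}) and that $B_{\al,\bt}(t,x; x_1,x_2)$ solves mKdV when $x_1,x_2$ are held fixed, a short computation gives
\[
z_t = -\partial_x\bigl[z_{xx} + 3B^2 z + 3 B z^2 + z^3\bigr] - x_1'(t) B_1 - x_2'(t) B_2.
\]
Plugging this into $\frac{d}{dt}\int B_i\, z = 0$ (and integrating by parts to move $\partial_x$ off $B_i$) produces the linear system
\[
\sum_{j=1}^2 x_j'(t) \int_\R B_i B_j \;=\; \int_\R (\partial_t B_i)\, z \;+\; \int_\R (B_i)_x \bigl[z_{xx} + 3B^2 z + 3B z^2 + z^3\bigr],
\]
for $i=1,2$. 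The matrix on the left is the Gram matrix $G$, uniformly invertible by the previous paragraph. The right-hand side is linear-plus-higher-order in $z$ and, using the Schwartz decay of $B_i, (B_i)_x, \partial_t B_i$ and $H^2$ control of $z$, is bounded by $CK^*\eta$. This yields (\ref{apriori}) and, via the implicit function theorem in the time variable, gives $x_1, x_2 \in C^1([0,T^*])$.

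The main obstacle, and the point that has to be checked most carefully, is the \emph{uniform} invertibility of the Gram matrix $G(t; x_1, x_2)$ over the long time interval $[0,T^*]$: the apparent three-parameter dependence has to be reduced, by translation invariance in $x$, to dependence on a single $2\pi/\alpha$-periodic phase $\tilde x_1$, at which point continuity of $\det G$ in $\tilde x_1$ and periodicity give a positive lower bound. A secondary technical point is ensuring that the implicit function theorem is applied with constants that are uniform in $t$, so that the smallness of $\eta$ (not of $K^*\eta$) is what is actually required to produce $(x_1(t), x_2(t))$; this is standard once the Gram matrix bound is in place, but is the step that one must verify to be sure the choice of $K^*$ and $\eta_0$ in (\ref{Te}) is self-consistent.
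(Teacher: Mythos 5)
Your proposal follows essentially the same route as the paper: the implicit function theorem applied to the functionals $J_j(u,x_1,x_2)=\int_\R (u-B)B_j$, whose Jacobian is the negative Gram matrix of $B_1,B_2$ (invertible by Cauchy--Schwarz since $B_1,B_2$ are linearly independent), followed by time-differentiation of the orthogonality conditions to obtain the linear system for $x_1',x_2'$ and the bound (\ref{apriori}). You actually supply more detail than the paper on two points it leaves implicit --- the uniform-in-$t$ lower bound on $\det G$ via reduction to a periodic phase, and the explicit equation for $z_t$ --- both of which are correct.
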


\begin{proof}
The proof of this result is a classical application of the Implicit Function Theorem.  Let
\[
J_j(u(t), x_1,x_2) := \int_\R (u(t,x) - B (t,x; x_1,x_2)) B_j(t,x; x_1,x_2)dx, \quad j=1,2.
\]
It is clear that $J_j(B(t ;x_1,x_2),x_1,x_2) \equiv 0,$ for all $x_1,x_2\in \R$. On the other hand, one has for $j,k=1,2$,
\[
\partial_{x_k} J_j(u(t), x_1,x_2)\Big|_{(B(t),0,0)} =  - \int_\R B_k (t,x; 0,0) B_j(t,x; 0,0)dx.
\]
Let $J$ be the $2\times 2$ matrix with components $ J_{j,k} := (\partial_{x_k}J_j)_{j,k=1,2}$. From the identity above, one has
\[
\det J = -\Big[ \int_\R B_1^2  \int_\R B_2^2 -  (\int_\R B_1B_2)^2\Big](t;0,0),
\]
which is different from zero from the Cauchy-Schwarz inequality and the fact that $B_1$ and $B_2$ are not parallel for all time. Therefore, in a small $H^2$ neighborhood of $B(t; 0,0)$, $t\in [0,T^*]$ (given by the definition of (\ref{Te})), it is possible to write the decomposition (\ref{z})-(\ref{Ortho}).

\medskip

Now we look at the bounds (\ref{apriori}). The first bounds are consequence of the decomposition itself and the equations satisfied by the derivatives of the scaling parameters, after taking time derivative in (\ref{Ortho}) and using that $\det J\neq 0$. The last bound in (\ref{apriori}) is consequence of (\ref{In}).
\end{proof}

Now, we apply Lemma \ref{EE0} to the function $u(t)$. Since $z(t)$ defined by (\ref{z}) is small, we get from (\ref{EE}) and Corollary \ref{Cor32}:
\be\label{Hut}
\mathcal H[u](t) = \mathcal H[B](t) + \frac 12 \mathcal Q[z](t) + N[z](t).
\ee 
Note that $|N[z](t)|\leq K \|z(t)\|_{H^1(\R)}^3$. On the other hand, by the translation invariance in space,
\[
 \mathcal H[B](t) =\mathcal H[B](0) =\hbox{constant}.
\]
Indeed, from (\ref{t0x0}), we have
\[
B(t,x; x_1(t), x_2(t)) = B(t-t_0(t), x-x_0(t)),
\]
for some specific $t_0,x_0$. Since $\mathcal H$ involves integration in space of polynomial functions on $B, B_x$ and $B_{xx}$, we have 
\[
 \mathcal H[B(t, \cdot ; x_1(t),x_2(t))] = \mathcal H[B(t -t_0(t), \cdot -x_0(t); 0,0)] =   \mathcal H[B(t-t_0(t), \cdot ; 0,0)].
\] 
Finally, $ \mathcal H[B(t-t_0(t), \cdot ; 0,0)] = \mathcal H[B(\cdot , \cdot ; 0,0)] (t-t_0(t))$. Taking time derivative,
\[
 \partial_t \mathcal H[B(t, \cdot ; x_1(t),x_2(t))] =  \mathcal H'[B(\cdot , \cdot ; 0,0)] (t-t_0(t)) \times (1-t_0'(t)) \equiv 0,
\]
hence $\mathcal H[B]$ is constant in time. Now we compare (\ref{Hut}) at times $t=0$ and $t\leq T^*$. From Lemma \ref{dF10} and  (\ref{H1}) we have
\[
\mathcal Q[z](t) \leq  \mathcal Q[z](0) + K\|z(t)\|_{H^2(\R)}^3 +K\|z(0)\|_{H^2(\R)}^3 \leq K\|z(0)\|_{H^2(\R)}^2+K\|z(t)\|_{H^2(\R)}^3. 
\]
Additionally, from (\ref{OrthoK})-(\ref{Coer}) applied this time to the time-dependent function $z(t)$, which satisfies (\ref{Ortho}), we get
\begin{align}
\|z(t)\|_{H^2(\R)}^2&  \leq     K\|z(0)\|_{H^2(\R)}^2 + K\|z(t)\|_{H^2(\R)}^3 + K\abs{\int_\R B(t) z(t)}^2\nonu\\
& \leq  K\eta^2 + K(K^*)^3 \eta^3 +K\abs{\int_\R B(t) z(t)}^2. \label{Map}
\end{align}

\noindent
{\bf Conclusion of the proof.} Using the conservation of mass (\ref{M1}), we have, after expanding $u=B+z$,
\begin{align*}
\abs{\int_\R  B(t)z(t)} &  \leq  K\abs{\int_\R  B(0)z(0)} +K\|z(0)\|_{H^2(\R)}^2+ K\|z(t)\|_{H^2(\R)}^2 \nonu \\
& \leq    K (\eta + (K^*)^2 \eta^2), \quad \hbox{ for each $t\in [0, T^*]$.}
\end{align*}
Replacing this last identity in (\ref{Map}), we get
\[
\|z(t)\|_{H^2(\R)}^2 \leq  K \eta^2 ( 1+ (K^*)^2 \eta^3) \leq \frac 12 (K^*)^2 \eta^2 ,
\]
by taking $K^*$ large enough. This last fact contradicts the definition of $T^*$ and therefore the stability property (\ref{Fn1}) holds true. Finally, (\ref{Fn2}) is a consequence of (\ref{apriori}).
\end{proof}

\bigskip


\begin{thebibliography}{99}


\medskip

\bibitem{AC} M. Ablowitz and P. Clarkson, \emph{Solitons, nonlinear evolution equations and inverse scattering}, London Mathematical Society Lecture Note Series, 149. Cambridge University Press, Cambridge, 1991. 

\bibitem{Ale} M.A. Alejo, \emph{Geometric Breathers of the mKdV Equation}, Acta Appl. Math. DOI 10.1007/s10440-012-9698-y (to appear).


\bibitem{Ale1} M.A. Alejo, \emph{On the ill-posedness of the Gardner equation}, preprint.

\bibitem{AGV} M.A. Alejo, C. Gorria and L. Vega, \emph{Discrete conservation laws and the convergence of long time simulations of the mKdV equation}, preprint. 

\bibitem{AM} M.A. Alejo and C. Mu\~noz, \emph{Nonlinear stability of sine Gordon breathers}, in preparation.

\bibitem{AMV} M.A. Alejo, C. Mu\~noz, and L. Vega, \emph{The Gardner equation and the $L^2$-stability of the $N$-soliton solution of the Korteweg-de Vries equation}, to appear in Transactions of the AMS.

\bibitem{Au} S. Aubry, \emph{Breathers in nonlinear lattices: Existence, linear stability and quantization}, Physica D no. 103 (1997), 201--250. 

\bibitem{Benj} T.B. Benjamin, \emph{The stability of solitary waves}, Proc. Roy. Soc. London A \textbf{328}, (1972) 153--183. 

\bibitem{BMW} B. Birnir, H.P. McKean, and A. Weinstein, \emph{The rigidity of sine-Gordon breathers}, Comm. Pure Appl. Math. \textbf{47}, 1043--1051 (1994).

\bibitem{BSS} J.L. Bona, P. Souganidis, and W. Strauss,  \emph{Stability and instability of solitary waves of Korteweg-de Vries type}, Proc. Roy. Soc. London \textbf{411} (1987), 395--412.

\bibitem{CKSTT} J. Colliander, M. Keel, G. Staffilani, H. Takaoka, and T. Tao,  \emph{Sharp global well-posedness for KdV and modified KdV on $\mathbb R$ and $\mathbb T$}, J. Amer. Math. Soc. \textbf{16} (2003), no. 3, 705--749 (electronic).

\bibitem{D} J. Denzler, \emph{Nonpersistence of breather families for the perturbed Sine-Gordon equation}, Commun. Math. Phys. \textbf{158}, 397--430 (1993).

\bibitem{Ga} C.S. Gardner, M.D. Kruskal, and R. Miura, \emph{Korteweg-de Vries equation and generalizations. II. Existence of conservation laws and constants of motion}, J. Math. Phys. \textbf{9}, no. 8 (1968), 1204--1209.

\bibitem{Gr} L. Greenberg, \emph{An oscillation method for fourth order, self-adjoint, two-point boundary value problems with nonlinear eigenvalues}, SIAM J. Math. Anal. \textbf{22} (1991), no. 4, 1021--1042.  

\bibitem{HIROTA1} R. Hirota, \emph{Exact solution of the \emph{modified} Korteweg-de Vries equation for multiple collisions of solitons}, J. Phys. Soc. Japan, \textbf{33}, no. 5 (1972) 1456--1458.

\bibitem{HPZ} J. Holmer, G. Perelman, and M. Zworski, \emph{Effective dynamics of double solitons for perturbed mKdV}, to appear in Comm. Math. Phys. 

\bibitem{Kap} T. Kapitula, \emph{On the stability of N--solitons in integrable systems}, Nonlinearity, \textbf{20} (2007) 879--907.

\bibitem{KPV} C.E. Kenig, G. Ponce, and L. Vega, \emph{Well-posedness and scattering results for the generalized Korteweg--de Vries equation via the contraction principle}, Comm. Pure Appl. Math. \textbf{46}, (1993) 527--620. 

\bibitem{KPV2} C.E. Kenig, G. Ponce, and L. Vega, \emph{On the ill-posedness of some canonical dispersive equations}, Duke Math. J. \textbf{106} (2001), no. 3, 617--633.

\bibitem{La} G.L. Lamb, \emph{Elements of Soliton Theory}, Pure Appl. Math., Wiley, New York, 1980.

\bibitem{LAX1} P.D. Lax, \emph{Integrals of nonlinear equations of evolution and solitary waves}, Comm. Pure Appl. Math. \textbf{21}, (1968) 467--490.

\bibitem{MS} J.H. Maddocks and R.L. Sachs, \emph{On the stability of KdV multi-solitons}, Comm. Pure Appl. Math. \textbf{46}, 867--901 (1993).

\bibitem{MMarma} Y. Martel and F. Merle, \emph{Asymptotic stability of solitons for subcritical generalized KdV equations}, Arch. Ration. Mech. Anal. \textbf{157} (2001), no. 3, 219--254. 

\bibitem{MMnon} Y. Martel and F. Merle, \emph{Asymptotic stability of solitons of the subcritical gKdV equations revisited}, Nonlinearity \textbf{18} (2005) 55--80.

\bibitem{MMcol2} Y. Martel and F. Merle, \emph{Stability of two soliton collision for nonintegrable gKdV equations}, Comm. Math. Phys. \textbf{286} (2009), 39--79.

\bibitem{MMan} Y. Martel and F. Merle, \emph{Asymptotic stability of solitons of the gKdV equations with general nonlinearity}, Math. Ann. \textbf{341} (2008), no. 2, 391--427.

\bibitem{MMT} Y. Martel, F. Merle, and T.P. Tsai, \emph{Stability and asymptotic stability in the energy space of the sum of $N$ solitons for subcritical gKdV equations}, Comm. Math. Phys. \textbf{231} (2002) 347--373.

\bibitem{MV} F. Merle and L. Vega, \emph{$L^2$ stability of solitons for KdV equation},  Int. Math. Res. Not.  2003,  no. 13, 735--753.

\bibitem{NL} A. Neves and O. Lopes, \emph{Orbital stability of double solitons for the Benjamin-Ono equation}, Commun. Math. Phys. \textbf{262} (2006) 757--791.

\bibitem{OW} K. Ohkuma and M. Wadati, \emph{Multi-pole solutions of the modified Korteweg-de Vries equation}, Journ. Phys. Soc. Japan \textbf{51} no.6 (1982) 2029--2035.

\bibitem{PW} R.L. Pego and M.I. Weinstein, \emph{Asymptotic stability of solitary waves}, Commun. Math. Phys. \textbf{164}, 305--349 (1994).

\bibitem{PeGr} D. Pelinovsky and R. Grimshaw, \textit{ Structural transformation of eigenvalues for a perturbed algebraic soliton potential,} Phys. Lett. A 229 (1997), no.3, 165--172.

\bibitem{Sch} P.C. Schuur, \emph{Asymptotic analysis of soliton problems. An inverse scattering approach}, Lecture Notes in Mathematics, 1232. Springer-Verlag, Berlin, 1986. viii+180 pp.

\bibitem{SW} A. Soffer and M.I. Weinstein, \emph{Resonances, radiation damping and instability in Hamiltonian nonlinear wave equations}, Invent. Math. \textbf{136} (1999), no. 1, 9--74.

\bibitem{W1} M. Wadati, \textit{The modified Korteweg-de Vries Equation}, J. Phys. Soc. Japan, \textbf{34}, no.5, (1973), 1289--1296.

\bibitem{We1} M.I. Weinstein, \emph{Modulational stability of ground states of nonlinear Schr\"odinger equations},  SIAM J. Math. Anal.  \textbf{16}  (1985),  no. 3, 472--491.

\bibitem{We2} M.I. Weinstein, \emph{Lyapunov stability of ground states of nonlinear dispersive evolution equations}, Comm. Pure Appl. Math. \textbf{39}, (1986) 51--68.

\end{thebibliography}
\end{document}